\documentclass[preprint,12pt]{elsarticle} 
\biboptions{numbers,sort&compress}
\usepackage[colorlinks=true, linkcolor=blue, citecolor=blue, urlcolor=blue]{hyperref}
\usepackage{algorithm}
\usepackage{algpseudocode} 
\usepackage{amsmath, amssymb}
\usepackage{amsthm}
\usepackage{graphicx}
\usepackage{subfig}
\usepackage{float}
\usepackage{tikz}
\usepackage{pgfplots}
\usepackage{caption}
\usepackage{url}
\usepackage{booktabs}
\usepackage{enumitem}
\usepackage{physics}

\newtheorem{theorem}{Theorem}[section]       % Theorem 1.1, 1.2, etc.
\newtheorem{lemma}{Lemma}[section]           % Lemma 1.1, 1.2, etc.
\newtheorem{proposition}{Proposition}[section]
\newtheorem{corollary}{Corollary}[section]

\theoremstyle{definition}
\newtheorem{definition}{Definition}[section]

\theoremstyle{remark}
\newtheorem{remark}{Remark}[section]

\everydisplay\expandafter{\the\everydisplay \small}
\begin{document}
\begin{frontmatter} % <--- THIS IS THE KEY FIX
    \date{\today}
    \title{Mathematical analysis for a doubly degenerate parabolic equation: Application to the Richards equation}
    
    \author[aff1]{Abderrahmane Benfanich}
    \ead{abenf099@uottawa.ca}
    
    \author[aff1]{Yves Bourgault}
    \ead{ybourg@uottawa.ca}
    
    \author[aff1,aff2]{Abdelaziz Beljadid}
    \corref{cor1}
    \ead{Abdelaziz.BELJADID@um6p.ma}
    \ead{abeljadi@uottawa.ca}
    
    \address[aff1]{Department of Mathematics and Statistics, University of Ottawa, Canada}
    \address[aff2]{Mohammed VI Polytechnic University, Morocco}
    
    \cortext[cor1]{Corresponding author.}
    
    \begin{abstract}
    This paper presents a mathematical analysis of a doubly degenerate parabolic equation and its application to the Richards equation using a bounded auxiliary variable. We establish the existence of weak solutions using semi-implicit time discretization combined with maximal monotone operator theory. The analysis is conducted within weighted Sobolev spaces, allowing for a rigorous treatment of the equation’s strict degeneracy and strong nonlinearities. A key feature of this study is the derivation of convergence results without imposing strictly positive lower bounds on the diffusivity or requiring high regularity of the solution. Furthermore, we prove that the Richards equation using the introduced auxiliary variable preserves the physical bounds of the saturation and demonstrate the unconditional linear convergence of the L-scheme linearization to the semi-discrete solution.
    \end{abstract}
    
    \begin{keyword} % Use this environment for keywords
    Richards equation \sep doubly degenerate parabolic equations \sep Rothe's method \sep weighted Sobolev spaces \sep existence of weak solutions \sep L-scheme.
    \end{keyword}

\end{frontmatter}
% \maketitle
% \tableofcontents
% \newpage

\section{Introduction}

Modeling fluid flow in porous media constitutes a fundamental challenge in fields ranging from hydrology and agricultural engineering to environmental science. The primary mathematical framework for describing water movement in the unsaturated zone is the \textit{Richards equation} \cite{Richards1931}. Derived by combining the principle of mass conservation with the Darcy-Buckingham law, this equation captures the complex nonlinear relationship between soil water content and pressure head. The resulting model is a degenerate parabolic partial differential equation, which presents significant analytical difficulties due to the strong nonlinearities in the hydraulic conductivity and the potential degeneracy of the diffusivity term in dry or fully saturated regimes.

The mathematical analysis of the Richards equation has been the subject of extensive research. Fundamental results regarding the existence of weak solutions for general quasilinear elliptic-parabolic equations were established by Alt and Luckhaus \cite{AltLuckhaus1983} using compactness methods and Kirchhoff transformations. The regularity of such solutions was further investigated by DiBenedetto \cite{DiBenedetto1993}, who established the Hölder continuity of the saturation. Regarding uniqueness, standard techniques often fail due to the lack of regularity; however, $L^1$-contraction principles developed by Otto \cite{Otto1996} and refined by Carrillo \cite{Carrillo1999} have provided a robust framework for proving uniqueness in the context of bounded domains.

Despite these theoretical advances, obtaining efficient and reliable numerical solutions remains a difficult task. As highlighted by List and Radu \cite{ListRadu2016}, the core challenge lies in the linearization of the discrete equations. The standard {Newton method} (or Newton-Raphson) is frequently employed due to its quadratic convergence rate. However, its convergence is only local; it often fails or suffers from severe time-step restrictions in regimes with low saturation or sharp wetting fronts where the derivative of the nonlinearity vanishes or explodes \cite{ListRadu2016, Jones2001}. The {Picard iteration}, while globally convergent under certain conditions, is often prohibitively slow. 

To address the numerical difficulties arising from the degeneracy, various alternative strategies have been proposed. One prominent class of methods is the {Primary Variable Switching (PVS)} approach. In this framework, the numerical scheme dynamically selects either the pressure head or the saturation as the primary unknown, depending on the local flow regime \cite{Diersch1999, Forsyth1995, Wu2001}. Typically implemented within Newton-Raphson solvers, PVS constructs the Jacobian matrix based on derivatives with respect to the currently active variable \cite{Brunner2012, Krabbenhoft2007}. While applicable to a wide range of problems, PVS approaches often suffer from non-smooth transitions between variables. As noted in \cite{Krabbenhoft2007} and \cite{Zha2017}, these discontinuities can yield physically unrealistic solutions, particularly at the sharp interface between saturated and unsaturated zones. Although smoothing techniques and refined switching criteria have been developed to mitigate these oscillations \cite{Maina2017, Kees2002}, their effectiveness remains highly problem-dependent (see \cite{Zha2019} for a review).

A distinct strategy for handling the nonsmooth nature of the Richards equation is regularization. This approach replaces the degenerate constitutive relationships with smoothed, non-degenerate approximations controlled by a regularization parameter. Literature in this area is extensive, ranging from parabolic regularizations for dry-region unsaturated flow \cite{Schweizer2007} to schemes specifically tailored for doubly degenerate equations \cite{Pop2011}. More recently, adaptive regularization frameworks guided by a posteriori error estimators have been proposed \cite{Fevotte2024}. While regularization significantly improves the robustness of numerical solvers, it inherently modifies the underlying governing equation, introducing an artificial modeling parameter that must be carefully tuned to balance physical accuracy with numerical stability.

Returning to linearization schemes, Pop et al. \cite{Pop2004} and Slodička \cite{Slodicka2002} independently proposed the L-scheme to bridge the gap between robustness and efficiency. This method is a stabilized fixed-point iteration; by adding a stabilization term, the L-scheme guarantees unconditional stability and global convergence, making it particularly attractive for degenerate problems. However, a critical examination of the literature reveals that the theoretical convergence analysis for these schemes often relies on assumptions that contradict the physical reality of the problem. Many proofs assume that the diffusivity is strictly bounded from below or that the exact solution possesses high regularity, which is not the case for the solution of this type of doubly degenerate parabolic equation.

Furthermore, recent efforts have focused on addressing the convergence speed of the L-scheme. The standard L-scheme requires the stabilization parameter $L$ to be greater than or equal to the Lipschitz constant of the nonlinearity ($L \ge \sup |b'|$) \cite{Pop2004,Slodicka2002}. In the degenerate Richards equation, where the derivative of the relationship between saturation and capillary pressure can become very large, this constraint leads to excessive stabilization and slow convergence. To mitigate this, Mitra and Pop \cite{Mitra2019} introduced the Modified L-scheme. Further enhancements include dynamic strategies, where the regularization parameter is adapted during the iteration process. For instance, dynamic regularization strategies \cite{Fevotte2024} adjust the smoothing parameter based on error reduction, while Anderson acceleration techniques have been applied to the L-scheme to improve its convergence rate in stiff regimes.

Semi-implicit IMEX methodologies offer a powerful non-iterative alternative to expensive fully implicit schemes. By linearizing terms like hydraulic conductivity using extrapolation or Taylor expansion, they achieve accuracy and efficiency comparable to Newton's method while demonstrating superior robustness in handling degenerate relationships \cite{Kamil2024, Paniconi1991, Keita2021}. However, despite these strengths, these schemes present specific drawbacks, including conditional stability that may necessitate small time steps when gravity dominates, and a sensitivity to free parameters that often requires mass lumping or regularization to ensure physical realism \cite{Celia1990, Kamil2024, Keita2021}. Crucially, a significant theoretical gap persists alongside these numerical challenges: the literature lacks rigorous convergence proofs that account for the strict degeneracy of the Richards equation without relying on unphysical regularity assumptions.

The theoretical framework underpinning our analysis to close this gap is the method of discretization in time, classically known as {Rothe's method} \cite{Rektorys1982, Kacur1990}. Unlike the standard Method of Lines which discretizes space first to obtain a system of ordinary differential equations, Rothe's method discretizes the time variable first, approximating the evolution equation by a sequence of stationary elliptic boundary value problems at each time step. This technique serves as both a numerical scheme and a constructive proof method for establishing the existence and uniqueness of solutions to nonlinear parabolic problems. Its primary advantage in the context of degenerate equations lies in its reliance on compactness arguments rather than strong regularity estimates. By constructing piecewise constant and piecewise linear interpolants in time (Rothe functions) and establishing uniform a priori estimates, one can extract convergent subsequences that satisfy the continuous problem in the weak sense, even when the solution lacks the smoothness required for standard error analysis.

In this work, we address these challenges by introducing a specific formulation that handles the double degeneracy within this framework. Applying the transformation proposed in \cite{benfanich2025}, we recast the Richards equation as a doubly degenerate parabolic equation using a {bounded auxiliary variable}. Our analysis proceeds in several steps using Rothe's method:
\begin{enumerate}
    \item We discretize the equation in time using a semi-implicit backward Euler scheme.
    \item We prove that the resulting degenerate elliptic problems admit a unique solution in a {weighted Sobolev space} \cite{Cavalheiro2008WeightedSobolev}, utilizing maximal monotone operator theory \cite{brezis1973ope} to handle the vanishing conductivity explicitly.
    \item We prove that the {L-scheme} converges linearly and unconditionally to the solution of these semi-discrete problems, without imposing any hypothesis on the type of degeneracy.
    \item Using compactness arguments, we pass to the limit to demonstrate the existence of a weak solution for the continuous equation. This simultaneously establishes the convergence of the semi-implicit time discretization without additional regularity assumptions.
    \item Finally, we prove a maximum principle for the continuous equation, ensuring that the exact solution respects the physical bounds.
\end{enumerate}

\subsection{Presentation of the Model}
The present study is a follow-up of our recent work \cite{benfanich2025} where we introduced a new bounded auxiliary variable to solve the Richards equation. We will focus on the mathematical analysis of the obtained doubly degenerate equation.

Let $\Omega \subset \mathbb{R}^d$, with $d \in \{1, 2, 3\}$, be a bounded open set representing the domain of a porous medium, such that the boundary $\partial \Omega$ is Lipschitz. Let $T > 0$, and $I = (0, T)$ denote the time interval. The classical formulation of the Richards equations is given by
\begin{equation}
\label{eq:richards}
\frac{\partial \theta}{\partial t} + \nabla \cdot \boldsymbol{q} = 0, \quad \text{in } \Omega \times I,
\end{equation}
where $\theta = \theta(\boldsymbol{x}, t)$ is the volumetric water content and the water flux $\boldsymbol{q}$ is described by the Darcy-Buckingham law \citep{Richards1931}:
\begin{equation}
\label{eq:darcy}
\boldsymbol{q} = -K_s(\boldsymbol{x}) K_r(\boldsymbol{x}, S) \nabla (\Psi(\boldsymbol{x}, S) + z).
\end{equation}
Here, $K_s$ is the saturated hydraulic conductivity, $K_r$ is the relative permeability, and $\Psi$ represents the pressure head (or capillary suction). The relationship between the pressure head $\Psi$ and the saturation $S$ is determined by empirical constitutive models. Additionally, $\boldsymbol{x} = (x, z)^T$ denotes the spatial coordinates with the vertical coordinate $z$ oriented positively upward. Let $\theta_r$ and $\theta_s$ denote the residual and saturated water contents, respectively, such that $\theta_r \leq \theta \leq \theta_s$. We introduce the effective saturation $S$, defined as:
\[
S = \frac{\theta - \theta_r}{\theta_s - \theta_r}.
\]
Consequently, we have $0 \leq S \leq 1$. Substituting the physical variables with the normalized saturation leads to the saturation based equation. To simplify the notation in the subsequent mathematical analysis, we will henceforth denote the effective saturation $S$ by the variable $\theta$, where:
\begin{equation*}
0 \leq \theta(\boldsymbol{x}, t) \leq 1, \quad \forall (\boldsymbol{x}, t) \in \Omega \times I.
\end{equation*}
\subsection{Notation and Definitions of Spaces}

In this section, we collect the notations, definitions of functional spaces, and norms used throughout the paper.

\subsection*{General Notation}
\begin{itemize}
    \item $\Omega \subset \mathbb{R}^d$: A bounded open set with a Lipschitz boundary, where $d \in \{1, 2, 3\}$.
    \item $T > 0$: The final time.
    \item $I = (0, T)$: The time interval.
    \item $\textbf{x}  \in \Omega$: Spatial coordinate.
    \item $\langle f, v \rangle_{\mathcal{H}' \times \mathcal{H}}$: The duality pairing between a functional $f \in \mathcal{H}'$ and an element $v \in \mathcal{H}$.
    \item Let $g:\mathbb{R} \to \mathbb{R}$. We define the induced operator on function spaces by $g(v)(x) = g(v(x))$, for $v$ in some real function space.
\end{itemize}

\subsection*{Functional Spaces and Norms}
In this section, we introduce the functional setting and notations used throughout the analysis. We adopt the following definitions:

    \begin{itemize}
        \item $C^k(\bar \Omega)$, where $k \in \mathbb{N} \cup \{0,\infty\}$: The space of functions possessing continuous partial derivatives up to order $k$ in $\Omega$. $C^0(\bar \Omega)$ denotes the space of continuous functions.
        \item $C_c^\infty(\Omega)$: The space of infinitely differentiable functions with compact support contained in $\Omega$.
        \item For a real Banach space $\mathcal C$, we denote by $\mathcal C'$ the dual space of $\mathcal C$, defined by the set of continuous linear forms.
        \item $L^2(\Omega)$: The space of square-integrable functions with inner product $$(u,v) = \int_\Omega uv \, d\textbf{x}$$ and norm $$\|u\| = {(u,u)^{\frac 12}}.$$
        \item $\mathcal{H} = H_0^1(\Omega)$: The Sobolev space of functions with square-integrable derivatives vanishing on the boundary. By the Poincaré inequality, we equip this space with the inner product $$(u,v)_1 =  \int_\Omega \nabla u \cdot \nabla v \;d\textbf{x}$$ and the norm $$\|u\|_1 = \left( \int_\Omega |\nabla u|^2 \;d\textbf{x} \right)^{1/2}.$$
        \item $\mathcal{H}' = H^{-1}(\Omega)$: The dual space of $\mathcal{H}$ with the norm defined as $$\|f\|_{-1} = \sup_{v \in \mathcal{H}, \|v\|_1 \neq 0} \frac{|\langle f, v \rangle_{\mathcal{H}' \times \mathcal{H}}|}{\|v\|_1}.$$
        \item $X = L^2(I; \mathcal{H})$: The Bochner space of square-integrable functions from $I$ to $\mathcal{H}$ with the norm
        $$
        \|u\|_X = \left( \int_I \|u(t)\|_{\mathcal{H}}^2 \;dt \right)^{1/2}.
        $$
        \item $X' = L^2(I; \mathcal{H}')$: The dual space of $X$ with the norm
        $$
        \|u\|_{X'} = \left( \int_I \|u(t)\|_{\mathcal{H}'}^2 \;dt \right)^{1/2}.
        $$
        \item $C_w(I; L^2(\Omega))$: The space of weakly continuous functions $u \in L^\infty(I; L^2(\Omega))$ such that for every $v \in L^2(\Omega)$, the mapping $t \mapsto (u(t), v)$ is continuous on $\bar I$.
    \end{itemize}

\subsection*{Operators and Convergence}
\begin{itemize}
    \item \textbf{Weak Convergence:} A sequence $\{u_n\}_{n\in \mathbb N}$ converges weakly to $u$ in a Banach space $\mathcal{C}$, denoted $u_n \rightharpoonup u$, iff 
    $$
    \ell(u_n) \to \ell(u), \quad \forall \ell \in \mathcal{C}^*.
    $$
    In particular, if $\mathcal{C}$ is a Hilbert space, weak convergence is equivalent to 
    $$
    (u_n,v)_{\mathcal{C}} \to (u,v)_{\mathcal{C}}, \quad \forall v \in \mathcal{C}.
    $$
    \item \textbf{Weak-$*$ Convergence:} A sequence $\{u_n\}_{n\in \mathbb N}$ converges weak-$*$ to $u$ in a dual space $\mathcal{C}'$, denoted $u_n \overset{*}{\rightharpoonup} u$, if and only if
    $$
    \langle u_n, x \rangle \to \langle u, x \rangle, \quad \forall x \in \mathcal{C}.
    $$
    In particular, for $\mathcal{C}' = L^\infty(I; L^2(\Omega))$ (the dual of the space $\mathcal{C} = L^1(I; L^2(\Omega))$), this is equivalent to
    $$
    \int_I (u_n(t), \varphi(t))_{L^2(\Omega)} \, dt \to \int_I (u(t), \varphi(t))_{L^2(\Omega)} \, dt, \quad \forall \varphi \in L^1(I; L^2(\Omega)).
    $$
    Note that if $\mathcal{C}$ is a reflexive Banach space (e.g., a Hilbert space), weak and weak-$*$ convergence are equivalent. By the Banach-Alaoglu Theorem, the closed unit ball in the dual space is weak-$*$ compact.
\end{itemize}
\subsection*{Important equalities and inequalities}
\begin{itemize}
    \item \textbf{Algebraic Identity:} For all $x,y \in \mathbb{R}$, the following identity holds:
    $$
    (x-y)x = \frac{1}{2} (x^2 - y^2 + (x-y)^2).
    $$
    \item \textbf{Young's inequality:} For all $x,y \in \mathbb{R}$ and for all $\delta > 0$, we have:
    $$
    |xy| \leq \frac{1}{p\delta^{\frac p2}}|x|^p + \frac{\delta^{\frac q2}}{q}|y|^q,
    $$
    for $p,q \geq 1$ and $\frac 1p+\frac 1q = 1$.
    
    \item \textbf{Cauchy-Schwarz inequality:} Let $f,g \in L^2(\Omega)$. Then:
    $$
    |(f,g)| \leq \|f\|\|g\|.
    $$
    
    \item \textbf{H\"older's inequality:} Let $f\in L^p(\Omega)$ and $g \in L^q(\Omega)$ with $p,q \geq 1$ such that $\frac{1}{p}+\frac{1}{q}=1$. Then:
    $$
    |(f,g)| \leq \|f\|_p\|g\|_q,
    $$
    where $\|f\|_p$ and $\|g\|_q$ denote the $L^p$ and $L^q$ norms, respectively.
    \item \textbf{Poincaré inequality:} There exists $C_P>0$ such that for all $v \in H^1_0(\Omega)$ we have 
    $$
    \|v\| \leq C_P \|\nabla v\|.
    $$
\end{itemize}
\section{Existence of Weak Solutions}
In this work, we consider a doubly degenerate parabolic equation that generalizes the $u$-formulation for the Richards equation originally introduced in \cite{benfanich2025} for homogeneous domains. Using the normalized variable $\theta$, the governing equation, supplemented with initial and homogeneous Dirichlet boundary conditions, is expressed as:
\begin{equation}
    \label{uform}
    \left\{
    \begin{aligned}
        &\frac{\partial \theta(u)}{\partial t} - \nabla \cdot (K(u) \nabla u) - \nabla\cdot \bar K(\textbf{x},t,u)=\mathcal{S}(\textbf{x},t,u) && \text{in } \Omega \times I, \\
        &u(\cdot, 0) = u_0 && \text{in } \Omega, \\
        &u = 0 && \text{on } \partial\Omega \times I.
    \end{aligned}
    \right.
\end{equation}
where $K$ and $\bar K$ are expressed in terms of hydraulic parameters of the medium, and $\mathcal{S}$ represents a non-linear source term.
\begin{definition}
A function $u$ is called a \textit{weak solution} of \eqref{uform} if and only if $u \in L^\infty(I; L^2(\Omega))$, $\theta(u(0))=\theta(u_0)$ in $L^2(\Omega)$, $\Phi(u) \in X = L^2(I; \mathcal{H})$, and $\frac{\partial \theta(u)}{\partial t} \in X' =L^2(I; \mathcal{H}')$, where $\mathcal{H}=H^1_0(\Omega)$, such that
\begin{equation}
    \int_I \left\langle\frac{\partial \theta(u)}{\partial t}, v\right\rangle_{\mathcal{H}' \times \mathcal{H}}dt + \int_I (\nabla \Phi (u), \nabla v)dt + \int_I \bigg(\bar K(u), \nabla v\bigg) dt = \qty(\mathcal S(u),v), \quad \forall v \in X,
    \label{mainequation}
\end{equation}
where $\Phi$ is defined as
\begin{equation}
    \Phi(s) = \int_0^s K(\xi) d\xi.
    \label{defphi}
\end{equation}
\end{definition}

Assume that we can extend the functions $K$, $\bar K$ and $\theta$ to $\mathbb R$, such that they satisfy the following assumptions

\begin{enumerate}[label=(H\arabic*)]
    \item \label{H1} The function $\theta : \mathbb{R} \to \mathbb{R}$ satisfies the following conditions:
    \begin{enumerate}
        \item \textbf{Monotonicity and Regularity:} $\theta$ is strictly increasing with $\theta(0)=0$. We assume $\theta \in C^1(\mathbb{R})$ and its derivative $\theta'$ is bounded.
        
        \item \textbf{Inverse Regularity:} The inverse function $\theta^{-1}:\mathbb R\to \mathbb R$ exists and belongs to $W^{1,1}_{Loc} (\mathbb R)$.
        
        \item \textbf{Global Continuity Condition:} There exist constants $0<\delta\leq 1$ and $H_\theta >0$ such that for all $\zeta,\eta \in \mathbb{R}$:
        $$|\zeta-\eta| \leq H_\theta\left(|\theta(\zeta) -\theta(\eta)|+|\theta(\zeta) -\theta(\eta)|^\delta\right).$$
    \end{enumerate}
    \item \label{H2} The function $\theta$ satisfies the growth condition $\theta(\zeta)\zeta \geq \alpha \zeta^2$ for all $\zeta \in \mathbb R$, with a constant $\alpha >0$.
    \item \label{H3}
The functions $K : \mathbb{R} \to \mathbb{R}$ and $\bar{K} : \Omega \times I \times \mathbb{R} \to \mathbb{R}^d$ are assumed to be non-negative, bounded by a constant $M > 0$ component-wise, and satisfy the Carathéodory conditions (measurable in $\textbf{x}$, continuous in $(t, u)$). 

Furthermore, we impose the following structural conditions:
\begin{enumerate}
    \item \textbf{Decomposition:} The vector field $\bar{K}$ admits the factorization:
    \begin{equation}
        \bar{K}(\textbf{x},t,u) = K(u) \bar{K}_1(\textbf{x},t,u),
    \end{equation}
    where the auxiliary function $\bar{K}_1$ is non-negative component-wise, continuous in $(t, u)$, measurable in $\textbf{x}$, and uniformly bounded by $M$.
    
    \item \textbf{Degeneracy Regime:} The set where $K$ vanishes is negligible. We assume that:
        $$
        \lim_{\delta \to 0} \|\mathbf{1}_{\{K < \delta\}}\|_{L^1(\Omega)} = 0.
        $$
\end{enumerate}
   \item \label{H4} Assume that $\mathcal{S}: \Omega \times I \times \mathbb{R} \to \mathbb{R}$ is a Carathéodory function, specifically:
    \begin{enumerate}
        \item \textbf{Measurability:} For every $\eta \in \mathbb{R}$, the function $(\textbf{x},t) \mapsto \mathcal{S}(\textbf{x},t,\eta)$ is measurable.
        \item \textbf{Continuity and Boundedness:} For almost every $\textbf{x} \in \Omega$, the function $(t,\eta) \mapsto \mathcal{S}(\textbf{x},t,\eta)$ is continuous and bounded with a constant $M_S>0$.
    \end{enumerate}
    
\end{enumerate}
\begin{remark}
    Under Hypothesis \ref{H3}, the function \(\Phi\) is an increasing $C^1(\mathbb R)$ function with bounded derivative.
\end{remark}

For the remainder of this paper, we don't denote the dependence of the source term on spatial and temporal variables for notational brevity.
\begin{theorem}
There exists a weak solution to the equation \eqref{uform}.
\label{theorem1}
\end{theorem}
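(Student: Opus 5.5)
We prove Theorem~\ref{theorem1} by Rothe's method. Fix $N\in\mathbb N$, set $\tau=T/N$, $t_n=n\tau$, and let $u^0=u_0$. For $n=1,\dots,N$ we solve the semi-implicit problem: find $u^n$ such that $\Phi(u^n)\in\mathcal H$ and
\begin{equation*}
\Big(\frac{\theta(u^n)-\theta(u^{n-1})}{\tau},v\Big)+(\nabla\Phi(u^n),\nabla v)+\big(K(u^n)\bar K_1(t_n,u^{n-1}),\nabla v\big)=(\mathcal S(u^{n-1}),v)
\end{equation*}
for all $v\in\mathcal H$. The source and the factor $\bar K_1$ are lagged, so the only implicit nonlinearities are $\theta$ and $K$.

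\textbf{Elliptic step.} Writing $w=\Phi(u^n)$, the map $w\mapsto \theta(\Phi^{-1}(w))/\tau - \Delta w$ plus the convective term is monotone. We therefore use maximal monotone operator theory in a weighted Sobolev space with weight $K(u)$; this is needed because $\Phi^{-1}$ may be discontinuous where $K$ vanishes. The convective term is handled by the factorization $\bar K=K\bar K_1$, which gives
\[
|(K(u)\bar K_1,\nabla v)|\le M\|K(u)^{1/2}\|\,\|K(u)^{1/2}\nabla v\|.
\]
It is also controlled against the $L^1$-contraction-type structure coming from $\theta$, using \ref{H1}(c) to convert $\theta$-differences into $u$-differences. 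Existence then follows from surjectivity of maximal monotone coercive operators, and uniqueness from testing the difference of two solutions with a regularized sign of $\Phi(u_1)-\Phi(u_2)$.

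\textbf{A priori estimates.} Test with $v=\Phi(u^n)$ and introduce $B(s)=\int_0^s \Phi'(\xi)\,\theta'(\xi)^{-1}\ldots$. More simply, use the convex potential $\Psi(r)=\int_0^r\Phi(\theta^{-1}(\rho))\,d\rho$, which satisfies $(\theta(u^n)-\theta(u^{n-1}))\Phi(u^n)\ge \Psi(\theta(u^n))-\Psi(\theta(u^{n-1}))$. Summing over $n$ and applying Young's inequality to the bounded convective and source terms gives
\[
\max_n\int_\Omega\Psi(\theta(u^n))+\sum_n\tau\|\nabla\Phi(u^n)\|^2\le C.
\]
If testing with $\Phi(u)$ fails to give $L^\infty(L^2)$ control of $u$, we instead test with $u^n$, or with $\theta(u^n)$ via \ref{H2}, to get $\max_n\|u^n\|\le C$. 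Boundedness of $\theta'$ then yields $\|\theta(u^n)\|\le C$. The equation itself gives
\[
\sum_n\tau\Big\|\frac{\theta(u^n)-\theta(u^{n-1})}{\tau}\Big\|_{-1}^2\le C.
\]
Finally, testing with $\Phi(u^n)-\Phi(u^{n-1})$, or using the standard Alt--Luckhaus time-translate estimate
\[
\int_0^{T-h}\big(\theta(\bar u_\tau(t+h))-\theta(\bar u_\tau(t)),\Phi(\bar u_\tau(t+h))-\Phi(\bar u_\tau(t))\big)\,dt\le Ch,
\]
gives compactness in time.

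\textbf{Passage to the limit.} Define the piecewise constant interpolants $\bar u_\tau$ and the piecewise linear interpolant $\hat\theta_\tau$ of the $\theta(u^n)$. Up to a subsequence we have
\[
\Phi(\bar u_\tau)\rightharpoonup \chi \text{ in } X,\qquad \partial_t\hat\theta_\tau\rightharpoonup \xi \text{ in } X',\qquad \bar u_\tau\overset{*}{\rightharpoonup}u \text{ in } L^\infty(I;L^2).
\]
The main obstacle is identifying the nonlinear limits, i.e.\ showing $\chi=\Phi(u)$ and that the weak limit of $\theta(\bar u_\tau)$ is $\theta(u)$, since $\Phi$ is degenerate and we have no gradient bound on $u$ itself. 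The plan is as follows. The time-translate estimate, the spatial bound on $\Phi(\bar u_\tau)$ in $X$, and the monotonicity of $\theta\circ\Phi^{-1}$ together yield, by the Alt--Luckhaus/Kruzhkov argument, strong convergence of $\theta(\bar u_\tau)$ in $L^1(\Omega\times I)$. Hypothesis \ref{H1}(c) turns this into strong $L^1$ convergence of $\bar u_\tau$, hence a.e.\ convergence. By continuity and boundedness, $\Phi(\bar u_\tau)\to\Phi(u)$, $K(\bar u_\tau)\bar K_1(\cdot,\bar u_\tau(\cdot-\tau))\to \bar K(u)$ and $\mathcal S(\bar u_\tau(\cdot-\tau))\to\mathcal S(u)$ hold a.e., and dominated convergence gives strong convergence in $L^2$. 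Here we use that the shifted interpolant converges to the same limit by the time estimate. Hypothesis \ref{H3}(b) is used to ensure that the degenerate set does not obstruct the identification in the weighted setting. Passing to the limit in the summed discrete equation gives \eqref{mainequation}. The initial condition $\theta(u(0))=\theta(u_0)$ follows from $\hat\theta_\tau\in H^1(I;\mathcal H')$, which is uniformly bounded, so $\hat\theta_\tau\to\theta(u)$ in $C(\bar I;\mathcal H')$ and the equality holds in $C_w(I;L^2)$.
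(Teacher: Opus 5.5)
Your route is genuinely different from the paper's. You work with the Kirchhoff variable $w=\Phi(u^n)$, take a single Rothe limit, and use the Alt--Luckhaus time-translate compactness. The paper instead replaces $K$ by $K+\epsilon$ and lags both $K$ and $\bar K$, so every discrete step is semilinear with $\theta$ as its only nonlinearity. It lets $\tau\to0$ with $\epsilon$-dependent bounds and the classical Aubin--Lions lemma, and only then lets $\epsilon\to0$ with $\epsilon$-uniform bounds and Moussa's nonlinear Aubin--Lions lemma. Your route can be made to work, but two steps fail as written.

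\textbf{Elliptic step.} With $K(u^n)$ implicit in the convection, the operator $w\mapsto\tau^{-1}\beta(w)-\Delta w-\nabla\cdot(g(w)\mathbf{b})$ is not monotone. Here $\beta=\theta\circ\Phi^{-1}$, $g=K\circ\Phi^{-1}$ and $\mathbf{b}=\bar K_1(t_n,u^{n-1})$. The cross term $\big((g(w_1)-g(w_2))\mathbf{b},\nabla(w_1-w_2)\big)$ has no sign. Controlling it would require something like $g$ Lipschitz and $\beta$ strongly monotone, and neither holds: $K$ is only continuous, and in the application $\beta'(\Phi(u))=\theta'(u)/K(u)$ vanishes at $u=u^*$. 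Consequently:
\begin{itemize}
\item the surjectivity theorem for maximal monotone operators does not apply;
\item \ref{H1}(c), a H\"older bound on $\theta^{-1}$, says nothing about $K(u_1)-K(u_2)$;
\item your regularized-sign uniqueness argument does not close either, though uniqueness is not needed;
\item a weight ``$K(u)$'' that depends on the unknown is meaningless; in your formulation the right space is simply $w\in\mathcal H$.
\end{itemize}
The repair is a Schauder fixed point. Freeze $g(\tilde w)$ in the convection and solve the maximal monotone problem $\tau^{-1}\beta(w)-\Delta w=\tau^{-1}\theta(u^{n-1})+\mathcal S(u^{n-1})+\nabla\cdot(g(\tilde w)\mathbf{b})$. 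Its solution is unique and bounded in $\mathcal H$ independently of $\tilde w$, because $(\beta(w),w)\ge0$ and $K\le M$. Conclude with Rellich's theorem and the continuity of $g$. Do not repair it by lagging the convective term alone: the matching factor $K(u^n)$ in diffusion and convection is exactly what you need in the next step.

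\textbf{The bound $\max_n\|u^n\|\le C$.} You need it for $u\in L^\infty(I;L^2(\Omega))$, for your weak-$*$ limit, and for the compactness of $\hat\theta_\tau$ in $C(\bar I;\mathcal H')$ behind the initial condition. Testing with $\Phi(u^n)$ only controls $\int_\Omega\Psi(\theta(u^n))$. That dominates $\|u^n\|^2$ only if $K$ stays away from $0$ for large $|s|$, which is not assumed. Your fallback tests $u^n$ and $\theta(u^n)$ are not admissible, because in the degenerate discrete problem you only know $\Phi(u^n)\in\mathcal H$.

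This is exactly what the paper's $\epsilon$-regularization buys. There $u^\epsilon\in X$, so in Proposition~\ref{propositionuniformbounds} the test $\theta(u^\epsilon)$ is legitimate. Since diffusion and convection carry the same factor $K(u^\epsilon)$, the convection is absorbed into $\|\sqrt{K(u^\epsilon)\theta'(u^\epsilon)}\nabla u^\epsilon\|^2$.

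On your route, test instead with $Z_\eta(\Phi(u^n))\in\mathcal H$, where $Z_\eta(\Phi(s))=\int_0^s\theta'(\xi)K(\xi)/(K(\xi)+\eta)\,d\xi$. This is a Lipschitz function of $\Phi(s)$ vanishing at $0$. Then:
\begin{itemize}
\item the diffusion term is nonnegative;
\item the convection is absorbed by half of it, since $\theta'K^2/(K+\eta)\le L_\theta M$ (this is where the implicit $K(u^n)$ is used);
\item letting $\eta\to0$ by monotone convergence gives $\|\theta(u^n)\|^2-\|\theta(u^{n-1})\|^2\le C\tau(1+\|\theta(u^n)\|)$;
\item a discrete Gr\"onwall argument and \ref{H2} then give the bound.
\end{itemize}
Both this limit and the Alt--Luckhaus compactness lemma require $\{s:K(s)=0\}$ to be Lebesgue-null. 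Only then is $\theta\circ\Phi^{-1}$ a continuous function rather than a multivalued graph. State this explicitly instead of invoking \ref{H3}(b) vaguely; it is presumably what \ref{H3}(b) is meant to encode.
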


The proof of Theorem \ref{theorem1} is established through several steps in the subsequent sections.
\section{Weighted Sobolev Spaces}
Suppose we have a weight $\omega: \Omega \to \mathbb{R}$ such that $\omega$ is measurable, non-negative, and bounded. For $\varphi \in C^{\infty}_c(\Omega)$, we define the semi-norm
\begin{equation*}
    |\varphi|_V = \sqrt{\int_\Omega \omega^2(\textbf{x}) |\nabla \varphi|^2\, d\textbf{x}}.
\end{equation*}
We define the associated norm by
\begin{equation*}
    \|\varphi\|_V = \sqrt{\|\varphi\|^2 + |\varphi|_V^2},
\end{equation*}
which is indeed a norm on $C^{\infty}_c(\Omega)$. We set $V = \overline{C^\infty _c (\Omega)}^{\|\cdot\|_V}$ as the completion of $C^\infty _c (\Omega)$ with respect to the norm $\|\cdot\|_V$. By the Poincaré inequality, we have the following continuous embeddings:
\begin{equation}
    \mathcal{H} \subset V \subset L^2(\Omega).
\end{equation}
Furthermore, if the weight is uniformly bounded away from zero, we have:
\begin{equation}
    V = \mathcal{H}.
\end{equation}
For more information about these spaces, see \cite{Cavalheiro2008WeightedSobolev}.

The space $V$ is a Hilbert space equipped with the following inner product:
\begin{equation}
    (u,v)_V = (u,v) + (u, v)_{\omega},
\end{equation}
where
\begin{equation}
    (u,v)_\omega = (\omega(\textbf{x}) \nabla u, \omega(\textbf{x})\nabla v).
\end{equation}
\section{Semi-discretized Richards' Equation in Time}

In this section, we consider a semi-discretization of equation \eqref{uform} in time and subsequently demonstrate that the resulting problem admits a unique solution in the space $V$.

We employ a semi-implicit Euler scheme for the time discretization. The time interval $I = (0, T)$ is divided into $N \in \mathbb{N}$ sub-intervals of equal length $\tau = \frac{T}{N}$. We define the discrete time points $t_n = n\tau$ for $0 \leq n \leq N$. Let $u_n$ denote the approximation of $u(t_n)$. For $n \geq 1$, given $u_{n-1}$, we seek $u_n \in V$ satisfying
\begin{equation}
    (\theta(u_n), v) - (\theta(u_{n-1}), v) + \tau (u_n, v)_\omega + \tau \left(\omega \bar K_1^{n-1}(u_{n-1}), \omega \nabla v \right) = \tau(\mathcal{S}^{n-1}(u^{n-1}),v), \quad \forall v \in V,
    \label{semidiscfull}
\end{equation}
where the weight function is defined as
\begin{equation}
    \omega(\textbf{x}) = \sqrt{K(u_{n-1})}.
\end{equation}
in the rest of the paper we drop the index for $\mathcal S^{n-1}(u^{n-1})=S(.,t_{n-1}, u^{n-1}), \bar K^{n-1}(u^{n-1})= K(.,t_{n-1}, u^{n-1}), \bar K_1^{n-1}(u^{n-1}) = \bar K_1 (.,t_{n-1}, u^{n-1})$ and denote them by $\mathcal S, \bar K, \bar K_1$, respectively.

Equation \eqref{semidiscfull} can be rewritten in the following variational form: find $u_n \in V$ such that
\begin{equation}
    (\theta(u_n), v) + \tau (u_n, v)_\omega = \langle f, v \rangle_{V' \times V}, \quad \forall v \in V,
    \label{semidesceq}
\end{equation}
where $f \in V'$ is well-defined due to hypothesis \ref{H3} and \ref{H4}.

To prove existence, we first regularize the equation by introducing a parameter $\epsilon > 0$ and defining the regularized water saturation function $\theta_{\epsilon}(u) = \theta(u) + \epsilon u$. The regularized problem is to find $u_{n,\epsilon} \in V$ such that
\begin{equation}
    (\theta(u_{n,\epsilon}), v) + \epsilon(u_{n,\epsilon}, v) + \tau (u_{n,\epsilon}, v)_\omega = \langle f, v \rangle_{V' \times V}, \quad \forall v \in V.
    \label{regsemidesc}
\end{equation}

\begin{proposition}
    For all $\epsilon > 0$, the regularized equation \eqref{regsemidesc} possesses a unique solution in $V$. Furthermore, there exists a constant $C > 0$, independent of $\epsilon$, such that
    $$ \|u_{n,\epsilon}\|_V \leq C, \quad \forall \epsilon > 0. $$
    \label{prop:existofreg}
\end{proposition}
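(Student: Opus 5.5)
We view the left-hand side of \eqref{regsemidesc} as a nonlinear operator $A_\epsilon : V \to V'$ defined by
\[
\langle A_\epsilon u, v\rangle_{V'\times V} = (\theta(u), v) + \epsilon (u,v) + \tau (u,v)_\omega ,
\]
and apply the Minty--Browder theorem for monotone, hemicontinuous, coercive operators on the (reflexive, indeed Hilbert) space $V$. Equivalently, one can invoke maximal monotone theory from \cite{brezis1973ope}.

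\textbf{Well-definedness.} Since $\theta(0)=0$ and $\theta'$ is bounded by \ref{H1}, we have $|\theta(u)|\le \|\theta'\|_\infty |u|$. Hence $\theta(u)\in L^2(\Omega)$ whenever $u\in V\subset L^2(\Omega)$, and
\[
|\langle A_\epsilon u,v\rangle| \le (\|\theta'\|_\infty+\epsilon+\tau)\|u\|_V\|v\|_V .
\]
So $A_\epsilon$ is bounded into $V'$.

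\textbf{Strong monotonicity.} Because $\theta$ is nondecreasing, $(\theta(u)-\theta(w),u-w)\ge 0$. Therefore
\[
\langle A_\epsilon u - A_\epsilon w, u-w\rangle \ge \epsilon\|u-w\|^2 + \tau|u-w|_V^2 \ge \min(\epsilon,\tau)\|u-w\|_V^2 .
\]
This gives strict monotonicity, and hence uniqueness. It also gives coercivity: $\langle A_\epsilon u,u\rangle/\|u\|_V\to\infty$.

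\textbf{Hemicontinuity.} We check that $s\mapsto \langle A_\epsilon(u+sw),v\rangle$ is continuous. The $\theta$-term is continuous by dominated convergence, using continuity of $\theta$ and the bound $|\theta(u+sw)v|\le C(|u|+|w|)|v|$. The remaining terms are linear in $s$.

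Minty--Browder then gives existence for every $f\in V'$. That $f\in V'$ follows from \ref{H3}--\ref{H4}: the term $(\omega\bar K_1,\omega\nabla v)$ is bounded by $M\sqrt{M}\,|\Omega|^{1/2}|v|_V$ because $\omega^2=K\le M$, and the other terms are controlled by $\|v\|$.

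\textbf{Uniform bound in $\epsilon$.} This is the main point, since the monotonicity constant $\min(\epsilon,\tau)$ degenerates as $\epsilon\to0$. We test \eqref{regsemidesc} with $v=u_{n,\epsilon}$ and use \ref{H2} in place of the $\epsilon$-term, noting that $\epsilon\|u\|^2\ge0$ can simply be dropped. This gives
\[
\alpha\|u_{n,\epsilon}\|^2 + \tau|u_{n,\epsilon}|_V^2 \le \|f\|_{V'}\|u_{n,\epsilon}\|_V \le \tfrac{\min(\alpha,\tau)}{2}\|u_{n,\epsilon}\|_V^2 + \tfrac{1}{2\min(\alpha,\tau)}\|f\|_{V'}^2 ,
\]
where the last step is Young's inequality. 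It follows that $\|u_{n,\epsilon}\|_V^2\le \|f\|_{V'}^2/\min(\alpha,\tau)^2$, which is independent of $\epsilon$.

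The only subtlety is that $f$ depends on $u_{n-1}$ and on the weight, but not on $\epsilon$. So $C$ may depend on $\tau$, $M$, $M_S$, $|\Omega|$, $\alpha$ and $\|u_{n-1}\|$, which is harmless at this stage.
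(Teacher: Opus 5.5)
Your proof is correct. Your uniform bound is the paper's own: test with $u_{n,\epsilon}$, use \ref{H2}, discard $\epsilon\|u_{n,\epsilon}\|^2$, and obtain $\|u_{n,\epsilon}\|_V\le\|f\|_{V'}/\min(\alpha,\tau)$.

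Existence and uniqueness, however, follow a genuinely different route. The paper does not invoke Minty--Browder. It defines the $L$-scheme map $T$, where $Tu\in V$ solves $L(Tu-u,v)+(\theta(u),v)+\epsilon(u,v)+\tau(Tu,v)_\omega=\langle f,v\rangle$; this linear problem is well-posed by Lax--Milgram. Using the mean value theorem and $\epsilon\le\theta_\epsilon'\le L_\theta+\epsilon$ with $L>L_\theta+\epsilon$, the paper shows that $T$ is a contraction with factor $(L-\epsilon)/L$ in the equivalent norm $\|u\|_L^2=\|u\|^2+\frac{\tau}{L}|u|_V^2$. Banach's fixed point theorem then yields existence and uniqueness at once.

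That route is elementary and constructive: it proves convergence of the very linearization the paper studies and uses numerically. Its cost is that the contraction factor tends to $1$ as $\epsilon\to0$, so the argument genuinely requires $\epsilon>0$. The paper must then pass to the limit separately via Lemma \ref{Lemmabres}.

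Your monotone-operator argument is nonconstructive but shorter and needs no stabilization parameter. It also does not really need the regularization. Your own computation with \ref{H2} shows $\langle A_0u,u\rangle\ge\min(\alpha,\tau)\|u\|_V^2$ for the unregularized operator $A_0$. That operator is still bounded, monotone and hemicontinuous, and uniqueness follows from the strict monotonicity of $\theta$, exactly as in the paper's uniqueness argument. So along your route, Theorem \ref{eistenceforsemidis} would follow directly, without the $\epsilon$-regularization or the limiting step.
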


\begin{proof}
    Let $\epsilon>0$. We employ the $L$-scheme, introduced in \cite{Pop2004}, to establish existence. Let $L > 0$ and an initial guess $u \in V$ be given. We define $Tu \in V$ as the solution to the linear problem:
    \begin{equation}
        L(Tu - u, v) + (\theta(u), v) + \epsilon(u, v) + \tau (Tu, v)_\omega = \langle f, v \rangle_{V' \times V}, \quad \forall v \in V.
        \label{Lschemop}
    \end{equation}
    The operator $T: V \to V$ is well-defined. Indeed, problem \eqref{Lschemop} can be expressed as
    \begin{equation}
        a_L(Tu, v) = \chi_u(v), \quad \forall v \in V,
    \end{equation}
    where the bilinear form $a_L$ and the linear functional $\chi_u$ are defined by
    \begin{align*}
        a_L(w, v) &= L(w, v) + \tau (w, v)_\omega, \\
        \chi_u(v) &= \langle f, v \rangle + (L - \epsilon)(u, v) - (\theta(u),v).
    \end{align*}
    The linear form $\chi_u$ is bounded on $V$, and $a_L$ is a bounded bilinear form. Moreover, $a_L$ is coercive:
    \begin{equation}
        a_L(w, w) = L \|w\|^2 + \tau |w|_V^2 \geq \min(L, \tau) \|w\|_V^2.
    \end{equation}
    Thus, by the Lax-Milgram theorem \cite{LaxMilgram+1955+167+190}, for every $u \in V$, there exists a unique $Tu \in V$ satisfying \eqref{Lschemop}.

    We now show that $T$ is a contraction mapping on $V$. If so, $T$ has a unique fixed point $u \in V$, which is the unique solution to \eqref{regsemidesc}. Let $u_1, u_2 \in V$. Subtracting the equations for $Tu_1$ and $Tu_2$, we obtain
    \begin{equation}
        L(Tu_1 - Tu_2, v) - L(u_1 - u_2, v) + (\theta_\epsilon(u_1) - \theta_\epsilon(u_2), v) + \tau(Tu_1 - Tu_2, v)_\omega = 0, \quad \forall v \in V.
    \end{equation}
    Testing with $v = Tu_1 - Tu_2$ yields
    \begin{equation}
        L\|Tu_1 - Tu_2\|^2 + \tau |Tu_1 - Tu_2|_V^2 = L(u_1 - u_2, Tu_1 - Tu_2) - (\theta_\epsilon(u_1) - \theta_\epsilon(u_2), Tu_1 - Tu_2).
    \end{equation}
    Since $\theta \in C^1$, by the Mean Value Theorem, there exists $\xi$ with values between $u_1$ and $u_2$ such that
    \begin{equation}
        \theta_\epsilon(u_1) - \theta_\epsilon(u_2) = \theta'_\epsilon(\xi)(u_1 - u_2) \text{ a.e}.
    \end{equation}
    We have 
    \begin{equation}
        \epsilon \leq \theta'_\epsilon \leq L_\theta + \epsilon,
        \label{boundonthetaeps}
    \end{equation}
    where $L_\theta = \sup \theta' > 0$. Choosing $L > L_\theta + \epsilon$, we estimate the right-hand side by Cauchy-Schwarz and \eqref{boundonthetaeps}:
    \begin{align*}
        L\|Tu_1 - Tu_2\|^2 + \tau |Tu_1 - Tu_2|_V^2 &\leq \int_\Omega (L - \theta'_\epsilon(\xi)) |u_1 - u_2| |Tu_1 - Tu_2| \, d\textbf{x} \\
        &\leq (L - \epsilon) \|u_1 - u_2\| \|Tu_1 - Tu_2\|.
    \end{align*}
    Dividing by $L$ and introducing the equivalent norm $\|u\|_L^2 = \|u\|^2 + \frac{\tau}{L}|u|_V^2$, we obtain
    \begin{equation}
        \|Tu_1 - Tu_2\|_L \leq \frac{L - \epsilon}{L} \|u_1 - u_2\|_L.
    \end{equation}
    Since $\frac{L - \epsilon}{L} < 1$, $T$ is a contraction. By the Banach Fixed Point Theorem \cite{Agarwal2018}, $T$ has a unique fixed point $u_{n,\epsilon}$.

    To establish a uniform bound, we test \eqref{regsemidesc} with $v = u_{n,\epsilon}$:
    \begin{equation*}
        (\theta(u_{n,\epsilon}), u_{n,\epsilon}) + \epsilon \|u_{n,\epsilon}\|^2 + \tau |u_{n,\epsilon}|_V^2 = \langle f, u_{n,\epsilon} \rangle.
    \end{equation*}
    Using hypothesis \ref{H2}, we have
    $$ \min(\alpha, \tau) \|u_{n,\epsilon}\|_V^2 \leq \alpha \|u_{n,\epsilon}\|^2 + \tau |u_{n,\epsilon}|_V^2 \leq \langle f, u_{n,\epsilon} \rangle \leq \|f\| \|u_{n,\epsilon}\|_V. $$
    Therefore, for all $\epsilon > 0$,
    \begin{equation}
        \|u_{n,\epsilon}\|_V \leq \frac{\|f\|}{\min(\alpha, \tau)}.
    \end{equation}
\end{proof}

\begin{theorem}
    The semi-discretized Richards equation \eqref{semidesceq} admits a unique solution in $V$.
    \label{eistenceforsemidis}
\end{theorem}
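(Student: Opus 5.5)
The plan is to pass to the limit $\epsilon \to 0$ in the regularized problem \eqref{regsemidesc}, using the uniform bound of Proposition \ref{prop:existofreg}, and then to prove uniqueness directly from the monotonicity of $\theta$.

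\textbf{Existence.} By Proposition \ref{prop:existofreg}, the family $\{u_{n,\epsilon}\}_{\epsilon>0}$ is bounded in the Hilbert space $V$. Hence there is a sequence $\epsilon_k\to 0$ and some $u_n\in V$ with $u_{n,\epsilon_k}\rightharpoonup u_n$ weakly in $V$. In particular $u_{n,\epsilon_k}\rightharpoonup u_n$ weakly in $L^2(\Omega)$, and $(u_{n,\epsilon_k},v)_\omega\to(u_n,v)_\omega$ for every $v\in V$. The term $\epsilon_k(u_{n,\epsilon_k},v)$ tends to zero, since $\|u_{n,\epsilon_k}\|\le C$.

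The delicate term is the nonlinear one, $(\theta(u_{n,\epsilon_k}),v)$. Because the weight $\omega=\sqrt{K(u_{n-1})}$ may vanish, $V$ does not embed compactly in $L^2$, so strong convergence is not available. I would instead use Minty's trick, viewing $A(w)=\theta(w)+\tau(-\nabla\cdot\omega^2\nabla w)$ as a maximal monotone operator from $V$ to $V'$. Since $\theta'$ is bounded, $\theta(u_{n,\epsilon_k})$ is bounded in $L^2$, so after extraction $\theta(u_{n,\epsilon_k})\rightharpoonup \chi$ in $L^2$. Passing to the limit in \eqref{regsemidesc} gives
\[
(\chi,v)+\tau(u_n,v)_\omega=\langle f,v\rangle \quad \forall v\in V.
\]
Testing \eqref{regsemidesc} with $u_{n,\epsilon_k}$ itself shows
\[
\limsup_k\big[(\theta(u_{n,\epsilon_k}),u_{n,\epsilon_k})+\tau|u_{n,\epsilon_k}|_V^2\big]\le \langle f,u_n\rangle=(\chi,u_n)+\tau|u_n|_V^2 .
\]
Weak lower semicontinuity of $|\cdot|_V^2$ then yields $\limsup_k (\theta(u_{n,\epsilon_k}),u_{n,\epsilon_k})\le(\chi,u_n)$.

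The Nemytskii operator $w\mapsto\theta(w)$ is monotone and continuous on $L^2(\Omega)$, hence maximal monotone. By the standard Minty argument (Brezis), $\chi=\theta(u_n)$: write $(\theta(u_{n,\epsilon_k})-\theta(w),u_{n,\epsilon_k}-w)\ge0$, pass to the limsup, and take $w=u_n-s v$ with $s\downarrow0$. Therefore $u_n$ solves \eqref{semidesceq}.

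I expect this identification $\chi=\theta(u_n)$ to be the main obstacle. The point needing care is checking that $(\theta(u_{n,\epsilon_k}),u_{n,\epsilon_k})$ is controlled without any compactness, which the energy identity above supplies.

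\textbf{Uniqueness.} Let $u^1,u^2\in V$ be two solutions. Subtracting the two equations and testing with $v=u^1-u^2$ gives
\[
(\theta(u^1)-\theta(u^2),u^1-u^2)+\tau|u^1-u^2|_V^2=0 .
\]
Both terms are nonnegative because $\theta$ is increasing, so $(\theta(u^1)-\theta(u^2))(u^1-u^2)=0$ almost everywhere. Since $\theta$ is strictly increasing by \ref{H1}, this forces $u^1=u^2$ almost everywhere, and hence $u^1=u^2$ in $V$, as the $V$-norm includes the $L^2$ norm. Alternatively, \ref{H1}(c) gives the same conclusion quantitatively.
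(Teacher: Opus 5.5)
Your proof is correct and follows essentially the same route as the paper. Both extract a weak limit using the uniform bound of Proposition \ref{prop:existofreg}, and both use the energy identity with weak lower semicontinuity of $|\cdot|_V$ to obtain $\limsup_k(\theta(u_{n,\epsilon_k}),u_{n,\epsilon_k})\le(\chi,u_n)$, then identify $\chi=\theta(u_n)$ by monotonicity and get uniqueness from strict monotonicity of $\theta$. The only differences are that you carry out the Minty argument by hand where the paper cites Lemma \ref{Lemmabres} with Lemma \ref{maximalmonotone}, and that your aside about the operator $A:V\to V'$ is never actually used.
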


To establish the proof of Theorem \ref{eistenceforsemidis}, we recall the following definition and lemma from monotone operator theory.

\begin{definition}[Proposition 2.2 in \cite{brezis1973ope}]
    Let $H$ be a Hilbert space and $A: H \to H$ an operator. We say that $A$ is \textit{monotone} if $(Ax - Ay, x - y) \geq 0$ for all $x, y \in H$. We say that $A$ is \textit{maximal monotone} if $A$ is monotone and $\operatorname{range}(I + A) = H$.
\end{definition}

The following lemma provides a convergence result for maximal monotone operators, which is essential for identifying the limit of the nonlinear terms.

\begin{lemma}[Proposition 2.5 in \cite{brezis1973ope}]
    Let $H$ be a Hilbert space and $A: H \to H$ a maximal monotone operator. Let $\{x_n\} \subset H$ be a sequence such that $x_n \rightharpoonup x$, $Ax_n \rightharpoonup y$, and $\limsup (x_n, Ax_n) \leq (x, y)$. Then $Ax = y$ and $\underset{n}{\lim} (x_n, Ax_n) = (x,Ax)$.
    \label{Lemmabres}
\end{lemma}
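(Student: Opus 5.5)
The plan is to prove the lemma directly from monotonicity tested against arbitrary points $z \in H$, and then to use the maximality condition $\operatorname{range}(I+A)=H$ to identify the limit $y$ with $Ax$. Throughout, $A$ is single-valued and defined on all of $H$, as in the definition above. Note that every cross term of the form $(Ax_n, z)$ or $(Az, x_n)$ with $z$ fixed converges, by weak convergence. So the only term that needs care is $(x_n, Ax_n)$, and it is controlled by the $\limsup$ hypothesis.

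\emph{Step 1 (monotonicity passes to the limit).} Fix $z \in H$. Monotonicity gives $(Ax_n - Az, x_n - z) \geq 0$, which expands to
\begin{equation*}
(x_n, Ax_n) \geq (Ax_n, z) + (Az, x_n) - (Az, z).
\end{equation*}
Taking $\limsup$ on both sides, the right-hand side converges to $(y,z) + (Az,x) - (Az,z)$. The hypothesis bounds the left-hand side by $(x,y)$. Rearranging yields
\begin{equation*}
(y - Az, x - z) \geq 0 \quad \text{for all } z \in H.
\end{equation*}

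\emph{Step 2 (identification via maximality).} This is the only point where maximality enters, and it is the key step. Since $\operatorname{range}(I+A) = H$, there exists $z \in H$ with $z + Az = x + y$. For this $z$ we have $y - Az = z - x$. Substituting into the inequality of Step 1 gives $-\|x - z\|^2 \geq 0$, so $z = x$. Then $x + Ax = x + y$, i.e.\ $Ax = y$.

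\emph{Step 3 (convergence of the pairing).} Apply monotonicity with $z = x$:
\begin{equation*}
(x_n, Ax_n) \geq (Ax_n, x) + (Ax, x_n) - (Ax, x).
\end{equation*}
Taking $\liminf$, and using $Ax_n \rightharpoonup y = Ax$ together with $x_n \rightharpoonup x$, we get $\liminf (x_n, Ax_n) \geq (x, Ax)$. The hypothesis together with Step 2 gives $\limsup (x_n, Ax_n) \leq (x, y) = (x, Ax)$. Hence $\lim_n (x_n, Ax_n) = (x, Ax)$.

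I do not expect any real obstacle in this argument. The one subtle point is to pass to the limit in Step 1 using a $\limsup$ rather than a limit, since $(x_n, Ax_n)$ is a pairing of two only weakly convergent sequences. With that handled, the rest follows directly from the definitions.
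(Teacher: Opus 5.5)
Your proof is correct, and it is essentially the standard argument behind the cited result. The paper gives no proof of its own and only cites Brezis, whose argument also passes monotonicity to the limit via the $\limsup$ hypothesis, identifies $y = Ax$ by maximality, and obtains the $\liminf$ bound from $(Ax_n - Ax, x_n - x) \geq 0$. The one difference is that Brezis invokes maximality in the graph sense, while you derive the identification directly from $\operatorname{range}(I+A)=H$ by choosing $z$ with $z + Az = x + y$; this makes your argument self-contained with respect to the paper's own definition of maximal monotone.
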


Next, we establish that the specific nonlinear functions appearing in our problem satisfy these properties.

\begin{lemma}
    The functions $\theta, \Phi: L^2(\Omega) \to L^2(\Omega)$ are maximal monotone.
    \label{maximalmonotone}
\end{lemma}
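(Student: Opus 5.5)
We need to show that the Nemytskii operators $u \mapsto \theta(u)$ and $u\mapsto \Phi(u)$ act from $L^2(\Omega)$ to $L^2(\Omega)$, are monotone, and satisfy $\operatorname{range}(I+A)=L^2(\Omega)$. The argument is the same for both functions, so we write it for a generic $g\in\{\theta,\Phi\}$. We use only three properties of $g$: it is nondecreasing, $C^1$ with bounded derivative, and $g(0)=0$. For $\theta$ these come from \ref{H1}. For $\Phi$ they come from the Remark after \ref{H4}, since $\Phi'=K\in[0,M]$ and $\Phi(0)=0$ by \eqref{defphi}.

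\textbf{Well-definedness.} Let $L_g=\sup|g'|$. The mean value theorem gives the pointwise bound $|g(s)|\le L_g|s|$. Hence $\|g(u)\|\le L_g\|u\|$, so $g$ maps $L^2(\Omega)$ into itself. Similarly, $|g(s)-g(r)|\le L_g|s-r|$ gives $\|g(u)-g(v)\|\le L_g\|u-v\|$, so the induced operator is Lipschitz continuous on $L^2(\Omega)$. Measurability of $g(u)$ follows from the continuity of $g$.

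\textbf{Monotonicity.} Since $g$ is nondecreasing, $(g(s)-g(r))(s-r)\ge 0$ for all real $s,r$. Applying this pointwise and integrating over $\Omega$ gives $(g(u)-g(v),u-v)\ge 0$.

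\textbf{Surjectivity of $I+g$.} Fix $h\in L^2(\Omega)$. The scalar map $\psi(s)=s+g(s)$ is continuous, strictly increasing with $\psi'\ge 1$, and tends to $\pm\infty$ as $s\to\pm\infty$. So $\psi$ is a bijection of $\mathbb R$. Its inverse is Lipschitz with constant $1$, because $|\psi(s)-\psi(r)|\ge|s-r|$. Define $u=\psi^{-1}\circ h$. Then $u$ is measurable, since $\psi^{-1}$ is continuous. Moreover $\psi^{-1}(0)=0$ (because $g(0)=0$), so $|u|\le|h|$ and therefore $u\in L^2(\Omega)$. Finally $u+g(u)=h$ almost everywhere. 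This proves $\operatorname{range}(I+g)=L^2(\Omega)$, and with monotonicity, maximal monotonicity in the sense of the Definition above.

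The only step that needs care is surjectivity, and the pointwise inversion of the scalar function $\psi$ settles it. Note that we do not need strict monotonicity of $\Phi$. This matters, since $K$ may vanish and $\Phi$ can then be merely nondecreasing.
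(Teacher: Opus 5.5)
Your proof is correct and follows essentially the same route as the paper: both use the pointwise Lipschitz and nondecreasing properties of $g\in\{\theta,\Phi\}$ for well-definedness and monotonicity, and both obtain $\operatorname{range}(I+g)=L^2(\Omega)$ by inverting the scalar map $s\mapsto s+g(s)$, whose derivative is at least $1$. Your explicit observation that $\psi^{-1}(0)=0$, which gives $|u|\le|h|$ and hence $u\in L^2(\Omega)$, fills in a step the paper leaves implicit.
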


\begin{proof}
    Let $g \in \{\theta, \Phi\}$. By assumptions \ref{H1} and \ref{H3}, the function $g: \mathbb{R} \to \mathbb{R}$ is Lipschitz continuous and non-decreasing. Consequently, the induced operator $g: L^2(\Omega) \to L^2(\Omega)$ is well-defined and monotone.
    
    To prove maximality, we must show that the operator $G = I + g$ maps $L^2(\Omega)$ onto itself. Consider the scalar function $G(s) = s + g(s)$ for $s \in \mathbb{R}$. Since $g$ is non-decreasing, we have $G'(s) = 1 + g'(s) \geq 1$. This lower bound implies that $G$ is strictly increasing and coercive ($|G(s)| \to \infty$ as $|s| \to \infty$), ensuring that $G: \mathbb{R} \to \mathbb{R}$ is a bijection.
    
    Furthermore, since $G' \geq 1$, the inverse function $G^{-1}$ is Lipschitz continuous. Therefore, the operator $G$ induces a bijection on $L^2(\Omega)$, implying that $\operatorname{range}(I + g) = L^2(\Omega)$. Thus, $g$ is maximal monotone.
\end{proof}

We now provide the proof of Theorem \ref{eistenceforsemidis}.

\begin{proof}
    By Proposition \ref{prop:existofreg}, for each $\epsilon > 0$, there exists a unique solution $u_{n,\epsilon}$ to \eqref{regsemidesc}, satisfying uniform bound $\|u_{n,\epsilon}\|_V \leq C$. Since $V$ is a Hilbert space, there exists $u_n \in V$ such that, up to a subsequence, $u_{n,\epsilon} \rightharpoonup u_n$ weakly in $V$ as $\epsilon \to 0$. This implies
    \begin{equation}
        (u_{n,\epsilon}, v) \to (u_n, v), \quad \forall v \in L^2(\Omega),
    \end{equation}
    and
    \begin{equation}
        (u_{n,\epsilon}, v)_\omega \to (u_n, v)_\omega, \quad \forall v \in V.
    \end{equation}
    Since $\theta$ is Lipschitz continuous, we have
    \begin{equation}
        \|\theta(u_{n,\epsilon})\| \leq L_\theta \|u_{n,\epsilon}\| \leq L_\theta C.
    \end{equation}
    Thus, $\{\theta(u_{n,\epsilon})\}_{\epsilon > 0}$ is bounded in $L^2(\Omega)$, and up to a subsequence, there exists $w \in L^2(\Omega)$ such that $\theta(u_{n,\epsilon}) \rightharpoonup w$. Additionally, since $\{u_{n,\epsilon}\}$ is bounded, $\epsilon u_{n,\epsilon} \to 0$ strongly in $L^2(\Omega)$. Passing to the limit $\epsilon \to 0$ in \eqref{regsemidesc}, we obtain
    \begin{equation}
        (w, v) + \tau (u_n, v)_\omega = \langle f, v \rangle, \quad \forall v \in V.
        \label{weakconveq}
    \end{equation}
    By Lemma \ref{maximalmonotone}, $\theta$ is a maximal monotone operator. By Lemma \ref{Lemmabres}, it suffices to show that $\limsup_{\epsilon \to 0} (u_{n,\epsilon}, \theta(u_{n,\epsilon})) \leq (u_n, w)$. Testing \eqref{regsemidesc} with $v = u_{n,\epsilon}$ yields
    \begin{equation}
        (\theta(u_{n,\epsilon}), u_{n,\epsilon}) = \langle f, u_{n,\epsilon} \rangle - \tau |u_{n,\epsilon}|_V^2 - \epsilon \|u_{n,\epsilon}\|^2.
    \end{equation}
    Taking the $\limsup$ as $\epsilon \to 0$:
    \begin{equation}
        \limsup_{\epsilon \to 0} (\theta(u_{n,\epsilon}), u_{n,\epsilon}) \leq \langle f, u_n \rangle - \tau \liminf_{\epsilon \to 0} |u_{n,\epsilon}|_V^2.
    \end{equation}
    Since the semi-norm $|\cdot|_V$ is convex and continuous, it is weakly lower semi-continuous (see \cite{Brezis2011}, Corollary 3.9), so $\liminf |u_{n,\epsilon}|_V^2 \geq |u_n|_V^2$. Testing \eqref{weakconveq} with $v = u_n$ we obtain $(w, u_n) = \langle f, u_n \rangle - \tau |u_n|_V^2$. Therefore,
    \begin{equation}
        \limsup_{\epsilon \to 0} (\theta(u_{n,\epsilon}), u_{n,\epsilon}) \leq \langle f, u_n \rangle - \tau |u_n|_V^2 = (w, u_n).
    \end{equation}
    We conclude that $w = \theta(u_n)$, and thus equation \eqref{semidesceq} has at least one solution.

    To prove uniqueness, suppose there exist two solutions $u^{(1)}$ and $u^{(2)}$, where we drop the index $n$. Taking the difference and testing with $v = u^{(1)} - u^{(2)} \in V$, we get
    \begin{equation}
        (\theta(u^{(1)}) - \theta(u^{(2)}), u^{(1)} - u^{(2)}) + \tau |u^{(1)} - u^{(2)}|_V^2 = 0.
    \end{equation}
    Since both terms are non-negative (due to the monotonicity of $\theta$), we must have
    \begin{equation}
        (\theta(u^{(1)}) - \theta(u^{(2)}))(u^{(1)} - u^{(2)}) = 0 \quad \text{a.e. in } \Omega.
        \label{uniqdiff}
    \end{equation}
    Let $S = \{\textbf{x} \in \Omega : u^{(1)}(\textbf{x}) \neq u^{(2)}(\textbf{x})\}$. From \eqref{uniqdiff}, we have $\theta(u^{(1)}) = \theta(u^{(2)})$ a.e. on $S$. Since $\theta$ is strictly increasing, this implies $u^{(1)} = u^{(2)}$ a.e. on $S$. This is a contradiction unless $S$ has measure zero. Thus, $u^{(1)} = u^{(2)}$ a.e.
\end{proof}

\section{Proof of Theorem \ref{theorem1}}
This section presents the proof of Theorem \ref{theorem1}. We rely on Theorem \ref{eistenceforsemidis} to establish existence for the discrete problem, and subsequently pass to the limit as $\tau \to 0$ to show that the regularized continuous problem possesses a solution. Finally, we pass to the limit as $\epsilon \to 0$ to obtain the weak solution to the original problem.

For $\epsilon > 0$, we regularize $K$ by replacing it with $K_\epsilon = K + \epsilon$. We seek $u^\epsilon \in X\cap L^\infty(I; L^2(\Omega))$ satisfying $\theta(u^\epsilon(0))=\theta(u_0) \in L^2(\Omega)$, $\theta(u^\epsilon) \in X\cap L^\infty(I; L^2(\Omega))$, and $\frac{\partial \theta(u^\epsilon)}{\partial t} \in X'$, such that
\begin{equation}
    \begin{aligned}
        &\int_I \bigg\langle \frac{\partial \theta(u^\epsilon)}{\partial t}, v\bigg\rangle_{\mathcal{H}'\times \mathcal{H}} dt + \int_I ((K(u^\epsilon) + \epsilon)\nabla u^\epsilon, \nabla v)dt\\ &+ \int_I \bigg(\bar K(u^\epsilon), \nabla v \bigg) dt = \int_I (\mathcal{S}(u^\epsilon), v)dt, \forall v \in X.
    \end{aligned}
    \label{continuicereg}
\end{equation}
\begin{remark}
    By the Lions–Magenes lemma \cite{LionsMagenes1972}, the regularity specified implies that $\theta(u_\epsilon)\in C(I;L^2(\Omega))$.
\end{remark}
Using Theorem \ref{eistenceforsemidis}, we establish the following estimates.
\begin{proposition}
    The following estimate holds:
    \begin{equation}
        \sup_n \| \theta(u_n^\epsilon)\| +\tau\sum_{k=1}^N \|\nabla u_{n}^\epsilon\|^2 + \sum_{k=1}^N \|\theta(u_n^\epsilon) - \theta(u_{n-1}^\epsilon)\|^2 \leq C(\epsilon),
    \end{equation}
    where $u_n^\epsilon$ is the solution of 
    \begin{equation}
        \left(\theta(u_n^\epsilon) - \theta(u_{n-1}^\epsilon), v\right) + \tau((K(u_{n-1}^\epsilon)+\epsilon) \nabla u_{n}^\epsilon, \nabla v)+  \tau \bigg(\bar K(u_{n-1}^\epsilon), \nabla v \bigg) = \tau (\mathcal{S}(u^\epsilon_{n-1}),v),\; \forall v\in \mathcal{H},
        \label{regulirisedsemidisc}
    \end{equation}
    and $C(\epsilon)>0$ depends only on $\epsilon$.
    \label{proptimedis}
\end{proposition}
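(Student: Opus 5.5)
The estimate follows from a discrete energy argument in which we test \eqref{regulirisedsemidisc} with a function that turns the time-difference term into a telescoping quantity. Since $\theta$ is nonlinear, the test function $v=u_n^\epsilon$ produces $(\theta(u_n^\epsilon)-\theta(u_{n-1}^\epsilon),u_n^\epsilon)$. This only telescopes through the convex primal $\Theta(s)=\int_0^s\theta'(\xi)\xi\,d\xi$ (or the Legendre-type function $\int_0^{\theta(s)}\theta^{-1}$), and it does not directly give the $\|\theta(u_n)-\theta(u_{n-1})\|^2$ term. I will therefore test with $v=\theta(u_n^\epsilon)$. This is admissible in $\mathcal H$ because $\theta\in C^1$ with bounded derivative and $\theta(0)=0$, and $u_n^\epsilon\in\mathcal H$ by Theorem \ref{eistenceforsemidis}: since $K+\epsilon\ge\epsilon$, the weighted space $V$ coincides with $\mathcal H$. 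The algebraic identity then gives
\[
\tfrac12\bigl(\|\theta(u_n^\epsilon)\|^2-\|\theta(u_{n-1}^\epsilon)\|^2+\|\theta(u_n^\epsilon)-\theta(u_{n-1}^\epsilon)\|^2\bigr)+\tau\bigl((K(u_{n-1}^\epsilon)+\epsilon)\theta'(u_n^\epsilon)\nabla u_n^\epsilon,\nabla u_n^\epsilon\bigr)=\dots
\]
This gives the first and third terms but only a degenerate gradient term, because $\theta'$ may vanish.

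To recover $\tau\|\nabla u_n^\epsilon\|^2$, I will add a second test. Testing with $v=u_n^\epsilon$ gives $\tau\epsilon\|\nabla u_n^\epsilon\|^2$ on the left. The time term is $(\theta(u_n)-\theta(u_{n-1}),u_n)\ge \Theta^*(\theta(u_n))-\Theta^*(\theta(u_{n-1}))$, where $\Theta^*(r)=\int_0^r\theta^{-1}(\sigma)d\sigma$ is convex and nonnegative because $\theta^{-1}$ is increasing with $\theta^{-1}(0)=0$. That is the convexity inequality $(a-b)(\Theta^*)'(a)\ge\Theta^*(a)-\Theta^*(b)$. By \ref{H2}, $\theta(s)s\ge\alpha s^2$, so $|\theta^{-1}(r)|\le |r|/\alpha$ and $\Theta^*(r)\le r^2/(2\alpha)$. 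Summing over $n$ telescopes to $\Theta^*(\theta(u_n))\le \Theta^*(\theta(u_0))+\dots\le C\|u_0\|^2+\dots$, with $\Theta^*\ge0$.

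The right-hand sides are handled the same way in both tests. For the convective term, $|\tau(\bar K(u_{n-1}),\nabla v)|\le \tau M|\Omega|^{1/2}\|\nabla v\|$, and Young's inequality absorbs it into $\tau\epsilon\|\nabla u_n\|^2/4$ at the cost of $C\tau/\epsilon$. In the $\theta$-test, $\|\nabla\theta(u_n)\|\le L_\theta\|\nabla u_n\|$, so that term too is absorbed by the gradient term coming from the $u_n$-test once the two tests are added. The source term is bounded by $\tau M_S|\Omega|^{1/2}\|v\|$, and Poincaré's inequality plus Young's inequality absorb it similarly. The degenerate gradient term from the $\theta$-test is nonnegative and can be dropped. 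Summing from $n=1$ to $m$ gives
\[
\tfrac12\|\theta(u_m)\|^2+\tfrac12\sum_{n\le m}\|\theta(u_n)-\theta(u_{n-1})\|^2+\tfrac{\epsilon}{2}\tau\sum_{n\le m}\|\nabla u_n\|^2\le \tfrac12\|\theta(u_0)\|^2+C\|u_0\|^2+C(\epsilon)T .
\]
Taking the supremum over $m$ yields the claim, since $\|\theta(u_0)\|\le L_\theta\|u_0\|$.

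The main obstacle I expect is the cross term $\tau(\bar K(u_{n-1}),\nabla\theta(u_n))$ and the need to control it uniformly in $n$ without any Gronwall blow-up. Because $\bar K$ and $\mathcal S$ are bounded by constants independent of the solution, the estimate is linear in $\tau$ and summing gives $C(\epsilon)T$ directly, so no discrete Gronwall lemma is needed. Should a Gronwall step be required, for instance if $\mathcal S$ were only of linear growth, I would use the discrete Gronwall lemma with $\tau$ small. I also need to check that $\theta(u_n^\epsilon)\in\mathcal H$ is a legitimate test function, which follows from the chain rule for Lipschitz $C^1$ functions on $H^1_0$.
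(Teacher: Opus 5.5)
Your proposal is correct and follows essentially the paper's route. You test with $u_n^\epsilon$ to get the $\epsilon$-weighted gradient bound; your $\Theta^*(\theta(s))$ coincides with the paper's $\int_0^s\theta'(\xi)\xi\,d\xi$ by the substitution $\sigma=\theta(\xi)$. You then test with $\theta(u_n^\epsilon)$ and use the algebraic identity for the remaining two terms. The only difference is bookkeeping: you add the two tested identities and absorb all cross terms at once, whereas the paper first closes the gradient bound $C_1(\epsilon)$ and then reuses it to control the right-hand side of the $\theta(u_n^\epsilon)$-test.
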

\begin{proof}
    Let $u_n^\epsilon \in \mathcal H$ be the solution of \eqref{regulirisedsemidisc}.
    Testing this equation with $v=u_{k}^\epsilon$ for $1\leq k \leq N$ yields:
    \begin{equation}
        (\theta(u_k^\epsilon) - \theta(u_{k-1}^\epsilon), u_{k}^\epsilon) + \tau ((K(u_{k-1}^\epsilon)+\epsilon) \nabla u_{k}^\epsilon, \nabla u_{k}^\epsilon) + \tau \bigg(\bar K(u_{k-1}^\epsilon), \nabla u_{k}^\epsilon\bigg) = \tau (\mathcal{S}(u^\epsilon_{k-1}),u^\epsilon_k).
    \end{equation}
    Summing over $k=1, \dots, N$, we obtain
    \begin{equation}
        \begin{aligned}
            &\sum_{k=1}^N (\theta(u_k^\epsilon) - \theta(u_{k-1}^\epsilon), u_{k}^\epsilon) + \tau \sum_{k=1}^N  ((K(u_{k-1}^\epsilon)+\epsilon) \nabla u_{k}^\epsilon, \nabla u_{k}^\epsilon) \\&+ \tau \sum_{k=1}^N \left\{\bigg(\bar K(u_{k-1}^\epsilon), \nabla u_{k-1}^\epsilon\bigg) - (\mathcal{S}(u^\epsilon_{k-1}),u^\epsilon_{k})\right\}= 0. 
        \end{aligned}   
    \end{equation}
    This equation can be decomposed as
    \begin{equation}
        T_1 + T_2 + T_3 =0.
    \end{equation}
    Regarding $T_1$, we use the fact that $\theta' \geq 0$. For all $x,y \in \mathbb R$, the inequality holds $$(\theta(x)-\theta(y))x \geq \int_y^x \theta'(s) sds.$$ Consequently,
    \begin{align*}
        T_1 &\geq \int_\Omega \sum_{k=1}^N \int_{u_{k-1}^\epsilon}^{u_{k}^\epsilon} \theta'(s) s ds d\textbf{x}\\
        & =  \int_\Omega  \int_{u_{0}^\epsilon}^{u_{N}^\epsilon} \theta'(s) s ds d\textbf{x}\\
        & =  \int_\Omega  \underbrace{\int_{0}^{u_{N}^\epsilon} \theta'(s) s ds}_{\geq 0} - \int_{0}^{u_{0}^\epsilon} \theta'(s) s ds d\textbf{x}\\
        & \geq  -\int_\Omega\int_{0}^{u_{0}^\epsilon} \theta'(s) s ds d\textbf{x} \\
        & \geq  -L_\theta\int_\Omega \int_{0}^{u_{0}^\epsilon} s ds d\textbf{x} \\
         & = -\frac{L_\theta}{2}\|u_0\|^2.
    \end{align*}
    For $T_2$, we observe that
    \begin{equation}
        T_2 \geq \epsilon \tau\sum_{k=1}^N \|\nabla u_n^\epsilon\|^2.
    \end{equation}
    Finally, for $T_3$, Cauchy-Schwarz's, Young's and Poincaré inequalities yield
    \begin{align*}
        -T_3 &\leq \tau \sum_{k=1}^N  \qty{\bigg|(\bar K(u_{k-1}^\epsilon), \nabla u_{k}^\epsilon)\bigg|+\left|(\mathcal S(u_{k-1}^\epsilon),u_k^\epsilon)\right|}\\
        &\leq \tau \sum_{k=1}^N  |\Omega|^{\frac 12}\qty{M\|\nabla u_n^\epsilon\|+M_S \|u_k^\epsilon\|}\\
        & \leq  |\Omega|^{\frac 12}(M +C_P M_S) \tau \sum_{k=1}^N\|\nabla u_k^\epsilon\|\\
        &\leq\frac{|\Omega|T^2(M +C_P M_S)^2}{2\epsilon} + \frac{\epsilon\tau}{2}\sum_{k=1}^N \|\nabla u_k^\epsilon\|^2, \text{ for } \tau \leq T.
    \end{align*} 
    Combining the above estimates results in
    \begin{equation}
        \tau\sum_{k=1}^N \|\nabla u_n^\epsilon\|^2 \leq\frac{ |\Omega|T^2(M +C_P M_S)^2}{\epsilon^2} + \frac{L_\theta}{\epsilon}\|u_0\|^2 =: C_1(\epsilon).
    \end{equation}
    Testing with $v = \theta(u_n^\epsilon)$ is possible because $\theta$ is Lipschitz and $\theta(0) = 0$. Summing from $k=1$ to $n$ (where $1\leq n \leq N$), we obtain 
    \begin{equation}
        \begin{aligned}
            &\sum_{k=1}^n (\theta(u_k^\epsilon) - \theta(u_{k-1}^\epsilon), \theta(u_k^\epsilon)) + \tau \underbrace{\sum_{k=1}^n  ((K(u_{k-1}^\epsilon)+\epsilon)\theta'(u_k^\epsilon) \nabla u_{k}^\epsilon, \nabla u_{k}^\epsilon)}_{\geq 0}\\ &+ \tau \sum_{k=1}^n \left\{\bigg(\bar K(u_{k-1}^\epsilon)\theta'(u_k^\epsilon), \nabla u_{k}^\epsilon\bigg)-(\mathcal S(u_{k-1}^\epsilon),\theta(u_k)) \right\}= 0.
        \end{aligned}   
    \end{equation}
    Denoting this as $T_4+T_5+T_6 = 0$, we note that $T_5 \geq 0$. Similar to the bound for $T_3$ with the fact that $\theta$ is Lipschitz, we find
    \begin{equation}
        -T_6 \leq \frac{ |\Omega|T(M+C_PM_S)^2 L_\theta ^2}{2} + \frac{\tau}{2}\sum_{k=1}^N \|\nabla u_k^\epsilon\|^2 \leq \frac{ |\Omega|T(M+C_PM_S)^2 L_\theta ^2}{2} + \frac{C_1(\epsilon)}{2}.
    \end{equation}
    For $T_4$, algebraic identity
    \begin{equation}
        (x-y)x = \frac{1}{2}(x^2 -y^2 + (x-y)^2)
    \end{equation}
    yields
    \begin{align*}
        T_4 &= \frac{1}{2}\sum_{k=1}^n \left\{\|\theta(u_k^\epsilon)\|^2 - \|\theta(u_{k-1}^\epsilon)\|^2 + \|\theta(u_k^\epsilon)-\theta(u_{k-1}^\epsilon)\|^2\right\}\\ 
        &= \frac{1}{2}\|\theta(u_{n}^\epsilon)\|^2 - \frac{1}{2}\|\theta(u_{0})\|^2 + \frac{1}{2}\sum_{k=1}^n\|\theta(u_k^\epsilon)-\theta(u_{k-1}^\epsilon)\|^2.
    \end{align*}
    We conclude that
    \begin{equation}
        \sup_n \|\theta(u_n^\epsilon) \|^2 + \sum_{k=1}^N \|\theta(u_k^\epsilon)-\theta(u_{k-1}^\epsilon)\|^2 \leq |\Omega|T(M+C_PM_S)^2L_\theta^2 + \|\theta(u_0)\|^2 + C_1(\epsilon) =: C_2(\epsilon).
    \end{equation}
\end{proof}
\begin{corollary}
    Under \ref{H2}, $\alpha |\zeta| \leq |\theta(\zeta)|$ for all $\zeta\in \mathbb R$, we have that $\underset{n}{\sup} \|u_n^\epsilon\| \leq C(\epsilon)$.
    \label{Cortimedisc}
\end{corollary}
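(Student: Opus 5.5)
The argument is short: combine the pointwise lower bound on $\theta$ coming from \ref{H2} with the uniform $L^2$ bound on $\theta(u_n^\epsilon)$ from Proposition \ref{proptimedis}.

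\textbf{Step 1 (pointwise inequality).} For $\zeta \neq 0$, hypothesis \ref{H2} gives $\alpha \zeta^2 \leq \theta(\zeta)\zeta \leq |\theta(\zeta)|\,|\zeta|$. Dividing by $|\zeta|$ yields $\alpha|\zeta| \leq |\theta(\zeta)|$. For $\zeta = 0$ the inequality holds trivially, since $\theta(0)=0$ by \ref{H1}.

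\textbf{Step 2 (apply it pointwise).} For each $n$, apply Step 1 at almost every $\textbf{x}\in\Omega$ with $\zeta = u_n^\epsilon(\textbf{x})$. This gives
$$\alpha |u_n^\epsilon(\textbf{x})| \leq |\theta(u_n^\epsilon(\textbf{x}))| \quad \text{a.e. in } \Omega .$$
Squaring and integrating over $\Omega$ yields $\alpha\|u_n^\epsilon\| \leq \|\theta(u_n^\epsilon)\|$.

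\textbf{Step 3 (conclude).} Taking the supremum over $n$ and using Proposition \ref{proptimedis}, we get
$$\sup_n \|u_n^\epsilon\| \leq \alpha^{-1}\sup_n \|\theta(u_n^\epsilon)\| \leq \alpha^{-1} C(\epsilon).$$
Renaming the constant $\alpha^{-1}C(\epsilon)$ as $C(\epsilon)$ gives the claim. The new constant depends only on $\epsilon$ and the fixed data; it does not depend on $n$ or $\tau$.

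\textbf{Points to check.} None of these steps is a real obstacle; only two details need care. First, the index $n=0$ must be included: there $u_0^\epsilon=u_0$, and $\|u_0\|$ is finite by assumption, so this term is covered either by Step 2 or directly. Second, Proposition \ref{proptimedis} bounds $\sup_n\|\theta(u_n^\epsilon)\|^2$ rather than the norm itself, so a square root must be taken when passing to the bound in Step 3.
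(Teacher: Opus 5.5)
Your proof is correct and is essentially the argument the paper intends. The paper gives no separate proof; it only notes that \ref{H2} yields $\alpha|\zeta|\le|\theta(\zeta)|$, so $\alpha\|u_n^\epsilon\|\le\|\theta(u_n^\epsilon)\|$, and the claim follows from the uniform bound on $\sup_n\|\theta(u_n^\epsilon)\|$ in Proposition \ref{proptimedis}. Your remarks about the index $n=0$ and the square root (the proof of that proposition actually bounds $\sup_n\|\theta(u_n^\epsilon)\|^2$) are correct details that the paper leaves implicit.
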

\begin{theorem}
    The regularized equation \eqref{continuicereg} possesses a solution.
    \label{regsolconvintime}
\end{theorem}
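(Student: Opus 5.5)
We pass to the limit $\tau\to 0$ in the semi-discrete problem \eqref{regulirisedsemidisc}, with $\epsilon>0$ fixed, using Rothe functions. Existence of each $u_n^\epsilon\in\mathcal H$ comes from Theorem \ref{eistenceforsemidis}; since the weight is now $\sqrt{K(u_{n-1}^\epsilon)+\epsilon}\geq\sqrt\epsilon$, we have $V=\mathcal H$.

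\textbf{Rothe functions and uniform bounds.} On each $(t_{n-1},t_n]$ define the piecewise constant interpolants
\[
\bar u_\tau(t)=u_n^\epsilon,\qquad \bar u_\tau^-(t)=u_{n-1}^\epsilon,
\]
and the piecewise linear interpolant $\Theta_\tau$ of the values $\theta(u_n^\epsilon)$. Then \eqref{regulirisedsemidisc} reads
\[
\int_I\langle\partial_t\Theta_\tau,v\rangle+((K(\bar u_\tau^-)+\epsilon)\nabla\bar u_\tau,\nabla v)+(\bar K(\bar u_\tau^-),\nabla v)\,dt=\int_I(\mathcal S(\bar u_\tau^-),v)\,dt .
\]
Proposition \ref{proptimedis} and Corollary \ref{Cortimedisc} bound the following uniformly in $\tau$:
\begin{itemize}
\item $\bar u_\tau$ in $X\cap L^\infty(I;L^2)$;
\item $\theta(\bar u_\tau)$ and $\Theta_\tau$ in $L^\infty(I;L^2)$;
\item $\|\Theta_\tau-\theta(\bar u_\tau)\|_{L^2(I;L^2)}^2\le \tfrac{\tau}{3}\sum_k\|\theta(u_k^\epsilon)-\theta(u_{k-1}^\epsilon)\|^2\to 0$.
\end{itemize}
The equation itself, with $K,\bar K,\mathcal S$ bounded, bounds $\partial_t\Theta_\tau$ in $X'$. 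Since $\theta$ is Lipschitz with $\theta(0)=0$, $\theta(\bar u_\tau)$ is bounded in $X$. Finally $\|\bar u_\tau-\bar u_\tau^-\|_{L^2(I;L^2)}\to 0$: by \ref{H1}(c) and Hölder it is controlled by a power of $\sum_k\tau\|\theta(u_k)-\theta(u_{k-1})\|^2$, plus a term of size $\tau\|u_0\|^2$ from the first step.

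\textbf{Compactness.} This is the main obstacle. Aubin--Lions applied to $\Theta_\tau$ (bounded in $L^2(I;L^2)$ with time derivative in $X'$) is not enough by itself, because $\Theta_\tau$ is not bounded in $X$. So I use instead the time-translate estimate for $\theta(\bar u_\tau)$: test the discrete equation with $\theta(u_{k+m})-\theta(u_k)\in\mathcal H$ to get
\[
\int_0^{T-h}\|\theta(\bar u_\tau(t+h))-\theta(\bar u_\tau(t))\|^2\,dt\le Ch .
\]
Combined with the $X$-bound on $\theta(\bar u_\tau)$ (Simon's compactness theorem), this gives $\theta(\bar u_\tau)\to w$ strongly in $L^2(I;L^2)$ and a.e. Because $\theta^{-1}$ is continuous and \ref{H1}(c) holds, $\bar u_\tau\to u^\epsilon:=\theta^{-1}(w)$ strongly in $L^2(I;L^2)$ and a.e.; likewise $\bar u_\tau^-\to u^\epsilon$. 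Along a subsequence we also have $\bar u_\tau\rightharpoonup u^\epsilon$ in $X$, $\Theta_\tau\to\theta(u^\epsilon)$, and $\partial_t\Theta_\tau\rightharpoonup\partial_t\theta(u^\epsilon)$ in $X'$. As a fallback for identifying $w=\theta(u^\epsilon)$, Lemma \ref{Lemmabres} with Lemma \ref{maximalmonotone} can be used, as in Theorem \ref{eistenceforsemidis}.

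\textbf{Passing to the limit.} The a.e. convergence of $\bar u_\tau^-$ together with the Carathéodory continuity of $K$, $\bar K$, $\mathcal S$ (jointly with $t$, since the coefficients are evaluated at $t_{n-1}\to t$) gives, by dominated convergence, strong convergence of $(K(\bar u_\tau^-)+\epsilon)\nabla v$, $\bar K(\bar u_\tau^-)$ and $\mathcal S(\bar u_\tau^-)$ in $L^2$. Combined with the weak convergence of $\nabla\bar u_\tau$, every term converges to its counterpart in \eqref{continuicereg}.

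\textbf{Initial condition.} $\Theta_\tau(0)=\theta(u_0)$ for all $\tau$. The limit $\theta(u^\epsilon)$ lies in $C(I;H^{-1})$, and also in $C(I;L^2)$ by the remark following \eqref{continuicereg}. Integrating by parts against $v\varphi(t)$ with $\varphi(T)=0$, $\varphi(0)=1$ then yields $\theta(u^\epsilon(0))=\theta(u_0)$, and the regularity $u^\epsilon,\theta(u^\epsilon)\in X\cap L^\infty(I;L^2)$ follows by weak lower semicontinuity.
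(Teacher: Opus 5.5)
Your argument is correct. It shares the paper's Rothe skeleton: the same interpolants, the bounds of Proposition \ref{proptimedis}, and dominated convergence for the lagged coefficients. The compactness step, however, is done differently.

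\textbf{Compactness.} The paper applies the Aubin--Lions lemma to the piecewise linear interpolant $\theta^{lin,\tau,\epsilon}$, asserting it is bounded in $X$ with $\partial_t\theta^{lin,\tau,\epsilon}$ bounded in $X'$. It then transfers the strong limit to $\theta(u^{\tau,\epsilon})$ and $\theta(u^{\tau-,\epsilon})$ via $\sum_n\|\theta(u_n^\epsilon)-\theta(u_{n-1}^\epsilon)\|^2\le C(\epsilon)$, and identifies $w^\epsilon=\theta(u^\epsilon)$ through the maximal monotone Lemma \ref{Lemmabres}. You instead prove an $O(h)$ time-translate estimate for the piecewise constant $\theta(\bar u_\tau)$ and invoke Simon's theorem. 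The estimate comes from testing the scheme, summed over steps $k+1,\dots,k+m$, with $\theta(u_{k+m}^\epsilon)-\theta(u_k^\epsilon)$. You then identify the limit simply by setting $u^\epsilon=\theta^{-1}(w)$, so the Lemma \ref{Lemmabres} fallback is unnecessary.

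\textbf{What each route buys.} The paper's route is shorter, but your objection to it is justified. On $[0,\tau]$ the gradient of $\theta^{lin,\tau,\epsilon}$ involves $\nabla\theta(u_0)$, which is controlled only if $\theta(u_0)\in\mathcal H$, whereas the paper's estimates use only $u_0\in L^2(\Omega)$. The paper's argument could be repaired, for example by applying Aubin--Lions on $(\tau_0,T)$, or by a discrete Aubin--Lions lemma in which only the piecewise constant function needs an $X$-bound. Your translate estimate involves only $u_n^\epsilon$ with $n\ge 1$, so it needs no regularity of $u_0$.

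\textbf{Cost and extras.} The cost of your route is the double-sum bookkeeping behind the $O(h)$ bound, which you should write out. For $h=m\tau$, Young's inequality applied to $\tau\sum_k\tau\sum_{j=k+1}^{k+m}$ of products of $1+\|\nabla u_j^\epsilon\|$ with $\|\nabla\theta(u_k^\epsilon)\|+\|\nabla\theta(u_{k+m}^\epsilon)\|$ gives $Ch$. For $h<\tau$ the bound follows directly from the increment estimate, and general $h$ follows by splitting $h=m\tau+r$. You also verify $\theta(u^\epsilon(0))=\theta(u_0)$ and handle $t_{n-1}\to t$ inside the Carath\'eodory coefficients. The paper's proof passes over both points.
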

\begin{proof}
    We define the interpolant
    \begin{equation}
        u^{\tau, \epsilon}(t) = u_n^\epsilon, \text{ for } t_{n-1} < t \leq t_n.
    \end{equation}
    By Proposition \ref{proptimedis}, $u^{\tau,\epsilon}$ is uniformly bounded (with respect to $\tau$) in $L^\infty(I; L^2(\Omega)) \cap X$. Defining the piecewise linear interpolant
    \begin{equation}
        \theta^{lin,\tau, \epsilon}(t) =  \frac{t-t_{n-1}}{\tau}(\theta(u_n^\epsilon) - \theta(u_{n-1}^\epsilon)) + \theta(u_{n-1}^\epsilon), \text{ for } t_{n-1} \leq t \leq t_n,
    \end{equation}
    it follows that
    \begin{equation}
        \frac{\partial \theta^{lin,\tau, \epsilon}}{\partial t} = \frac{\theta(u_n^\epsilon) - \theta(u_{n-1}^\epsilon)}{\tau}, \text{ for } t_{n-1}< t < t_n.
    \end{equation}
    Proposition \ref{proptimedis} and the fact that $u_n^\epsilon$ solves \eqref{regulirisedsemidisc} imply that $\frac{\partial \theta^{lin,\tau, \epsilon}}{\partial t}$ is uniformly bounded in $L^2(I, \mathcal{H}')$. Indeed, \eqref{regulirisedsemidisc} gives
    \begin{equation}
        \bigg(\frac{\theta(u_n^\epsilon) - \theta(u_{n-1}^\epsilon)}{\tau} , v\bigg) = -(K_\epsilon(u_{n-1}^\epsilon)\nabla u_n^\epsilon, \nabla v)-\bigg(\bar K(u_{n-1}^\epsilon), \nabla v \bigg)+\qty(\mathcal S(u_{n-1}^\epsilon),v), \forall v \in \mathcal H.
    \end{equation}
    Therefore,
    \begin{equation}
        \begin{aligned}
        \frac{|(\frac{\theta(u_n^\epsilon) - \theta(u_{n-1}^\epsilon)}{\tau} , v)|}{\|v\|_1} &\leq \frac{|(K_\epsilon(u_{n-1}^\epsilon)\nabla u_n^\epsilon, \nabla v)|}{\|v\|_1} +\frac{|(\bar K(u_{n-1}^\epsilon), \nabla v )|}{\|v\|_1}+\frac{\qty|\qty(\mathcal{S}(u_{n-1}^\epsilon),v)|}{\|v\|_1} \\&\leq |\Omega|^{\frac 12}(M+1) \|\nabla u_n^\epsilon\| + |\Omega|^{\frac 12}(M + C_PM_S), \text{ for } \epsilon\leq 1.
    \end{aligned}
    \end{equation}
    Consequently,
    \begin{equation}
        \left\|\frac{\theta(u_n^\epsilon) - \theta(u_{n-1}^\epsilon)}{\tau}\right\|_{-1} \leq |\Omega|^{\frac 12}(M+1) \|\nabla u_n^\epsilon\| +|\Omega|^{\frac 12}( M+C_PM_S),
    \end{equation}
    which implies
    \begin{equation}
        \left\|\frac{\theta(u_n^\epsilon) - \theta(u_{n-1}^\epsilon)}{\tau}\right\|_{-1}^2 \leq 2||\Omega|(M+1)^2 \|\nabla u_n^\epsilon\|^2 + 2|\Omega|(M+C_PM_S)^2.
    \end{equation}
    Summing over $n$ yields
    \begin{equation}
        \tau \sum_{n=1}^N \left\|\frac{\theta(u_n^\epsilon) - \theta(u_{n-1}^\epsilon)}{\tau}\right\|_{-1}^2 \leq 2T|\Omega|(M+C_PM_S)^2+ 2|\Omega|(M+1)^2 \tau \sum_{n=1}^N\|\nabla u_n^\epsilon\|^2  \leq C(\epsilon).
    \end{equation}
    Then proposition \ref{proptimedis} ensures that $\theta^{lin,\tau, \epsilon}$ is uniformly bounded in $X$. Observing that
    \begin{equation}
        | \theta^{lin,\tau, \epsilon}(t)| = \left|\frac{t-t_{n-1}}{\tau}(\theta(u_n^\epsilon) - \theta(u_{n-1}^\epsilon)) + \theta(u_{n-1}^\epsilon)\right|\leq |\theta(u_n^\epsilon)| + |\theta(u_{n-1}^\epsilon)| \text{ for } t_{n-1} \leq t \leq t_n,
    \end{equation}
    and similarly for the gradient:
    \begin{equation}
        |\nabla \theta^{lin,\tau, \epsilon}(t)| = \left|\frac{t-t_{n-1}}{\tau}(\nabla\theta(u_n^\epsilon) - \nabla\theta(u_{n-1}^\epsilon)) + \nabla\theta(u_{n-1}^\epsilon)\right|\leq |\nabla\theta(u_n^\epsilon)|+|\nabla\theta(u_{n-1}^\epsilon)|,
    \end{equation}
    we apply the Aubin–Lions lemma \cite{Aubin1963}: there exists $w^\epsilon \in L^2(I, \mathcal{H})$ with $\frac{\partial w^\epsilon}{\partial t} \in L^2(I, \mathcal{H}')$ such that
    \begin{align*}
        & \theta^{lin,\tau, \epsilon} \underset{\tau \to 0}{\to} w^\epsilon \text{ in } L^2(I; L^2(\Omega)),\\
        & \nabla \theta^{lin,\tau, \epsilon} \underset{\tau \to 0}{\rightharpoonup} \nabla w^\epsilon\text{ in } L^2(I; L^2(\Omega)),\\
        & \frac{\partial \theta^{lin,\tau, \epsilon}}{\partial t } \underset{\tau \to 0}{\rightharpoonup} \frac{\partial w^\epsilon}{\partial t }\text{ in } X'.
    \end{align*}
    Proposition \ref{proptimedis} provides the bound
    \begin{equation}
        \sum_{k=1}^N\|\theta(u_n^\epsilon)-\theta(u_{n-1}^\epsilon)\|^2 \leq C(\epsilon),
    \end{equation}
    allowing us to conclude that
    \begin{equation}
        \int_I \|\theta(u^{\tau, \epsilon}) - \theta(u^{\tau -, \epsilon})\|^2 \underset{\tau \to 0}{\to} 0,
        \label{timedelayconv}
    \end{equation}
    where $u^{\tau-, \epsilon}$ is defined by 
    \begin{equation}
        u^{\tau-, \epsilon} = u^{\tau, \epsilon}(t-\tau).
    \end{equation}
    Since $\theta^{lin,\tau, \epsilon} \underset{\tau \to 0}{\to} w^\epsilon$ and
    \begin{equation}
        \theta^{lin,\tau, \epsilon}(t) = \frac{t-t_{n-1}}{\tau}(\theta(u_n^{\tau,\epsilon}) - \theta(u_n^{\tau-,\epsilon})) + \theta(u_n^{\tau-,\epsilon}),
    \end{equation}
    it follows that
    \begin{equation}
        \theta^{lin,\tau, \epsilon}(t)-\theta(u_n^{\tau,\epsilon}) = \left(\frac{t-t_{n-1}}{\tau}-1\right)(\theta(u_n^{\tau,\epsilon}) - \theta(u_n^{\tau-,\epsilon})).
    \end{equation}
    Hence,
    \begin{equation}
        |\theta^{lin,\tau, \epsilon}(t)-\theta(u_n^{\tau,\epsilon})| \leq |\theta(u_n^{\tau,\epsilon}) - \theta(u_n^{\tau-,\epsilon})|, \text{ for } t \in [t_{n-1}, t_{n}].
    \end{equation}
    By \eqref{timedelayconv}, we obtain
    \begin{equation}
        \|\theta^{lin,\tau, \epsilon} - \theta(u^{\tau,\epsilon})\|_{L^2(I;L^2(\Omega))} \underset{\tau \to 0}{\to} 0.
    \end{equation}
    Consequently, we have in $L^2(I;L^2(\Omega))$
    \begin{align}
        &\theta(u^{\tau, \epsilon}) \underset{\tau \to 0}{\to} w^\epsilon,\\
        &\theta(u^{\tau-, \epsilon}) \underset{\tau \to 0}{\to} w^\epsilon.
        \label{thetastrongconv}
    \end{align}
    Since $u^{\tau, \epsilon}$ is uniformly bounded in $X$ by proposition \ref{proptimedis} and corollary \ref{Cortimedisc}, we have in $L^2(I; L^2(\Omega))$,
    \begin{align}
        &u^{\tau, \epsilon} \underset{\tau \to 0}{\rightharpoonup} u^\epsilon,\\
        &\nabla u^{\tau, \epsilon} \underset{\tau \to 0}{\rightharpoonup} \nabla u^\epsilon.
    \end{align}
    As $\theta$ is a maximal monotone operator on $L^2$, Lemma \ref{Lemmabres} implies that
    \begin{equation}
        w^{\epsilon}=\theta(u^\epsilon).
    \end{equation}
    Next, we verify that for all $v \in X$,
    \begin{equation}
        \int_I((K_\epsilon(u^{\tau-, \epsilon}))\nabla u^{\tau,\epsilon}, \nabla v) dt \underset{\tau \to 0}{\to} \int_I((K_\epsilon(u^{\epsilon}))\nabla u^{\epsilon}, \nabla v)dt.
    \end{equation}
    Using \ref{H1}, $u^{\tau-,\epsilon} \underset{\tau \to 0}{\to} u^\epsilon$ strongly in $L^2(\Omega \times I)$, and thus (up to a subsequence) $u^{\tau-,\epsilon} \to u^\epsilon$ a.e. Since $K_\epsilon$ is continuous and bounded, $K_\epsilon(u^{\tau-,\epsilon}) \to K_\epsilon(u^{\epsilon})$ a.e. Therefore, by the Lebesgue Dominated Convergence Theorem \cite{Royden1988},
    \[
    K_\epsilon(u^{\tau-,\epsilon}) \nabla v \underset{\tau \to 0}{\to} K_\epsilon(u^{\epsilon}) \nabla v
    \quad \text{in } L^2(\Omega \times I),
    \]
    for any $v \in X$. Combined with the weak convergence $\nabla u^{\tau,\epsilon} \underset{\tau \to 0}{\rightharpoonup} \nabla u^\epsilon$, the result follows. Similarly, continuity and boundedness of $\bar K$ and $\mathcal S$ ensures \begin{align}
        &\bar K(u^{\tau-,\epsilon}) \underset{\tau \to 0}{\to} \bar K(u^\epsilon)\\
        &\mathcal{S}(u^{\tau-,\epsilon}) \underset{\tau \to 0}{\to} \mathcal{S}(u^\epsilon)
    \end{align} in $L^2(\Omega \times I)$. 
    
    By taking the limit as $\tau \to 0$ in equation \eqref{regulirisedsemidisc}, we confirm that $u^\epsilon \in X$, with $\frac{\partial \theta(u^\epsilon)}{\partial t} \in X'$, is the solution of
    \begin{equation}
        \int _I \langle \frac{\partial \theta(u^\epsilon)}{\partial t}, v \rangle_{\mathcal H'\times \mathcal H} dt + \int_I (K_\epsilon(u^\epsilon) \nabla u^\epsilon, \nabla v) + \bigg(\bar K(u^\epsilon), \nabla v \bigg) dt = \int_I\qty(\mathcal{S}(u^\epsilon),v)dt, \forall v \in X.
    \end{equation}
    Furthermore, since $\{\theta(u^{\tau,\epsilon})\}_{\tau>0} \in L^\infty(I, L^2(\Omega))$ is uniformly bounded, thus $\theta(u^{\tau,\epsilon}) \underset{\tau \to 0}{\overset{*}{\rightharpoonup}} w_1^\epsilon$ weak-* in $L^\infty(I, L^2(\Omega))$. The strong convergence $\theta(u^{\tau,\epsilon}) \underset{\tau \to 0}{\to} \theta(u^\epsilon)$ in $L^2(I;\Omega)$ gives us that, $\theta(u^\epsilon) = w_1^\epsilon \in L^\infty(I, L^2(\Omega))$.
\end{proof}

Equation \eqref{continuicereg} can be rewritten as
\begin{equation}
    \begin{aligned}
        &\int_I\langle \frac{\partial \theta(u^\epsilon)}{\partial t}, v\rangle dt  + \int_I(\nabla \Phi(u^\epsilon), \nabla v) + \epsilon (\nabla u^\epsilon,\nabla v)  \\&+ \bigg(\bar K(u^\epsilon), \nabla v \bigg)dt =\int_I\qty(\mathcal{S}(u^\epsilon),v)dt, \forall v \in X,
    \end{aligned}
    \label{continuousregphi}
\end{equation}
with $\Phi$ defined in \eqref{defphi}.

To establish the existence of a solution to the original problem, we must pass to the limit as the regularization parameter $\epsilon \to 0$. This convergence analysis relies on compactness arguments, which require that the sequence of regularized solutions $\{u^\epsilon\}_{\epsilon > 0}$ remains bounded in specific functional spaces. The following proposition establishes these essential uniform a priori estimates, independent of $\epsilon$.
\begin{proposition}
    Let $\epsilon>0$, and let $u^\epsilon$ be a solution of \eqref{continuousregphi}. Then there exists a constant $C$ such that
    \begin{equation}
        \left\|\frac{\partial \theta(u^\epsilon)}{\partial t }\right\|_{X'} + \|\theta(u^\epsilon)\|_{L^\infty(I; L^2(\Omega))}+\|\Phi(u^\epsilon)\|_X + \|\sqrt{\epsilon} u^\epsilon\|_X \leq C.
    \end{equation}
    \label{propositionuniformbounds}
\end{proposition}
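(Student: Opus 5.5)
The plan is to derive energy estimates independent of $\epsilon$ by testing \eqref{continuousregphi} with two different functions: first $v=\Phi(u^\epsilon)$, which yields the bound on $\Phi(u^\epsilon)$ in $X$ and on $\sqrt\epsilon\,u^\epsilon$ in $X$; then $v=\theta(u^\epsilon)$, or equivalently the discrete analogue inherited from Proposition \ref{proptimedis}, for the $L^\infty(I;L^2(\Omega))$ bound. The time derivative bound will then follow by duality from the equation itself.

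\textbf{Step 1: $\Phi$ test.} We take $v=\Phi(u^\epsilon)\mathbf 1_{(0,t)}$, which lies in $X$ since $\Phi$ is $C^1$ with bounded derivative and $\Phi(0)=0$. Define $\Psi(s)=\int_0^s \Phi(\xi)\theta'(\xi)\,d\xi\ge 0$. A chain rule of Alt--Luckhaus type, which should also be justified at the discrete level by convexity as for $T_1$ in Proposition \ref{proptimedis} and then passed to the limit, gives
\[
\int_0^t\langle \partial_t\theta(u^\epsilon),\Phi(u^\epsilon)\rangle\,ds=\int_\Omega\Psi(u^\epsilon(t))-\int_\Omega\Psi(u_0).
\]
The initial term is bounded because $\Psi(s)\le ML_\theta s^2/2$. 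The diffusion terms give $\|\nabla\Phi(u^\epsilon)\|^2+\epsilon (K(u^\epsilon)\nabla u^\epsilon,\nabla u^\epsilon)\ge\|\nabla\Phi\|^2$. This alone does not control $\epsilon\|\nabla u^\epsilon\|^2$, so we will additionally test with $v=u^\epsilon$. There, $\langle\partial_t\theta(u),u\rangle$ integrates to $\int\int_0^{u}\theta'(s)s\,ds\ge0$, while the diffusion terms produce $(K\nabla u,\nabla u)+\epsilon\|\nabla u\|^2$. For the flux term we use \ref{H3}(a):
\[
|(\bar K(u),\nabla u)|=|(K(u)\bar K_1,\nabla u)|\le M\,\|\sqrt{K(u)}\|\,\|\sqrt{K(u)}\nabla u\|\le \tfrac12 (K\nabla u,\nabla u)+C.
\]
This factorization is exactly what removes the $1/\epsilon$ present in Proposition \ref{proptimedis}. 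The source term is bounded by $M_S|\Omega|^{1/2}\|u^\epsilon\|$, and $\|u^\epsilon\|\le \alpha^{-1}\|\theta(u^\epsilon)\|$ by Corollary \ref{Cortimedisc}.

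\textbf{Step 2: closing with Gronwall.} Since $\int_0^u\theta'(s)s\,ds\ge c\,\theta(u)^2$ fails in general, we will instead obtain the $L^\infty L^2$ bound by testing with $v=\theta(u^\epsilon)$, whose time term is exactly $\tfrac12\|\theta(u^\epsilon(t))\|^2$. The diffusion part contributes $((K+\epsilon)\theta'\nabla u,\nabla u)\ge0$. For the flux we use $|(K\bar K_1\theta'(u),\nabla u)|\le ML_\theta\|\sqrt K\|\,\|\sqrt K\nabla u\|$, which is controlled by the quantity $\int_I(K\nabla u,\nabla u)$ from the $u$-test, and the source gives $M_S\|\theta(u)\|$. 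Combining the two tests, absorbing with Young's inequality, and applying Gronwall's inequality to $y(t)=\|\theta(u^\epsilon(t))\|^2$ yields uniform bounds on $\|\theta(u^\epsilon)\|_{L^\infty L^2}$, on $\int_I(K\nabla u^\epsilon,\nabla u^\epsilon)$, and on $\epsilon\int_I\|\nabla u^\epsilon\|^2$. Then $|\nabla\Phi(u)|^2=K^2|\nabla u|^2\le M K|\nabla u|^2$ gives the bound on $\Phi(u^\epsilon)$ in $X$.

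\textbf{Step 3: time derivative.} From \eqref{continuousregphi}, for $v\in X$,
\[
\Bigl|\int_I\langle\partial_t\theta(u^\epsilon),v\rangle\Bigr|\le \Bigl(\|\Phi(u^\epsilon)\|_X+\sqrt\epsilon\,\|\sqrt\epsilon u^\epsilon\|_X+M|\Omega|^{1/2}T^{1/2}+C_PM_S|\Omega|^{1/2}T^{1/2}\Bigr)\|v\|_X,
\]
which is uniform for $\epsilon\le1$.

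The main obstacle is rigor in the time-derivative pairings $\langle\partial_t\theta(u),u\rangle$ and $\langle\partial_t\theta(u),\theta(u)\rangle$, since only $\partial_t\theta(u)\in X'$ is known. The first thing to try is to derive all estimates at the discrete level in \eqref{regulirisedsemidisc}, using the convexity inequality already used for $T_1$ and the algebraic identity used for $T_4$. The factorization bound on the flux is applied there with $K(u_{k-1})$ on both sides, since the weight in the scheme is $K(u_{k-1}^\epsilon)$. The resulting $\epsilon$-independent bounds are then passed to the limit $\tau\to0$ by weak lower semicontinuity, exactly as in Theorem \ref{regsolconvintime}.
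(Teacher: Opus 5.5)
Your proof is correct, but it distributes the work differently from the paper.

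In the paper the $\theta(u^\epsilon)$ test closes on its own. The flux $(K\bar K_1,\theta'(u^\epsilon)\nabla u^\epsilon)$ is bounded by $C\|\sqrt{K\theta'(u^\epsilon)}\nabla u^\epsilon\|$ and absorbed into the diffusion term $((K+\epsilon)\theta'\nabla u^\epsilon,\nabla u^\epsilon)$, which you discard as merely nonnegative. A separate $\Phi(u^\epsilon)$ test, using Carrillo's chain rule, then gives $\|\Phi(u^\epsilon)\|_X$. Finally the $u^\epsilon$ test gives $\|\sqrt\epsilon u^\epsilon\|_X$: there the flux is bounded by $M|\Omega|^{1/2}\|\Phi(u^\epsilon)\|_X$ and the term $(K\nabla u^\epsilon,\nabla u^\epsilon)$ is thrown away.

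You instead absorb the flux inside the $u^\epsilon$ test via $\bar K=K\bar K_1$. This costs a coupled Gronwall argument on $(0,t)$ with the $\theta$ test, but it yields the stronger bound $\int_I\|\sqrt{K(u^\epsilon)}\nabla u^\epsilon\|^2\le C$. Because $K^2\le MK$, this already contains the $\Phi$ estimate, so your Step 1 $\Phi$ test (and Carrillo's lemma) is in fact superfluous.

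Your side remark that $\int_0^u\theta'(s)s\,ds\ge c\,\theta(u)^2$ fails in general is incorrect. Since $\theta(s)$ has the sign of $s$ and $|\theta(s)|\le L_\theta|s|$, one has $\theta'(s)s\ge\theta'(s)\theta(s)/L_\theta$. Integrating gives the inequality with $c=1/(2L_\theta)$. With it, your $u^\epsilon$ test alone closes by Gronwall, and the $\theta$ test becomes unnecessary as well.

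Finally, your discrete-level fallback would only give the bounds for the particular solution constructed in Theorem~\ref{regsolconvintime}, by lower semicontinuity in the limit $\tau\to0$. It does not cover an arbitrary solution of the regularized problem, which is what the statement asserts. That is enough for Theorem~\ref{theorem1}. Your main argument, like the paper's, rests on the continuous chain-rule lemmas anyway.
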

\begin{proof}
    Testing \eqref{continuousregphi} with $v=\theta(u^\epsilon) 1_{\{0<t<s\}}\in X$ yields
    \begin{equation}
        \begin{aligned}
            &\int_0^s\langle \frac{\partial \theta(u^\epsilon)}{\partial t}, \theta(u^\epsilon)\rangle dt + \int_0^s ((K(u^\epsilon)+\epsilon)\theta'(u^\epsilon)\nabla u^\epsilon, \nabla u^\epsilon)dt\\& + \int_0^s \bigg(\bar K_1(u^\epsilon), K(u^\epsilon)\theta'(u^\epsilon)\nabla u^\epsilon\bigg)-\qty(\mathcal{S}(u^\epsilon),\theta(u^\epsilon)) dt =0, \forall v \in X,.
        \end{aligned}
    \end{equation}
    This is decomposed as
    \begin{equation}
        T_1+T_2+T_3 =0.
    \end{equation}
    By Lemma 64.40 in \cite{Ern2021}, $T_1$ is written as
    \begin{equation}
        T_1 = \frac{1}{2}\|\theta(u^\epsilon(s))\|^2 - \frac{1}{2}\|\theta(u^\epsilon(0))\|^2.
    \end{equation}
    For $T_3$, Cauchy's and Young's inequalities provide
    \begin{equation}
        \begin{aligned}
            -T_3 &\leq\int_0^s ML_{\theta}^{\frac 12}|\Omega|^{\frac 12}\|\sqrt{K(u^\epsilon)\theta'(u^\epsilon)} \nabla u^\epsilon\| + M_S |\Omega|^{\frac 12}\|\theta(u^\epsilon)\|dt \\&\leq \frac{(M^2L_\theta + M_S) |\Omega|T}{2}+\frac{1}{2}\int_0^s\|\sqrt{K(u^\epsilon)\theta'(u^\epsilon)} \nabla u^\epsilon\|^2dt+\frac 12 \int_0^s \|\theta(u^\epsilon)\|^2dt.
        \end{aligned}
    \end{equation}
    Regarding $T_2$, we have
    \begin{equation}
        T_2 \geq \int_0^s\|\sqrt{K(u^\epsilon)\theta'(u^\epsilon)} \nabla u^\epsilon\|^2dt \geq 0.
    \end{equation}
    These inequalities imply
    \begin{equation}
        \|\theta(u^\epsilon(s))\|^2 \leq (M^2L_\theta + M_S)|\Omega|T + \|\theta(u_0)\|^2+\int_0^s \|\theta(u^\epsilon)\|^2dt.
    \end{equation}
    By Grönwall's inequality we have that for $t\in (0,T)$
    \begin{equation}
        \begin{aligned}
            \|\theta(u^\epsilon(t))\|^2 &\leq ((M^2L_\theta + M_S)|\Omega|T + \|\theta(u_0)\|^2)\exp(t)\\
            &\leq ((M^2L_\theta + M_S)|\Omega|T + \|\theta(u_0)\|^2)\exp(T)
        \end{aligned}
    \end{equation}
    Testing with $v=\Phi(u^\epsilon)$ gives
    \begin{equation}
        \begin{aligned}
            \int_I \langle \frac{\partial \theta(u^\epsilon)}{\partial t}, \Phi(u^\epsilon)\rangle dt &+ \int_I \|\nabla \Phi(u^\epsilon)\|^2 + \epsilon(K(u^\epsilon)\nabla u^\epsilon, \nabla u^\epsilon)dt\\ &+\int_I\bigg(\bar K(u^\epsilon), \nabla \Phi(u^\epsilon)\bigg) - \qty(\mathcal S(u^\epsilon), \Phi(u^\epsilon)) dt=0. 
        \end{aligned}
    \end{equation}
    This is rewritten as
    \begin{equation}
        T_4+T_5+T_6=0.
    \end{equation}
    Using Lemma 4 in \cite{Carrillo1999} (p.324), $T_4$ becomes
    \begin{equation}
        T_4=\int_\Omega \varphi(u^\epsilon(T)) -\varphi(u_0)d\textbf{x},
    \end{equation}
    where 
    \begin{equation}
        \varphi(s) = \int_0^s \Phi(\xi) \theta'(\xi)d\xi.
    \end{equation}
    Since $\theta' \ge 0$ and $\Phi$ has the same sign as for $\xi\in \mathbb R$, $\varphi \geq 0$. Thus, by the first part of Lemma 4 in \cite{Carrillo1999} (p.324):
    \begin{equation}
        T_4 \ge -\int_\Omega\varphi(u(0))d\textbf{x}=:-C_0 > -\infty.
    \end{equation}
    For $T_5$, we observe that
    \begin{equation}
        T_5 \geq \|\Phi(u^\epsilon)\|^2_X,
    \end{equation}
    and for $T_6$,
    \begin{equation}
        -T_6 \leq \frac{(M+C_PM_S)^2|\Omega|T}{2} + \frac{1}{2}\|\Phi(u^\epsilon)\|^2_X.
    \end{equation}
    Therefore,
    \begin{equation}
        \|\Phi(u^\epsilon)\|^2_X \leq(M+C_PM_S)^2|\Omega|T + 2C_0.
    \end{equation}
    Testing the equation with $v=u^\epsilon$, the first term is treated via Lemma 1.5 in \cite{AltLuckhaus1983} as
    \begin{equation}
        \int_I \langle \frac{\partial \theta(u^\epsilon)}{\partial t}, u^\epsilon\rangle dt = \int_\Omega \Theta(u^\epsilon(T)) - \Theta(u(0)) d\textbf{x} \geq -\int_\Omega  \Theta(u_0) d\textbf{x}=-C_1,
    \end{equation}
    where
    \begin{equation}
        \Theta(s) = \int_0^s \xi \theta'(\xi) d\xi \ge 0.
    \end{equation}
    The second term satisfies
    \begin{equation}
        \int_I (K(u^\epsilon) \nabla u^\epsilon, \nabla u^\epsilon) + \epsilon\|\nabla u^\epsilon\|^2 dt \geq \epsilon\| u^\epsilon\|^2_X,
    \end{equation}
    and the last term is bounded by
    \begin{equation}
        \int_I -\bigg(\bar K_1(u^\epsilon), {K(u^\epsilon)}\nabla u^\epsilon\bigg) + \qty(\mathcal S(u^\epsilon), u^\epsilon)dt \leq |\Omega|^{\frac 12}(M \|\Phi(u^\epsilon)\|_X +M_S\|u^\epsilon\|) \leq C_3.
    \end{equation}
    Consequently,
    \begin{equation}
         \|\sqrt{\epsilon}u^\epsilon\|_X \leq C_4.
    \end{equation}
    We have
    \begin{equation}
        \begin{aligned}
            \int_I \bigg\langle \frac{\partial\theta(u^\epsilon)}{\partial t}, v\bigg\rangle dt = &\int_I-(\nabla \Phi(u^\epsilon), \nabla v) - \sqrt{\epsilon} (\sqrt{\epsilon}\nabla u^\epsilon,\nabla v) \\& - \bigg(\bar K(u^\epsilon), \nabla v \bigg)+\qty(\mathcal{S}(u^\epsilon),v)dt, \forall v \in X.
        \end{aligned}
    \end{equation}
    Taking the absolute value, dividing by $\|v\|_X$, and taking the supremum over $v$ yields:
    \begin{equation}
        \left\|\frac{\partial\theta(u^\epsilon)}{\partial t}\right\|_{X'} \leq \|\Phi (u^\epsilon) \| + \sqrt{\epsilon}\|\sqrt{\epsilon}\nabla u^\epsilon \| + \|\bar K(u^\epsilon)\|+\|\mathcal{S}(u^\epsilon)\| \leq C \text{ for } \epsilon \to 0. 
    \end{equation}
\end{proof}
In the next step, we use the bounds of Proposition \ref{propositionuniformbounds} to extract convergent subsequences and pass to the limit $\epsilon\to 0$, thereby proving Theorem \ref{theorem1}.

\begin{proof}
    Proof of Theorem \ref{theorem1}.
    
    Proposition \ref{propositionuniformbounds} ensures that $\theta(u^\epsilon)$ and $u^\epsilon$ are uniformly bounded in $L^\infty(I, L^2(\Omega)) \subset L^2(I; L^2(\Omega))$, $\frac{\partial \theta(u^\epsilon)}{\partial t}$ is uniformly bounded in $X'$, and $\Phi(u^\epsilon)$ is uniformly bounded in $X$. Applying Theorem 1 in \cite{Moussa2016}, we extract a subsequence such that
    \begin{align}
        \theta(u^\epsilon) &\underset{\epsilon \to 0}{\to} w \quad \text{strongly in } L^2(I; L^2(\Omega)), \\
        \frac{\partial \theta(u^\epsilon)}{\partial t} &\underset{\epsilon \to 0}{\rightharpoonup} \frac{\partial w}{\partial t} \quad \text{weakly in } X',
    \end{align}
    and 
    \begin{align}
        \Phi(u^\epsilon) &\underset{\epsilon \to 0}{\rightharpoonup} v \quad \text{weakly in } L^2(I; L^2(\Omega)),\\
        \nabla \Phi(u^\epsilon) &\underset{\epsilon \to 0}{\rightharpoonup} \nabla v \quad \text{weakly in } L^2(I; L^2(\Omega)).
    \end{align}
    By Proposition \ref{propositionuniformbounds}, we have $u^\epsilon \underset{\epsilon \to 0}{\rightharpoonup} u$ weakly in $L^2(I;L^2(\Omega))$. Consequently, Lemma~\ref{Lemmabres} implies that $w = \theta(u)$. Using Assumption~\ref{H1}, we obtain the strong convergence $u^\epsilon \underset{\epsilon \to 0}{\to} u$ in $L^2(I; L^2(\Omega))$, which implies convergence almost everywhere in $\Omega \times I$ (up to a subsequence). Since $\bar K$ and $\mathcal S$ are bounded and continuous, the Lebesgue Dominated Convergence Theorem yields
\begin{equation}
    \begin{aligned}
        &\bar K(u^\epsilon) \underset{\epsilon \to 0}{\to} \bar K(u),\\
        &\mathcal S(u^\epsilon) \underset{\epsilon \to 0}{\to} \mathcal S(u)
    \end{aligned}
\end{equation}
in $L^2(\Omega \times I)$. Applying Lemma~\ref{Lemmabres} again allows us to identify $v = \Phi(u)$.

Furthermore, observing that $\sqrt{\epsilon}(\sqrt{\epsilon}\nabla u^\epsilon) \underset{\epsilon \to 0}{\to} 0$, we pass to the limit as $\epsilon \to 0$ in \eqref{continuousregphi} to recover \eqref{uform}. Finally, since $\theta(u^\epsilon)$ and $u^\epsilon$ are uniformly bounded in $L^{\infty}(I;L^2(\Omega))$, we conclude that
\[
    \theta(u),\, u \in L^{\infty}(I; L^2(\Omega)).
\]
\end{proof}
We also demonstrate that $\theta(u) \in C_w( I; L^2(\Omega))$, where $C_w( I; L^2(\Omega))$ denotes the space of functions $u: t\mapsto u(t) \in L^2(\Omega)$ such that for all $v\in L^2(\Omega)$, the map $t \mapsto (u(t),v) \in C(\bar I)$.

\begin{proposition}
    If $y \in L^\infty(I; L^2(\Omega))$ and $\frac{\partial y}{\partial t} \in X'$, then $y \in C_w(I; L^2(\Omega))$.
    \label{weakcontinuity}
\end{proposition}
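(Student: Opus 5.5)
The plan is the standard two-step argument. First, $y$ is (after modification on a null set) continuous into $\mathcal H'$. Then the $L^\infty(I;L^2(\Omega))$ bound together with density of $\mathcal H$ in $L^2(\Omega)$ upgrades this to weak continuity in $L^2(\Omega)$. Throughout we use the Gelfand triple $\mathcal H\subset L^2(\Omega)\subset \mathcal H'$, in which $L^2(\Omega)$ is identified with its dual. For $g\in L^2(\Omega)$ and $v\in\mathcal H$ this gives $\langle g,v\rangle_{\mathcal H'\times\mathcal H}=(g,v)$.

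\textbf{Step 1 (continuity in $\mathcal H'$).} Since $y\in L^\infty(I;L^2(\Omega))\subset L^2(I;\mathcal H')$ and $\partial_t y\in X'=L^2(I;\mathcal H')$, we have $y\in H^1(I;\mathcal H')$. By the standard vector-valued Sobolev embedding, $y$ has a representative in $C(\bar I;\mathcal H')$ satisfying
\[
y(t)-y(s)=\int_s^t \frac{\partial y}{\partial t}(r)\,dr \quad\text{in } \mathcal H'.
\]
Concretely, one defines $\tilde y(t)=y(t_0)+\int_{t_0}^t \partial_t y\,dr$ for a Lebesgue point $t_0$. One then checks that $\tilde y=y$ a.e. by testing against $\phi(t)v$ with $\phi\in C_c^\infty(I)$ and $v\in\mathcal H$, which is exactly the definition of the distributional time derivative. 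Consequently, for every $v\in\mathcal H$ the map $t\mapsto\langle y(t),v\rangle$ is continuous, and in fact $\tfrac12$-Hölder with constant $\|\partial_t y\|_{X'}\|v\|_1$.

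\textbf{Step 2 ($y(t)\in L^2(\Omega)$ for every $t$, with a uniform bound).} Let $M_y=\|y\|_{L^\infty(I;L^2)}$ and let $E\subset\bar I$ be the full-measure set on which $\|y(t)\|\le M_y$. Fix any $t\in\bar I$ and pick $t_k\in E$ with $t_k\to t$. The sequence $y(t_k)$ is bounded in $L^2(\Omega)$, so a subsequence converges weakly in $L^2(\Omega)$ to some $z$. Testing with $v\in\mathcal H$ and using Step 1, $(z,v)=\lim\langle y(t_k),v\rangle=\langle y(t),v\rangle$. Hence $y(t)=z\in L^2(\Omega)$ and, by weak lower semicontinuity, $\|y(t)\|\le M_y$ for all $t\in\bar I$. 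I expect this step to be the main subtlety: the continuous representative must be identified as an $L^2$-valued function at every time, not just almost every time, with a uniform bound.

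\textbf{Step 3 (weak continuity against all of $L^2$).} Let $v\in L^2(\Omega)$ and $\eta>0$. By density of $C_c^\infty(\Omega)\subset\mathcal H$ in $L^2(\Omega)$, choose $v_\eta\in\mathcal H$ with $\|v-v_\eta\|\le\eta$. Then
\[
|(y(t)-y(s),v)|\le |\langle y(t)-y(s),v_\eta\rangle| + 2M_y\eta .
\]
The first term tends to zero as $s\to t$ by Step 1. Letting $s\to t$ and then $\eta\to 0$ shows that $t\mapsto (y(t),v)$ is continuous on $\bar I$, i.e. $y\in C_w(I;L^2(\Omega))$.

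Applying this result with $y=\theta(u)$ uses the regularity of $\theta(u)$ obtained in the proof of Theorem~\ref{theorem1}.
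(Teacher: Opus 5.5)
Your argument is correct. It shares the paper's two-stage skeleton: continuity of $t\mapsto (y(t),v)$ for $v\in\mathcal H$ from an $H^1$-in-time bound, then extension to $v\in L^2(\Omega)$ by density using the $L^\infty(I;L^2(\Omega))$ bound. You implement the first stage differently.

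The paper argues scalar by scalar. For each fixed $v\in\mathcal H$ it shows $f_v(t)=(y(t),v)$ lies in $H^1(I)\hookrightarrow C(\bar I)$, with $f_v'=\langle \partial_t y,v\rangle$. It then gets $f_v\in C(\bar I)$ for $v\in L^2(\Omega)$ as a uniform limit, using $\|y(t)\|\le \|y\|_{L^\infty(I;L^2(\Omega))}$ as if it held at every $t$. This is shorter, but taken literally it has two weaknesses:
\begin{itemize}
\item it only yields, for each $v$, a continuous representative of $f_v$, modified on a $v$-dependent null set;
\item the pointwise bound is only available for a.e.\ $t$.
\end{itemize}
So it does not by itself produce a single function $t\mapsto y(t)\in L^2(\Omega)$ that is weakly continuous against all $v$ at once.

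You instead use the vector-valued embedding $H^1(I;\mathcal H')\hookrightarrow C(\bar I;\mathcal H')$, which fixes one representative from the start. Your Step 2 then shows $y(t)\in L^2(\Omega)$ with $\|y(t)\|\le M_y$ for \emph{every} $t$, via weak compactness and weak lower semicontinuity. That is exactly what makes the density step legitimate.

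Your route costs one extra step and a vector-valued Sobolev lemma, but it proves the statement in its proper sense. The paper's proof is a compressed version of the same idea that leaves the choice of representative implicit.
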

\begin{proof}
    Let $v \in \mathcal{H}$. We define the scalar function
    \begin{equation}
        f_v(t) = (y(t), v).
    \end{equation}
    According to Proposition 64.33 in \cite{Ern2021}, the weak derivative is given by
    \begin{equation}
        f_v'(t) = \bigg\langle \frac{\partial y}{\partial t}(t), v \bigg\rangle_{\mathcal{H}'\times \mathcal{H}}.
    \end{equation}
    We observe the following bounds:
    \begin{equation}
        \|f_v\|_{L^2(I)} = \sqrt{\int_I |(y(t), v)|^2 dt} \leq \|y\|_{L^2(I; L^2(\Omega))} \|v\| < \infty, 
    \end{equation}
    and
    \begin{equation}
        \|f_v'\|_{L^2(I)} = \sqrt{\int_I |f'_v(t)|^2 dt} \leq \left\|\frac{\partial y}{\partial t}\right\|_{X'} \|v\|_1 < \infty.
    \end{equation}
    These estimates imply that $f_v \in H^1(I)$. By the Sobolev embedding theorem in one dimension, $H^1(I) \hookrightarrow C(\overline{I})$, so $f_v$ is continuous.
    
    To extend this to $v \in L^2(\Omega)$, we use a density argument. Since $\mathcal{H}$ is dense in $L^2(\Omega)$, there exists a sequence $\{v_n\} \subset \mathcal{H}$ such that $v_n \to v$ in $L^2(\Omega)$. By the Cauchy-Schwarz inequality, we have
    \begin{equation}
        |f_{v_n}(t) - f_v(t)| = |(y(t), v_n - v)| \leq \|y(t)\| \|v_n - v\|.
    \end{equation}
    Since $y \in L^\infty(I; L^2(\Omega))$, we can take the supremum over time:
    \begin{equation}
        \|f_{v_n} - f_v\|_{C(\overline{I})} \leq \|y\|_{L^\infty(I; L^2(\Omega))} \|v_n - v\|.
    \end{equation}
    As $n \to \infty$, the right-hand side vanishes. Thus, $f_{v_n}$ converges uniformly to $f_v$. Since the uniform limit of continuous functions is continuous, $f_v \in C(\overline{I})$. Consequently, the map $t \mapsto (y(t), v)$ is continuous for all $v \in L^2(\Omega)$, which means $y \in C_w(I; L^2(\Omega))$.
\end{proof}
By Proposition \ref{weakcontinuity}, $\theta(u)\in C_w(I;L^2(\Omega))$, ensuring the trace is well defined and satisfying
\begin{equation}
    \int_0^s \bigg\langle \frac{\partial \theta(u)}{\partial t}, v\bigg\rangle dt = (\theta(u(s)),v) - (\theta(u_0),v), \forall v \in \mathcal{H}.
\end{equation}

\section{Maximum Principle}
In addition to existence and uniqueness, it is crucial to ensure that the mathematical model preserves the physical bounds of the saturation variable. In this section, we establish an $L^\infty$-estimate for the weak solution, guaranteeing that the solution remains bounded throughout the time evolution provided the initial and boundary data is bounded.
\begin{theorem}[Maximum Principle]
Let $u$ be a weak solution to the Richards equation \eqref{uform} for $\mathcal{S} =0$ with initial data $u_0$ and homogeneous boundary conditions. Assume there exists constants $\lambda_1\geq \lambda_0 \ge 0$ such that $\lambda_0 \le u_0(\textbf{x}) \le \lambda_1$ almost everywhere in $\Omega$, and that $\bar K$ does not depend on space and time. Then, the solution satisfies:
\begin{equation}
    0 \le u(\textbf{x},t) \le \lambda_1 \quad \text{for a.e. } (\textbf{x},t) \in \Omega \times I.
\end{equation}
\label{maxprincipe}
\end{theorem}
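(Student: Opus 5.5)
Both bounds will come from Stampacchia-type truncation: test the weak formulation \eqref{mainequation} (with $\mathcal S=0$) against the positive part of the excess over the bound, and show that the excess vanishes.

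\emph{Upper bound.} Take $v=(\Phi(u)-\Phi(\lambda_1))^+\mathbf 1_{(0,s)}$. This is admissible: $\Phi(u)\in X$, $\Phi$ is Lipschitz and nondecreasing, and the boundary condition $u=0\le\lambda_1$ gives $v\in X$. Since $\Phi$ is nondecreasing, $v$ is supported in $\{u>\lambda_1\}$, and the same truncation of $\theta$ is the natural energy. The time term is handled by the chain-rule lemma of Alt--Luckhaus / Carrillo (Lemma 4 in \cite{Carrillo1999}), as in Proposition \ref{propositionuniformbounds}:
\[
\int_0^s\Big\langle \partial_t\theta(u),v\Big\rangle\,dt=\int_\Omega \psi(u(s))\,d\textbf{x}-\int_\Omega\psi(u_0)\,d\textbf{x},
\qquad \psi(r)=\int_{\lambda_1}^{r}\big(\Phi(\xi)-\Phi(\lambda_1)\big)^+\theta'(\xi)\,d\xi \ge 0 .
\]
Because $u_0\le\lambda_1$, we have $\psi(u_0)=0$. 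The diffusion term equals $\int_0^s\|\nabla v\|^2\,dt\ge0$. For the gravity term we use that $\bar K$ is independent of $(\textbf{x},t)$, so $\bar K(u)\cdot\nabla v=\nabla\cdot G(u)$ on the support of $v$. The convenient choice is to write $\bar K(u)\cdot\nabla v$ with $\nabla v=\nabla\Phi(u)\mathbf 1_{\{u>\lambda_1\}}$, which gives $\nabla\cdot\mathcal G(\Phi(u))$ for $\mathcal G(r)=\int_{\Phi(\lambda_1)}^{r}\bar K(\Phi^{-1}(\rho))\mathbf 1_{\{\rho>\Phi(\lambda_1)\}}\,d\rho$. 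The function $\mathcal G(\Phi(u))$ lies in $W^{1,1}_0$, since it vanishes on $\partial\Omega$ where $\Phi(u)=0\le\Phi(\lambda_1)$, so its integral over $\Omega$ is zero. Hence $\int_\Omega\psi(u(s))\,d\textbf{x}\le0$, so $\psi(u(s))=0$ a.e. Strict monotonicity of $\theta$ then forces $u(s)\le\lambda_1$ wherever $\Phi(u)>\Phi(\lambda_1)$. On the remaining set $\{u>\lambda_1,\ \Phi(u)=\Phi(\lambda_1)\}$, where $K\equiv0$ on $[\lambda_1,u]$, I would instead test with $(\theta(u)-\theta(\lambda_1))^+$, or argue with a regularization.

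\emph{Main obstacle.} The honest route is to prove the bound at the level of the regularized problem \eqref{continuicereg} and pass to the limit $\epsilon\to0$, using the a.e.\ convergence $u^\epsilon\to u$ established in the proof of Theorem \ref{theorem1}; a.e.\ limits preserve the bounds. For $u^\epsilon$ the natural test function is $(u^\epsilon-\lambda_1)^+$, which lies in $X$. The time term is again $\int_\Omega\Theta_{\lambda_1}(u^\epsilon(s))\ge0$ with zero initial value. The diffusion term is $\int(K+\epsilon)|\nabla(u^\epsilon-\lambda_1)^+|^2\ge0$. The gravity term is $\int_\Omega\bar K(u^\epsilon)\cdot\nabla(u^\epsilon-\lambda_1)^+=\int_\Omega\nabla\cdot\mathbf G(u^\epsilon)=0$, where $\mathbf G(r)=\int_{\lambda_1}^{\max(r,\lambda_1)}\bar K$ and $\mathbf G(u^\epsilon)\in W^{1,1}_0$ by the chain rule for Lipschitz functions and the trace condition. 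The delicate step is justifying this divergence identity for merely Sobolev $u^\epsilon$. I would approximate $(u^\epsilon-\lambda_1)^+$ by smooth compactly supported functions and use continuity and boundedness of $\bar K$ from \ref{H3}; this is the step where the hypothesis that $\bar K$ does not depend on space and time is essential. The conclusion is $\Theta_{\lambda_1}(u^\epsilon(s))=0$, and hence $u^\epsilon\le\lambda_1$ because $\theta'$ is not identically zero on any interval, $\theta$ being strictly increasing.

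\emph{Lower bound.} The argument is symmetric with the test function $(u^\epsilon)^-=\max(-u^\epsilon,0)$. The initial term vanishes since $u_0\ge\lambda_0\ge0$, and the flux term vanishes by the same divergence argument with $\mathbf G(r)=\int_0^{\min(r,0)}\bar K$. Letting $\epsilon\to0$ along the subsequence gives $0\le u\le\lambda_1$ a.e. in $\Omega\times I$.
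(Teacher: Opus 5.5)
\textbf{Gap: how the leftover set is disposed of.} Your direct argument is the paper's proof. It uses the same test function $[\Phi(u)-\Phi(\lambda_1)]_+\mathbf 1_{(0,s)}$, the same energy from Lemma 4 of \cite{Carrillo1999} (your $\psi$ is the paper's $\mathcal E$), and the same divergence treatment of the gravity term. The gap is in how you handle the leftover set $\{u>\lambda_1,\ \Phi(u)=\Phi(\lambda_1)\}$: neither of your fixes proves the statement as posed, which concerns an \emph{arbitrary} weak solution.

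\begin{itemize}
\item The test function $(\theta(u)-\theta(\lambda_1))^+$ is not admissible. The definition of weak solution gives only $\Phi(u)\in X$, and nothing places $\theta(u)$ or $u$ in $L^2(I;\mathcal H)$; the a priori estimates control only $\sqrt{\epsilon}\,u^\epsilon$ in $X$.
\item The regularization route is correct as far as it goes. For $u^\epsilon\in X$ every step is standard, and you can even read off $(u^\epsilon-\lambda_1)^+=0$ directly from the term $\epsilon\int_0^s\|\nabla(u^\epsilon-\lambda_1)^+\|^2\,dt$. However, passing to the a.e.\ limit gives the bound only for the particular solution constructed in the proof of Theorem \ref{theorem1}. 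No uniqueness for \eqref{uform} is available in the paper (one would need something like an $L^1$-contraction argument), so this does not reach an arbitrary weak solution $u$.
\end{itemize}

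\textbf{The missing observation.} The leftover set is negligible by \ref{H3}. If $u>\lambda_1$ and $\Phi(u)=\Phi(\lambda_1)$, then $\int_{\lambda_1}^{u}K(\xi)\,d\xi=0$ with $K\ge 0$ continuous. Hence $K$ vanishes on the whole nondegenerate interval $[\lambda_1,u]$, and in particular $K(u)=0$ there. Assumption \ref{H3}(b) (the zero set of $K$ is negligible) rules this out, so $\Phi$ is strictly increasing and the set is empty, or at least null.

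This strict monotonicity is exactly what the paper uses in its first line, where it asserts that $0\le u\le\lambda_1$ is equivalent to $0\le\Phi(u)\le\Phi(\lambda_1)$. It is also already presupposed by your own gravity-term construction, which uses $\Phi^{-1}$. With it, $\psi(u(s))=0$ a.e.\ immediately gives $u(s)\le\lambda_1$ a.e. The lower bound follows the same way with $-[\Phi(u)]_-$, whose leftover set $\{u<0,\ \Phi(u)=0\}$ is excluded for the same reason.

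The regularization detour is then unnecessary. Its only real advantage is that it would survive if $K$ were allowed to vanish on an interval. The cost is that it proves the maximum principle only for the constructed solution, not for every weak solution.
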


\begin{proof}
    Since $\Phi$ is increasing with $\Phi(0) = 0$, proving 
    \begin{equation}
    0 \le u(\textbf{x},t) \le \lambda_1 \quad \text{for a.e. } (\textbf{x},t) \in \Omega \times I,
    \end{equation}
    is equivalent to proving that 
    \begin{equation}
        0 \le \Phi(u(\textbf{x},t)) \le \Phi(\lambda_1) =: \beta \quad \text{for a.e. } (\textbf{x},t) \in \Omega \times I.
    \end{equation}
    
    \textbf{Step 1: Upper Bound.}
    Let $v = [\Phi(u) - \beta]_+$, where $[\cdot]_+$ denotes the non-negative part function. We have $v \in \mathcal{H}$ because $\Phi(u) \in L^2(I; \mathcal{H})$. Furthermore, since $u=0$ on $\partial \Omega$ and $\beta \ge 0$, we have $v = 0$ on the boundary. Moreover, the gradient is given by:
    \begin{equation}
        \nabla v = \chi_{\{\Phi(u) > \beta \}} \nabla \Phi(u).
    \end{equation}
    Testing \eqref{mainequation} with $v\chi_{[0,t]}$, where $t \in [0,T]$, yields:
    \begin{equation}
        \int_0^t \left\langle\frac{\partial \theta(u)}{\partial t}, [\Phi(u) - \beta]_+\right\rangle_{\mathcal{H}' \times \mathcal{H}}ds + \int_0^t (\nabla \Phi (u), \nabla v) ds + \int_0^t \left(\bar K(u), \nabla v \right) ds = 0.
    \end{equation}
    This can be written as
    \begin{equation}
        I_1 + I_2 + I_3 = 0,
    \end{equation}
    where 
    \begin{equation}
        \begin{aligned}
            I_1 &= \int_0^t \left\langle\frac{\partial \theta(u)}{\partial t}, [\Phi(u) - \beta]_+\right\rangle_{\mathcal{H}' \times \mathcal{H}}ds,\\
            I_2 &= \int_0^t \int_\Omega |\nabla v|^2 d\textbf{x} ds,\\
            I_3 &= \int_0^t \left(\bar K(u), \nabla v \right) ds.
        \end{aligned}
    \end{equation}
    For $I_1$, we use Lemma 4 in \cite{Carrillo1999} to obtain
    \begin{equation}
        I_1 = \int_\Omega \mathcal{E}(u(t)) d\textbf{x} - \int_\Omega \mathcal{E}(u_0) d\textbf{x},
    \end{equation}
    where $\mathcal{E}$ is defined as
    \begin{equation}
    \mathcal{E}(s) = \int_{\lambda_1}^s \theta'(\xi) [\Phi(\xi) - \beta]_+ d\xi = 
        \begin{cases}
            \int_{\lambda_1}^s \theta'(\xi)(\Phi(\xi) - \beta) d\xi & \text{if } s > \lambda_1, \\
            0 & \text{if } s \le \lambda_1.
        \end{cases}
    \label{eq:energy}
    \end{equation}
    Since $u_0 \le {\lambda_1}$ a.e., we have $\mathcal{E}(u_0) = 0$. Thus, $I_1 = \int_\Omega \mathcal{E}(u(t)) d\textbf{x} \geq 0$. 
    
    For the second term, we have $I_2 \ge 0$.
    
    For the third term $I_3$, we observe that
    \begin{equation}
        \bar K(u) \cdot\nabla v  = \nabla \cdot  \mathcal{K}(u),
    \end{equation}
    where $\mathcal{K}$ is the primitive defined by:
    \begin{equation}
        \mathcal{K}(s) = 
        \begin{cases}
            \int_{\lambda_1}^s \bar K(\xi) K(\xi) d\xi & \text{if } s > \lambda_1, \\
            0 & \text{if } s \le \lambda_1.
        \end{cases}
    \end{equation}
    Applying Green's theorem, and noting that $u=0 \le \lambda_1$ on $\partial \Omega$ implies $\mathcal{K}(u)=0$ on the boundary, we get:
    \begin{equation}
        \begin{aligned}
            I_3 &= \int_0^t \int_\Omega \nabla \cdot\mathcal{K}(u) d\textbf{x} ds \\
            & = \int_0^t \int_{\partial \Omega} \mathcal{K}(u) \cdot\boldsymbol{n} d\sigma ds \\
            &=0.
        \end{aligned}
    \end{equation}
    Combining these results, we obtain
    \begin{equation}
        \int_\Omega \mathcal{E}(u(t)) d\textbf{x} + \int_0^t \|\nabla v\|_{L^2(\Omega)}^2 ds = 0.
    \end{equation}
    Both terms are non-negative, so we must have $\int_\Omega \mathcal{E}(u(t)) d\textbf{x} = 0$. Since the integrand is strictly positive whenever $u(t) > \lambda_1$, this implies $u(t) \le \lambda_1$ almost everywhere.

    \textbf{Step 2: Lower Bound.}
    The proof that $u \ge 0$ follows analogous arguments using the test function $v = -[\Phi(u)]_- = \min(0, \Phi(u))$.
\end{proof}
The Maximum Principle established in Theorem \ref{maxprincipe} for homogeneous boundary conditions naturally extends to the non-homogeneous case. Specifically, the solution is bounded globally by the extremal values of both the initial data $u_0$ and the boundary data $u_d$. This is formalized in the following corollary.

\begin{corollary}[Maximum Principle for Non-Homogeneous Boundary Conditions]
    Let $u$ be a weak solution to the Richards equation with non-homogeneous boundary conditions $u = u_d$ on $\partial \Omega \times I$. Define the global upper and lower bounds as:
    \begin{equation}
        M = \max\left( \operatorname*{ess\,sup}_{x \in \Omega} u_0(\textbf{x}), \ \operatorname*{ess\,sup}_{(\textbf{x},t) \in \partial \Omega \times I} u_d(\textbf{x},t) \right),
    \end{equation}
    \begin{equation}
        m = \min\left( \operatorname*{ess\,inf}_{x \in \Omega} u_0(\textbf{x}), \ \operatorname*{ess\,inf}_{(\textbf{x},t) \in \partial \Omega \times I} u_d(\textbf{x},t) \right).
    \end{equation}
    Then, the solution satisfies:
    \begin{equation}
        m \le u(\textbf{x},t) \le M \quad \text{for a.e. } (\textbf{x},t) \in \Omega \times I.
    \end{equation}
\end{corollary}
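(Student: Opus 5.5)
The plan is to rerun the argument of Theorem \ref{maxprincipe} with the homogeneous threshold $\beta$ replaced by $\beta_M := \Phi(M)$ for the upper bound and $\beta_m := \Phi(m)$ for the lower bound. We keep $\mathcal S = 0$ and take $\bar K$ independent of $(\textbf{x},t)$, as in the theorem. We also assume that the boundary datum $u_d$ is regular enough for the weak formulation to make sense with $\Phi(u) - \Phi(\tilde u_d) \in X$ for some extension $\tilde u_d$. The sign restrictions $\lambda_0 \ge 0$ and $\Phi(0)=0$ were used in Theorem \ref{maxprincipe} only to guarantee that the boundary trace lies below the threshold. Here that role is played directly by the definitions of $M$ and $m$.

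\textbf{Upper bound.} The test function is $v = [\Phi(u) - \beta_M]_+$. The key check is that $v(t) \in \mathcal H$ for a.e. $t$. Since $u_d \le M$ a.e. on $\partial\Omega\times I$ and $\Phi$ is increasing, the trace of $\Phi(u) - \beta_M$ is $\Phi(u_d) - \Phi(M) \le 0$. Truncation commutes with traces for $H^1$ functions, so the trace of $v$ vanishes. We also have $\nabla v = \chi_{\{\Phi(u) > \beta_M\}}\nabla \Phi(u)$, so the argument is legitimate even though $u - u_d \notin X$ in general.

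We test \eqref{mainequation} with $v\chi_{[0,t]}$ and split the result into $I_1 + I_2 + I_3 = 0$ as before.
\begin{itemize}
\item $I_1$: Lemma 4 of \cite{Carrillo1999} gives $I_1 = \int_\Omega \mathcal E_M(u(t)) - \mathcal E_M(u_0)$, where $\mathcal E_M(s) = \int_M^s \theta'(\xi)[\Phi(\xi)-\beta_M]_+d\xi$. This energy vanishes for $s\le M$, and $u_0 \le M$ a.e., so $\mathcal E_M(u_0)=0$.
\item $I_2$: we have $I_2 = \int_0^t\|\nabla v\|^2 \ge 0$.
\item $I_3$: we write $\bar K(u)\cdot\nabla v = \nabla\cdot \mathcal K_M(u)$, with $\mathcal K_M(s) = \int_M^{\max(s,M)}\bar K(\xi)K(\xi)\,d\xi$. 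Because $\mathcal K_M(u_d) = 0$ on the boundary, the divergence theorem gives $I_3 = 0$.
\end{itemize}
It follows that $\int_\Omega \mathcal E_M(u(t)) = 0$. Since $\mathcal E_M > 0$ on $(M,\infty)$, we get $u \le M$ a.e.

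\textbf{Lower bound.} This is symmetric. We use $v = -[\Phi(u) - \beta_m]_- = \min(0, \Phi(u)-\beta_m)$, whose trace vanishes because $u_d \ge m$. The corresponding energy is $\mathcal E_m(s) = \int_m^{\min(s,m)}\theta'(\xi)(\Phi(\xi) - \beta_m)\,d\xi \ge 0$, which vanishes for $s \ge m$ and is positive below $m$. The same three-term argument then yields $u \ge m$ a.e.

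\textbf{Main obstacle.} I expect the delicate step to be the rigorous justification of the boundary terms. This involves two points. First, the chain-rule/trace identity for $I_1$ via Carrillo's lemma must hold when $u$ itself is not in $X$ and only $\Phi(u) - \Phi(\tilde u_d)$ is. This needs $\Phi(\tilde u_d)$ to have enough time regularity (for instance $\partial_t \theta(\tilde u_d)$ integrable) so that the lemma still applies. Second, the divergence-theorem step in $I_3$ needs $\mathcal K_M(u) \in W^{1,1}$ with zero trace. Since $K$ may vanish, I would write $\nabla \mathcal K_M(u) = \bar K(u)\chi_{\{u>M\}}\nabla\Phi(u)$, which is in $L^2$ because $\bar K$ is bounded. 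Then $\mathcal K_M(u) = G(\Phi(u))$ for a Lipschitz $G$ vanishing on $(-\infty,\beta_M]$, which gives the trace property directly from that of $v$. If this route proved awkward, I would fall back to proving the estimate for the $\epsilon$-regularized problem \eqref{continuousregphi}, where $u^\epsilon$ has full $H^1$ regularity, and pass to the limit using the a.e. convergence $u^\epsilon\to u$ established in the proof of Theorem \ref{theorem1}.
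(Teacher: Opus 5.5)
Your proposal is correct and follows the paper's proof essentially step for step. You use the same thresholds $\Phi(M)$ and $\Phi(m)$, the same truncated test functions $[\Phi(u)-\Phi(M)]_+$ and $-[\Phi(u)-\Phi(m)]_-$, and the same three-term energy identity, with the initial energy vanishing because $u_0\le M$ (resp.\ $u_0\ge m$) and the trace vanishing because $u_d\le M$ (resp.\ $u_d\ge m$). Your extra justification of the zero trace of $\mathcal K_M(u)$ via a Lipschitz $G$, and your remark on the time regularity needed for Carrillo's lemma, go beyond what the paper writes but do not change the argument.
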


\begin{proof}
    The proof follows the same strategy as Theorem \ref{maxprincipe} by choosing test functions tailored to the global bounds.
    
    \textbf{Upper Bound:}
    Let $\beta = \Phi(M)$. We choose the test function $v = [\Phi(u) - \beta]_+$.
    To show that $v$ is admissible (i.e., $v \in L^2(I; H^1_0(\Omega))$), we check the boundary trace. On $\partial \Omega$, we have $u = u_d$. By the definition of $M$, we know $u_d \le M$ almost everywhere. Since $\Phi$ is non-decreasing:
    \begin{equation}
        \Phi(u)|_{\partial \Omega} = \Phi(u_d) \le \Phi(M) = \beta.
    \end{equation}
    Consequently, $\Phi(u) - \beta \le 0$ on the boundary, implying $v = [\Phi(u) - \beta]_+ = 0$ on $\partial \Omega$. Thus, $v$ is a valid test function.
    
    Furthermore, at $t=0$, we have $u_0 \le M$ by definition. This ensures that the initial time term in the energy in equation \eqref{eq:energy} estimate vanishes:
    \begin{equation}
        \int_\Omega \mathcal{E}(u(t)) d\textbf{x} - \int_\Omega \mathcal{E}(u_0) d\textbf{x} = \int_\Omega \mathcal{E}(u(t)) d\textbf{x} \ge 0,
    \end{equation}
    since $\mathcal{E}(u_0) = 0$ for $u_0 \le M$. The rest of the proof (diffusion and gravity terms) proceeds exactly as in Theorem \ref{maxprincipe}, yielding $u \le M$.

    \textbf{Lower Bound:}
    Similarly, let $\alpha = \Phi(m)$. We choose the test function $v = -[\Phi(u) - \alpha]_-$.
    On the boundary, $u = u_d \ge m$, so $\Phi(u) \ge \Phi(m) = \alpha$. Thus, $v$ vanishes on $\partial \Omega$. Using similar energy arguments, we conclude that $u \ge m$.
\end{proof}
\section{Applications}
\subsection{Richards' Equation}

In this section, we show that the existence result established in this paper applies to the Richards equation using the specific soil water retention models considered in \cite{benfanich2025}. For these models, we utilize the transformation defined by:
\begin{equation}
    \mathcal{U}(\theta) = \int_0^\theta (1-s^c)^{-b} \, ds,
\end{equation}
where $c\geq 1$ and $0\leq b <1$. The mapping $\mathcal{U}: [0,1] \to [0, u^*]$ is a bijection, where the constant $u^*$ is given by $u^* = \int_0^1 (1-s^c)^{-b} \, ds \geq 1$. The inverse function, the effective saturation $\theta = \mathcal{U}^{-1}$, lies in $C^1([0, u^*])$ and is strictly increasing. To satisfy the global existence hypotheses, we extend $\theta$ to $\mathbb{R}$ using the variable $\eta$ as follows:
\begin{equation}
    \theta(\eta) = 
    \begin{cases} 
        \eta & \text{if } \eta < 0, \\
        \mathcal{U}^{-1}(\eta) & \text{if } 0 \le \eta \le u^*, \\
        2 - \theta(2u^* - \eta) & \text{if } \eta > u^*.
    \end{cases}
\end{equation}
The extension for $\eta > u^*$ is constructed using the central symmetry of center $(u^*, 1)$ to preserve $C^1$ regularity. The behavior of this extension is illustrated in Figure \ref{fig:extensat}.

\begin{figure}[H]
    \centering
    \includegraphics[width=0.8\linewidth]{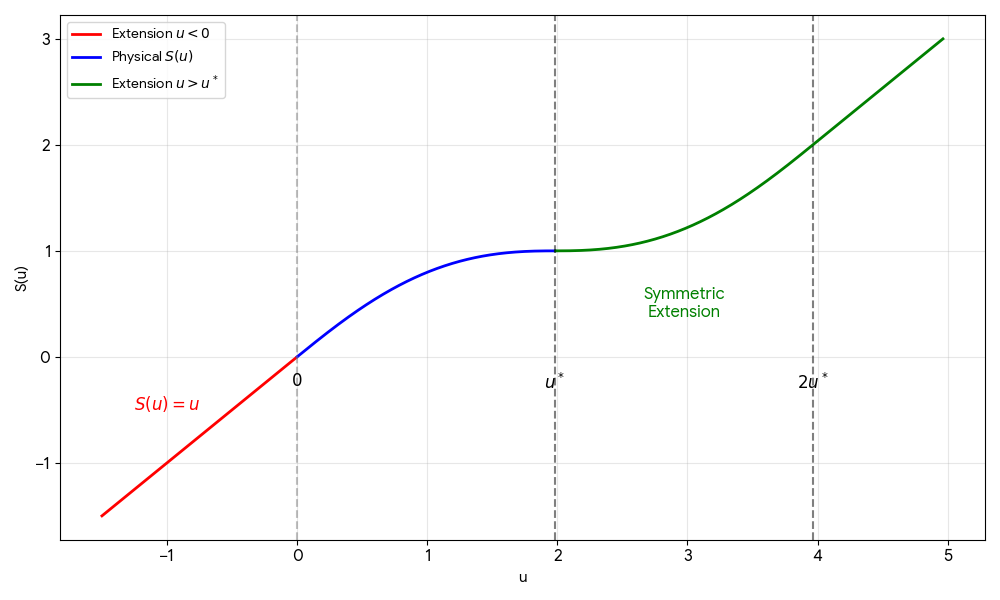}
    \caption{Extension of the saturation function $\theta(\eta)$ for parameters $c=\frac{5}{3}$ and $b = \frac{3}{5}$.}
    \label{fig:extensat}
\end{figure}

For a homogeneous medium, the constant $\phi = \theta_s - \theta_r$. To apply these results to the full physical model over the domain $\eta \in [0, u^*]$, the conductivity is defined as: 
\begin{equation}
    K(\eta) = \frac C \phi K_sK_r(\theta(\eta)) \theta(\eta)^{-a},
\end{equation}
where $K_s$ is the hydraulic conductivity, $C$ is a constant related to the hydraulic properties, $K_r$ is the relative permeability, and $a \ge 1$ is a parameter resulting from the change of variables. Note that while $\theta(\eta)^{-a}$ is singular at $\eta=0$, the product $K_r(\theta) \theta^{-a}$ stays bounded as $\theta \to 0$ under some conditions specified in \cite{benfanich2025}, rendering the singularity removable with $K(0)=0$.

We extend $K$ to $\mathbb{R}$ to verify Hypothesis \ref{H3} as follows:
\begin{enumerate}
    \item For $\eta > u^*$, we extend $K$ constantly by the saturated value:
    \begin{equation*}
        K(\eta) = C^* = \frac{C}{\phi} K_s K_r(\theta(u^*)) \theta(u^*)^{-a}.
    \end{equation*}
    \item For $\eta < 0$, we extend $K$ as an even function to preserve continuity at the origin:
    \begin{equation*}
        K(\eta) = K(-\eta).
    \end{equation*}
\end{enumerate}
Consequently, the global function $K: \mathbb{R} \to [0, K_{max}]$ is continuous and bounded. Figure \ref{fig:extK} illustrates the extended conductivity function $K$. We employ the standard van Genuchten-Mualem model \cite{VanGenuchten1980, mualem1976} for the soil hydraulic properties. The effective saturation $\theta$, the pressure head $\psi$, and relative permeability $K_r$ are related by:
\begin{equation}
    \psi(\theta) = -h_{cap} (\theta^{-\frac1m} -1)^{\frac1n}, \quad K_r(\theta) = {\theta}^{\frac12} \left[ 1 - (1 - \theta^{1/m})^m \right]^2,
\end{equation}
where $h_{cap}, n, m$ are empirical parameters with $m = 1 - 1/n$. 
For the mathematical analysis, these physical variables are transformed into the variable $u$ via the transformation described in \cite{benfanich2025}.

\begin{figure}[H]
    \centering
    \includegraphics[width=0.8\linewidth]{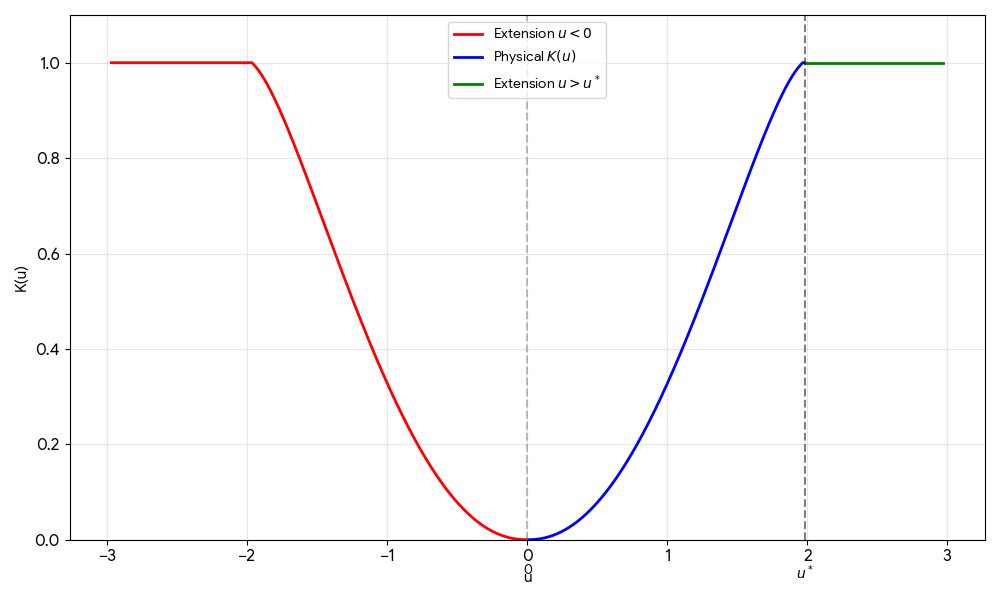}
    \caption{Extension of the diffusivity function $K(\eta)$ with parameters $K_s C = 1$, $m=0.6$, and $a=\frac 53$.}
    \label{fig:extK}
\end{figure}

Finally, we identify the convective term coefficient $\bar K$ and its factorization required by Hypothesis \ref{H3}. We define the auxiliary scaling function $\bar K_1$ as:
\begin{equation}
    \bar K_1(\eta) = \frac{1}{C}\theta(\eta)^a \boldsymbol{e}_z, \quad \text{for } \eta \in [0, u^*].
\end{equation}
We extend $\bar K_1$ to $\mathbb{R}$ similarly to $K$:
\begin{enumerate}
    \item For $\eta > u^*$, we set $\bar K_1(\eta) = \frac{1}{C}\boldsymbol{e}_z$.
    \item For $\eta < 0$, we set $\bar K_1(\eta) = \bar K_1(-\eta)$.
\end{enumerate}
The conductivity $\bar K$ (appearing in the gravity term) is then recovered via the decomposition:
\begin{equation}
    \bar K(\eta) = K(\eta) \bar K_1(\eta).
\end{equation}
This construction ensures that $\bar K$ is bounded and continuous, and that the factorization $\bar K = K \bar K_1$ holds globally. Figure~\ref{fig:extK1} displays the vertical ($z$) component of the vector function $\bar{K}$ derived from the van Genuchten-Mualem model.

\begin{figure}[H]
    \centering
    \includegraphics[width=0.8\linewidth]{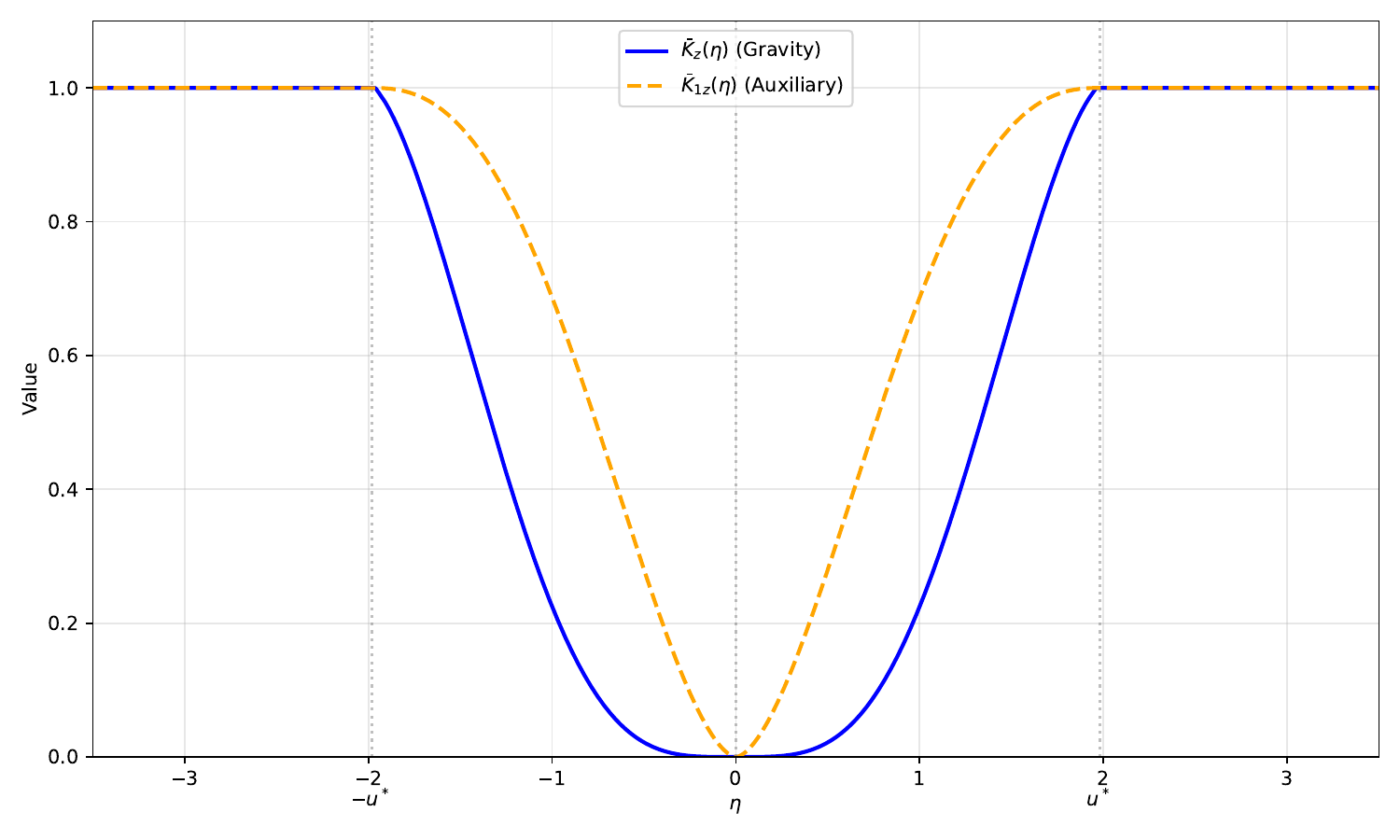}
    \caption{Comparison of the extended functions $\bar K_z$ and $\bar K_{1z}$.}
    \label{fig:extK1}
\end{figure}
We suppose that the source term $\mathcal{S}=0$.

\textbf{Verification of Hypotheses}

We now verify that the extended saturation function $\theta$ and the hydraulic conductivity $K$ constructed above satisfy the structural hypotheses \ref{H1}--\ref{H3} required for the existence theory.

\begin{proposition}[Verification of H1]
    Let $b \in [0, 1)$ and $c \ge 1$. The extended saturation function $\theta: \mathbb{R} \to \mathbb{R}$ satisfies Hypothesis \ref{H1}. Specifically:
    \begin{enumerate}
        \item $\theta(0)=0$ and $\theta$ is strictly increasing.
        \item $\theta \in C^1(\mathbb{R})$ and its derivative is bounded.
        \item The inverse function $\theta^{-1} = \mathcal{U}$ is uniformly Hölder continuous on the physical domain $[0, 1]$.
    \end{enumerate}
\end{proposition}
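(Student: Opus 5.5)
We treat the three pieces of the extension separately, $\eta<0$, $0\le\eta\le u^*$ and $\eta>u^*$, and then check the gluing points $\eta=0$ and $\eta=u^*$.

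\textbf{Monotonicity and $\theta(0)=0$.} On $[0,u^*]$ the function $\mathcal{U}$ is continuous and strictly increasing, since its integrand satisfies $(1-s^c)^{-b}\ge 1>0$. Hence $\theta=\mathcal{U}^{-1}$ is strictly increasing, with $\theta(0)=0$ and $\theta(u^*)=1$. For $\eta<0$ we have $\theta(\eta)=\eta$, which is strictly increasing and tends to $0$ at the junction. For $\eta>u^*$ the formula $\theta(\eta)=2-\theta(2u^*-\eta)$ reflects the graph through the point $(u^*,1)$. The composition of two decreasing maps is increasing, so $\theta$ is strictly increasing there, and it tends to $1$ as $\eta\to u^{*+}$. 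One caveat: $2u^*-\eta$ leaves $[0,u^*]$ once $\eta>2u^*$, and the formula then uses the branch $\theta(\xi)=\xi$ for $\xi<0$. This is consistent and gives $\theta(\eta)=2-2u^*+\eta$ for $\eta>2u^*$. We state this explicitly so that the definition is not circular.

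\textbf{$C^1$ regularity and a bounded derivative.} On $(0,u^*)$ the inverse function rule gives
\[
\theta'(\eta)=\frac{1}{\mathcal{U}'(\theta(\eta))}=(1-\theta(\eta)^c)^{b}\in[0,1].
\]
This expression is continuous on $[0,u^*]$, with $\theta'(0^+)=1$ and $\theta'(u^{*-})=0$ when $b>0$ (the value is $1$ if $b=0$).

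At $\eta=0$ the left derivative is $1$, which matches the right derivative $1$. At $\eta=u^*$, differentiating the reflection gives $\theta'(\eta)=\theta'(2u^*-\eta)$, so the one-sided derivatives agree. Therefore $\theta\in C^1(\mathbb{R})$ and $0\le\theta'\le 1$.

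Two remarks belong here. First, when $b>0$ the derivative vanishes at the isolated point $u^*$; this does not contradict strict monotonicity. Second, Hypothesis \ref{H2} is not part of the claim, although the linear growth on both tails would suggest it.

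\textbf{Hölder continuity of the inverse.} This is the main step. For $0\le\theta_1<\theta_2\le1$ we have
\[
\mathcal{U}(\theta_2)-\mathcal{U}(\theta_1)=\int_{\theta_1}^{\theta_2}(1-s^c)^{-b}\,ds .
\]
Since $c\ge1$ and $s\in[0,1]$, we have $1-s^c\ge 1-s$, so the integrand is at most $(1-s)^{-b}$. Integrating gives
\[
\frac{(1-\theta_1)^{1-b}-(1-\theta_2)^{1-b}}{1-b}\le\frac{(\theta_2-\theta_1)^{1-b}}{1-b},
\]
where the last inequality is the subadditivity of $x\mapsto x^{1-b}$ for $0<1-b\le1$. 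Thus $\mathcal{U}$ is $(1-b)$-Hölder on $[0,1]$ with constant $1/(1-b)$.

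This same bound yields the global condition in Hypothesis \ref{H1}(c) with $\delta=1-b$. On the linear tails the inverse is $1$-Lipschitz, so differences there are controlled by the term $|\theta(\zeta)-\theta(\eta)|$. On the reflected branch the integrand is again $(1-s^c)^{-b}$ after reflection. For pairs of points lying in different pieces, we split the difference at the junction points and add the pieces, using $x^\delta+y^\delta\le 2^{1-\delta}(x+y)^\delta$.

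Finally, membership $\theta^{-1}\in W^{1,1}_{loc}$ follows because $(\theta^{-1})'$ equals $(1-s^c)^{-b}$ on $(0,1)$, equals $1$ on the linear tails, and is reflected in between. Each of these is locally integrable because $b<1$.
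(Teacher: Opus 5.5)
Your proposal is correct and follows essentially the same route as the paper: the inverse-function formula $\theta'=(1-\theta^c)^b$ glued by the reflection identity $\theta'(\eta)=\theta'(2u^*-\eta)$, the bound $(1-s^c)^{-b}\le(1-s)^{-b}$ with subadditivity of $x\mapsto x^{1-b}$ for the Hölder estimate, splitting at the junction points for the global condition in \ref{H1}(c), and local integrability of $(1-s)^{-b}$ for $W^{1,1}_{loc}$. Your deviations are small gains in care rather than a different argument: you prove strict monotonicity piecewise instead of via ``$\theta'\ge 0$ vanishing at a single point'', and you explicitly record the branch $\theta(\eta)=\eta+2-2u^*$ for $\eta>2u^*$ and the case $b=0$.
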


\begin{proof}
    $ $\newline
    \textbf{1. Origin:}
    For $\eta \in [0, u^*]$, $\theta(\eta)$ is the inverse of $\mathcal{U}(\theta) = \int_0^\theta (1-s^c)^{-b} ds$. We have $\mathcal U (0) = 0$, so we can conclude that $\theta(0) =0$.

    \textbf{2. Monotonicity and Regularity ($C^1$ and Bounded Derivative):}
    We compute the derivative $\theta'(\eta)$.
    For $\eta < 0$, $\theta'(\eta) = 1$.
    For $\eta \in [0, u^*]$, using the inverse function theorem:
    \begin{equation}
        \theta'(\eta) = \frac{1}{\mathcal{U}'(\theta)} = (1 - \theta(\eta)^c)^b.
    \end{equation}
    At $\eta=0$ (where $\theta=0$), $\theta'(0^+) = 1$, matching the left derivative.
    At $\eta=u^*$ (where $\theta=1$), $\theta'(u^{*-}) = 0$.
    For $\eta > u^*$, differentiating the symmetry relation yields $\theta'(\eta) = \theta'(2u^* - \eta)$. Thus $\theta'(u^{*+}) = 0$, ensuring $C^1$ continuity.
    Since $\theta(\eta) \in [0, 1]$, we have $0 \le \theta'(\eta) \le 1$, so the derivative is bounded. Moreover, the derivative in all of these cases is non-negative and vanishes at only one point, so we can conclude that the function is strictly increasing.

    \textbf{3. H\"older Continuity of the Inverse:}
    We aim to show that $\mathcal{U}$ is H\"older continuous on the interval $[0, 2]$. Let $\theta_1, \theta_2 \in [0, 2]$. Without loss of generality, assume $\theta_1 < \theta_2$. We distinguish three cases based on the location of the points relative to the symmetry axis $\theta=1$.

    \textit{Case 1: $0 \le \theta_1 < \theta_2 \le 1$.}
    \begin{equation}
        \begin{aligned}
            |\mathcal{U}(\theta_1)-\mathcal{U}(\theta_2)| &= \int_{\theta_1}^{\theta_2} (1-s^c)^{-b}ds \\
            & \leq \int_{\theta_1}^{\theta_2} (1-s)^{-b}ds \quad (\text{since } s^c \le s \text{ for } c \ge 1, s \in [0,1]) \\
            & = \frac{1}{1-b} \left((1-\theta_1)^{1-b} - (1-\theta_2)^{1-b}\right).
        \end{aligned}
    \end{equation}
    Using the inequality $\left||x|^\gamma - |y|^\gamma\right| \le |x-y|^\gamma$ for $0 < \gamma \le 1$ (with $\gamma = 1-b$), we obtain:
    \begin{equation}
        |\mathcal{U}(\theta_1)-\mathcal{U}(\theta_2)| \le \frac{1}{1-b} |(1-\theta_1) - (1-\theta_2)|^{1-b} = \frac{1}{1-b} |\theta_2 - \theta_1|^{1-b}.
    \end{equation}

    \textit{Case 2: $1 \le \theta_1 < \theta_2 \le 2$.}
    By the symmetry of the construction, $\mathcal{U}(\theta)$ on $[1, 2]$ is a reflection of the behavior on $[0, 1]$. Specifically, let $\tilde{\theta}_1 = 2-\theta_2$ and $\tilde{\theta}_2 = 2-\theta_1$. Then $0 \le \tilde{\theta}_1 < \tilde{\theta}_2 \le 1$, and $|\mathcal{U}(\theta_1) - \mathcal{U}(\theta_2)| = |\mathcal{U}(\tilde{\theta}_1) - \mathcal{U}(\tilde{\theta}_2)|$. Applying Case 1:
    \begin{equation}
        |\mathcal{U}(\theta_1)-\mathcal{U}(\theta_2)| \le \frac{1}{1-b} |\tilde{\theta}_2 - \tilde{\theta}_1|^{1-b} = \frac{1}{1-b} |\theta_2 - \theta_1|^{1-b}.
    \end{equation}

    \textit{Case 3: $\theta_1 < 1 < \theta_2$.}
    We use the triangle inequality by inserting the point $1$:
    \begin{equation}
        |\mathcal{U}(\theta_1)-\mathcal{U}(\theta_2)| \le |\mathcal{U}(\theta_1)-\mathcal{U}(1)| + |\mathcal{U}(1)-\mathcal{U}(\theta_2)|.
    \end{equation}
    Applying the results from Case 1 and Case 2 to each term:
    \begin{equation}
        |\mathcal{U}(\theta_1)-\mathcal{U}(\theta_2)| \le \frac{1}{1-b} |1 - \theta_1|^{1-b} + \frac{1}{1-b} |\theta_2 - 1|^{1-b}.
    \end{equation}
    Since $0 < 1-b \le 1$, the function $x \mapsto x^{1-b}$ is subadditive (concave). Therefore, $x^{1-b} + y^{1-b} \le 2^{b} (x+y)^{1-b} \le 2 (x+y)^{1-b}$ for $x,y \ge 0$. Thus:
    \begin{equation}
        |\mathcal{U}(\theta_1)-\mathcal{U}(\theta_2)| \le \frac{2}{1-b} ( (1-\theta_1) + (\theta_2-1) )^{1-b} = \frac{2}{1-b} |\theta_2 - \theta_1|^{1-b}.
    \end{equation}

    Combining all cases, we conclude that $\mathcal{U}$ is uniformly H\"older continuous on $[0, 2]$ with exponent $\delta = 1-b$ and constant $C_H = \frac{2}{1-b}$.
    
    \textbf{Global Verification of \ref{H1}:}
    To verify the global property in Hypothesis \ref{H1}, let $\zeta, \eta \in \mathbb{R}$. Recall that $\mathcal{U} = \theta^{-1}$. We let $\theta_1 = \theta(\zeta)$ and $\theta_2 = \theta(\eta)$. We check the inequality $|\zeta - \eta| = |\mathcal{U}(\theta_1) - \mathcal{U}(\theta_2)|$.
    
    If $\theta_1, \theta_2 \in [0, 2]$, we have shown $|\mathcal{U}(\theta_1) - \mathcal{U}(\theta_2)| \le C_H |\theta_1 - \theta_2|^\delta$.
    If $\theta_1, \theta_2 \notin [0, 2]$, then $\mathcal{U}$ acts as the identity (shifted), which is $1$-Lipschitz: $|\mathcal{U}(\theta_1) - \mathcal{U}(\theta_2)| \le |\theta_1 - \theta_2|$.
    
    For the mixed case, assume $\theta_1 \in [0, 2]$ and $\theta_2 > 2$. Using the triangle inequality through the boundary point $2$:
    \begin{equation}
        \begin{aligned}
             |\mathcal{U}(\theta_1) - \mathcal{U}(\theta_2)| &\le |\mathcal{U}(\theta_1) - \mathcal{U}(2)| + |\mathcal{U}(2) - \mathcal{U}(\theta_2)| \\
             &\le C_H |\theta_1 - 2|^\delta + |2 - \theta_2|.
        \end{aligned}
    \end{equation}
    Since $2$ lies between $\theta_1$ and $\theta_2$, we have $|\theta_1 - 2| \le |\theta_1 - \theta_2|$ and $|2 - \theta_2| \le |\theta_1 - \theta_2|$. Thus:
    \begin{equation}
        |\mathcal{U}(\theta_1) - \mathcal{U}(\theta_2)| \le C_H |\theta_1 - \theta_2|^\delta + |\theta_1 - \theta_2|.
    \end{equation}
    Setting $H_\theta = \max(1, C_H)$, we satisfy the condition for all $\zeta, \eta \in \mathbb{R}$:
    \begin{equation}
        |\zeta - \eta| \le H_\theta \left( |\theta(\zeta) - \theta(\eta)|^\delta + |\theta(\zeta) - \theta(\eta)| \right).
    \end{equation}
    This confirms that the extended function satisfies Hypothesis \ref{H1}.
    
    \textbf{4. Sobolev Regularity of the Inverse:}
    Finally, we show that $\mathcal{U} \in W^{1,1}_{loc}(\mathbb{R})$. Since $\mathcal{U}$ is continuous on $\mathbb{R}$, it suffices to verify that its classical derivative $\mathcal{U}'$ exists almost everywhere and belongs to $L^1_{loc}(\mathbb{R})$.
    
    The derivative is given piecewise by:
    \begin{equation}
        \mathcal{U}'(\theta) = \begin{cases}
            1 & \text{if } \theta < 0, \\
            (1-\theta^c)^{-b} & \text{if } 0 < \theta < 1, \\
            (1-(2-\theta)^c)^{-b} & \text{if } 1 < \theta < 2, \\
            1 & \text{if } \theta > 2.
        \end{cases}
    \end{equation}
    The only potential singularities occur at the points $\theta=1$ (and by symmetry in the extension). We analyze the behavior near $\theta=1^-$. As $\theta \to 1$, we use the Taylor expansion $1-\theta^c \approx c(1-\theta)$. Thus:
    \begin{equation}
        \mathcal{U}'(\theta) \approx \frac{1}{c^b} (1-\theta)^{-b}.
    \end{equation}
    Since the parameter satisfies $b < 1$, the function $(1-\theta)^{-b}$ is integrable near $\theta=1$. Specifically:
    \begin{equation}
        \int_{1-\epsilon}^1 (1-\theta)^{-b} d\theta = \left[ \frac{-(1-\theta)^{1-b}}{1-b} \right]_{1-\epsilon}^1 = \frac{\epsilon^{1-b}}{1-b} < \infty.
    \end{equation}
    Due to the symmetry of the extension, the integral is also finite approaching $\theta=1$ from the right. Outside the interval $[0, 2]$, the derivative is constant ($1$), which is locally integrable.
    
    Therefore, $\mathcal{U}' \in L^1_{loc}(\mathbb{R})$. Thus we conclude that $\mathcal{U} \in W^{1,1}_{loc}(\mathbb{R})$.
\end{proof}

\begin{proposition}[Verification of \ref{H2}]
    The extended saturation function satisfies the growth condition $\theta(\eta)\eta \ge \alpha \eta^2$ for some $\alpha > 0$.
\end{proposition}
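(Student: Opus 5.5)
The plan is to reduce the growth condition to a lower bound on the ratio $\theta(\eta)/\eta$ for $\eta\neq 0$ (the case $\eta=0$ is trivial since $\theta(0)=0$). Because $\theta$ is strictly increasing with $\theta(0)=0$, $\theta(\eta)$ has the sign of $\eta$, so $\theta(\eta)\eta=|\theta(\eta)||\eta|$. It therefore suffices to find $\alpha>0$ with $\theta(\eta)/\eta\ge\alpha$ for all $\eta\neq0$. I will treat the four branches of the piecewise definition of the extension separately and take $\alpha$ to be the minimum of the resulting constants. I expect $\alpha=\frac{1}{2u^*}$ to work.

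\emph{Negative branch and physical branch.} For $\eta<0$ we have $\theta(\eta)=\eta$, so the ratio equals $1$. For $\eta\in(0,u^*]$ I will use concavity. By the computation in the verification of \ref{H1}, $\theta'(\eta)=(1-\theta(\eta)^c)^b$. Since $\theta$ is increasing and $s\mapsto(1-s^c)^b$ is non-increasing on $[0,1]$, $\theta'$ is non-increasing on $[0,u^*]$, so $\theta$ is concave there. A concave function vanishing at $0$ has $\eta\mapsto\theta(\eta)/\eta$ non-increasing, hence
\[
\frac{\theta(\eta)}{\eta}\ge\frac{\theta(u^*)}{u^*}=\frac{1}{u^*}.
\]

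\emph{Reflected branch and tail.} For $\eta\in[u^*,2u^*]$, the symmetry construction gives $\theta(\eta)=2-\theta(2u^*-\eta)$ with $2u^*-\eta\in[0,u^*]$, so $\theta(\eta)\ge 1$. Since also $\eta\le 2u^*$, the ratio is at least $\frac{1}{2u^*}$.

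For $\eta>2u^*$ we have $2u^*-\eta<0$, so $\theta(2u^*-\eta)=2u^*-\eta$ and $\theta(\eta)=\eta+2-2u^*$. Then
\[
\frac{\theta(\eta)}{\eta}=1-\frac{2u^*-2}{\eta}.
\]
Because $u^*\ge1$, this is non-decreasing in $\eta$, so it is bounded below by its value at $\eta=2u^*$, namely $\frac{1}{u^*}$.

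The only delicate point is the concavity step on $[0,u^*]$, which relies on the monotonicity of $\theta'$ and hence on $b\ge0$ and $c\ge1$. The tail case also needs care with the sign of $2-2u^*$, which is where $u^*\ge1$ enters. Collecting all branches gives $\theta(\eta)\eta\ge\alpha\eta^2$ with $\alpha=\frac{1}{2u^*}>0$.
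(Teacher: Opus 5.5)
Your proof is correct and follows essentially the paper's route: concavity of $\theta$ on $[0,u^*]$ bounds the secant slope, $\theta\ge 1$ on $[u^*,2u^*]$, and you arrive at the same constant $\alpha=\frac{1}{2u^*}$ (the paper only packages these two branches into one concave auxiliary function $g$ on $[0,2u^*]$). Your treatment of the tail is more careful than the paper's: the paper asserts $\theta(\eta)=\eta$ for $\eta>2u^*$, whereas the correct formula is $\theta(\eta)=\eta+2-2u^*$, and the bound there needs $u^*\ge 1$ exactly as you use it.
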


\begin{proof}
    We observe that $\theta(\eta)$ is concave on $[0, u^*]$. On the interval $[u^*, 2u^*]$, we have $\theta(\eta) \ge 1$. We define the auxiliary function $g$:
    \begin{equation}
    g(\eta)=\begin{cases}
            \theta(\eta) & 0 \le \eta \le u^*, \\
            1 & u^* \le \eta \le 2u^*.
        \end{cases}
    \end{equation}
    Since $g$ is concave and $g(0)=0$ with $g(\eta)>0$, the secant slope $g(\eta)/\eta$ is bounded from below by the slope at the endpoint $2u^*$:
    \begin{equation}
        g(\eta) \ge \frac{g(2u^*)}{2u^*} \eta = \frac{1}{2u^*} \eta =: \alpha \eta.
    \end{equation}
    Since $\theta(\eta) \ge g(\eta)$, we have $\theta(\eta)\eta \ge \alpha \eta^2$.
    For $\eta \notin [0, 2u^*]$, the function behaves linearly ($\theta(\eta) = \eta$), which satisfies the inequality with $\alpha  = \frac{1}{2u^*}\leq 1$. Thus, the condition holds globally.
\end{proof}

Consequently, Theorem \ref{theorem1} guarantees the existence of a weak solution to the Richards equation. Furthermore, applying the Maximum Principle theorem \ref{maxprincipe} ensures that if the initial data satisfies $0 \leq u_0 \leq u^*$ almost everywhere, then the solution remains within the bounds $0 \leq u \leq u^*$ almost everywhere. Equivalently, the effective saturation satisfies the physical constraints $0 \leq \theta(u) \leq 1$ almost everywhere. This demonstrates that the solution remains strictly within the physical domain and is therefore independent of the specific extensions constructed for the unphysical regimes ($u < 0$ and $u > u^*$).
\section{Convergence of the \texorpdfstring{$L$}{L}-scheme Linearization}
\begin{theorem}[Convergence of the L-scheme]
Let $u_n$ be the solution to the semi-discrete problem \eqref{semidesceq}. Let $\{u_n^i\}_{i\geq 1}$ be the sequence generated by the linear iteration scheme:
\begin{equation}
    L(u_n^{i}- u_n^{i-1},v) + (\theta(u_n^{i-1}),v)+\tau (u_n^i,v)_\omega =\langle f,v \rangle, \quad \forall v \in V,
    \label{Lscheme_thm}
\end{equation}
given an initial guess $u_n^0 \in L^2(\Omega)$.
Assume that the stabilization parameter satisfies $L > L_\theta/2$, where $L_\theta$ is the Lipschitz constant of $\theta$. Then, as $i \to \infty$,
\begin{enumerate}
    \item The sequence converges linearly: $u_n^i \to u_n$ in $L^2(\Omega)$.
    \item The gradients converge: $\sqrt{K(u_{n-1})} \nabla u_n^i \to \sqrt{K(u_{n-1})} \nabla u_n$ in $L^2(\Omega)$.
\end{enumerate}
\end{theorem}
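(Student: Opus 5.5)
The plan is the standard error-equation argument for the $L$-scheme. The fixed-point map is non-strict where $\theta'$ vanishes, so the rate is the step I expect to be hard and may need more than the hypotheses obviously give. Set $e^i := u_n^i - u_n$ and $\delta\theta^{i} := \theta(u_n^{i}) - \theta(u_n)$. Subtracting \eqref{semidesceq} from \eqref{Lscheme_thm} gives, for all $v \in V$,
\begin{equation*}
L(e^i - e^{i-1}, v) + (\delta\theta^{i-1}, v) + \tau (e^i, v)_\omega = 0 .
\end{equation*}
We work with $u_n^0\in V$; for $u_n^0\in L^2(\Omega)$ only, the first iterate $u_n^1$ already lies in $V$ by Lax--Milgram, as in Proposition \ref{prop:existofreg}, so we may restart from $i=1$.

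\textbf{Step 1: energy inequality.} Test with $v = e^i$ and use the algebraic identity $(x-y)x = \tfrac12(x^2-y^2+(x-y)^2)$ on the first term. Split $(\delta\theta^{i-1}, e^i) = (\delta\theta^{i-1}, e^{i-1}) + (\delta\theta^{i-1}, e^i - e^{i-1})$. Since $\theta$ is non-decreasing and $L_\theta$-Lipschitz (\ref{H1}), we have pointwise $(\theta(a)-\theta(b))(a-b) \ge L_\theta^{-1}(\theta(a)-\theta(b))^2$. Young's inequality gives $|(\delta\theta^{i-1}, e^i - e^{i-1})| \le \tfrac{1}{2L}\|\delta\theta^{i-1}\|^2 + \tfrac L2\|e^i-e^{i-1}\|^2$. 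The $\|e^i - e^{i-1}\|^2$ terms cancel, and we obtain
\begin{equation*}
\frac L2\|e^i\|^2 + \tau|e^i|_V^2 + \Big(\frac1{L_\theta} - \frac1{2L}\Big)\|\delta\theta^{i-1}\|^2 \le \frac L2 \|e^{i-1}\|^2 .
\end{equation*}
The hypothesis $L > L_\theta/2$ is exactly what makes the coefficient $\gamma := \frac1{L_\theta} - \frac1{2L}$ positive.

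\textbf{Step 2: convergence.} The inequality shows that $\|e^i\|$ is non-increasing. Summing over $i$ yields
\begin{equation*}
\sum_i \Big(\tau |e^i|_V^2 + \gamma \|\delta\theta^{i-1}\|^2\Big) \le \frac L2 \|e^0\|^2 ,
\end{equation*}
so $\theta(u_n^i) \to \theta(u_n)$ in $L^2(\Omega)$ and $|e^i|_V \to 0$. The latter is statement 2, since $|e^i|_V = \|\sqrt{K(u_{n-1})}\nabla(u_n^i - u_n)\|$. To pass from $\theta(u_n^i)\to\theta(u_n)$ to $u_n^i \to u_n$ in $L^2$, use the global continuity condition \ref{H1}(c): $|e^i| \le H_\theta(|\delta\theta^i| + |\delta\theta^i|^\delta)$. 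Hölder's inequality on the bounded domain then gives $\|e^i\| \le H_\theta(\|\delta\theta^i\| + |\Omega|^{(1-\delta)/2}\|\delta\theta^i\|^\delta)\to 0$.

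\textbf{Step 3: linear rate (main obstacle).} A genuine contraction factor needs a lower bound of $\|\delta\theta^{i-1}\|^2 + \tfrac{2\tau}{L}|e^i|_V^2$ by a fixed multiple of $\|e^{i-1}\|^2$. This is not automatic, because both $\theta'$ and the weight $\omega$ may vanish. My first attempt is to use the identity $\tfrac{L}{2}\|e^{i-1}\|^2 - \tfrac L2\|e^i\|^2 \ge \gamma\|\delta\theta^{i-1}\|^2$ together with the growth condition \ref{H2} and the monotone decay of $\|e^i\|$, aiming to show that the energy drop is a fixed fraction $\kappa\|e^{i-1}\|^2$. If this fails in full generality, the fallback is to prove linear convergence of the quantity that does contract, namely $\|\delta\theta^i\|$ combined with the weighted energy $\|e^i\|_L^2 = \|e^i\|^2 + \tfrac{2\tau}{L}|e^i|_V^2$, in the spirit of the norm $\|\cdot\|_L$ of Proposition \ref{prop:existofreg}. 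The $L^2$ convergence of $u_n^i$ itself then follows from it through \ref{H1}(c) as in Step 2; that route gives convergence of $u_n^i$ but only a Hölder-reduced rate, with exponent tied to $\delta$, not a linear one.
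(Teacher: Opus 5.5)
Your Steps 1 and 2 follow the paper's argument exactly. You use the same error equation and the same splitting $(\delta\theta^{i-1},e^i)=(\delta\theta^{i-1},e^{i-1})+(\delta\theta^{i-1},e^i-e^{i-1})$. You use the bound $(\theta(a)-\theta(b))(a-b)\ge L_\theta^{-1}|\theta(a)-\theta(b)|^2$, Young's inequality with cancellation of $\|e^i-e^{i-1}\|^2$, summation in $i$, and then \ref{H1}(c) with H\"older. These steps correctly give $u_n^i\to u_n$ in $L^2(\Omega)$ and claim 2.

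What you do not prove is the word ``linearly'' in claim 1, and neither branch of your Step 3 can supply it. The first attempt fails because \ref{H2} only compares $\theta(\zeta)\zeta$ with $\zeta^2$. That controls growth relative to the origin, not the difference quotients $(\theta(a)-\theta(b))/(a-b)$; in the Richards application with $b>0$, \ref{H2} holds while $\theta'(u^*)=0$. The fallback rests on the claim that $\|\delta\theta^i\|$ together with $\|e^i\|_L$ ``does contract''. This is unjustified: the energy inequality only gives
\[
\|e^i\|^2+\tfrac{2\tau}{L}|e^i|_V^2\le\|e^{i-1}\|^2-\tfrac{2\gamma}{L}\|\delta\theta^{i-1}\|^2 ,
\]
which is a non-strict contraction. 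A fixed factor would again require bounding $\|e^{i-1}\|^2$ by a multiple of $\|\delta\theta^{i-1}\|^2+|e^{i-1}|_V^2$. That needs strong monotonicity of $\theta$ or a weighted Poincar\'e inequality for $\omega$, and neither is assumed.

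The paper closes Step 3 by inverting \ref{H1}(c). Since $\|\delta\theta^{i-1}\|\le L_\theta\|e^{i-1}\|\le L_\theta\|e^0\|$, one gets $\|\delta\theta^{i-1}\|^2\ge c\,\|e^{i-1}\|^{2/\delta}$ with $c>0$ independent of $i$, hence
\[
\|e^i\|^2\le\bigl(1-c'\|e^{i-1}\|^{2(1-\delta)/\delta}\bigr)\|e^{i-1}\|^2 .
\]
This is the concrete form of your ``H\"older-reduced rate'', and you should write it down. From it, however, the paper only concludes $\limsup_i\|e^i\|/\|e^{i-1}\|\le 1$ and still calls this linear. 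It is linear only when $\delta=1$, where the factor is the constant $1-\gamma/(2LH_\theta^2)$. For $\delta<1$, set $a_i=\|e^i\|^2$ and $p=(1-\delta)/\delta$. Bernoulli's inequality turns the recursion into $a_i^{-p}\ge a_{i-1}^{-p}+pc'$, i.e. only the algebraic rate $\|e^i\|^2=O(i^{-\delta/(1-\delta)})$.

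This is not an artifact of the estimate. Where the weighted term gives no $L^2$ control (formally $\omega\equiv 0$), the iteration is pointwise: $e^i=e^{i-1}-(\theta(u_n^{i-1})-\theta(u_n))/L$. If $\theta'(u_n)=0$ there, then $|\theta(u_n^{i-1})-\theta(u_n)|=o(|e^{i-1}|)$, so $e^i/e^{i-1}\to 1$.

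So your diagnosis is correct, and the missing step is missing from the paper as well. A genuine linear rate needs an extra hypothesis. One option is $\delta=1$. Another is $\omega\ge\omega_{\min}>0$: then Poincar\'e turns $\tau|e^i|_V^2$ into at least $\tau\omega_{\min}^2C_P^{-2}\|e^i\|^2$, giving the factor $L/(L+2\tau\omega_{\min}^2C_P^{-2})$.
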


\begin{proof}
    Let $e^i = u_n - u_n^i$ denote the error at iteration $i$. Subtracting the linearized equation \eqref{Lscheme_thm} from the exact semi-discrete equation \eqref{semidesceq}, we obtain the error equation:
    \begin{equation} 
        L(e^i -e^{i-1},v) + (\theta(u_n) - \theta(u^{i-1}_n),v) + \tau(e^i,v)_\omega = 0.
    \end{equation}
    We test this equation with $v=e^i$. Applying the algebraic identity $$(x-y)x = \frac{1}{2}(x^2 -y^2 + (x-y)^2)$$ to the first term, we find:
    \begin{equation}
        \begin{aligned}
            \frac{L}{2}(\|e^i\|^2 - \|e^{i-1}\|^2 + \| e^i -e^{i-1}\|^2) &+ (\theta(u_n) -\theta(u_n^{i-1}), e^{i-1}) + \tau|e^i|_V^2 \\
            &= -(\theta(u_n) -\theta(u_n^{i-1}), e^i - e^{i-1}).
        \end{aligned}
    \end{equation}
    Using the monotonicity and Lipschitz continuity of $\theta$ (denoted by $L_\theta$), combined with Cauchy's and Young's inequalities, we estimate the right-hand side:
    \begin{equation}
        \begin{aligned}
            \frac{L}{2}(\|e^i\|^2 - \|e^{i-1}\|^2 &+ \| e^i -e^{i-1}\|^2) + \frac{1}{L_\theta}\|\theta(u_n) -\theta(u_n^{i-1})\|^2 + \tau |e^i|_V^2 \\
            &\leq \frac{1}{2L}\|\theta(u_n) -\theta(u_n^{i-1})\|^2  + \frac{L}{2}\|e^i - e^{i-1}\|^2.
        \end{aligned}
    \end{equation}
    Simplifying the terms yields the fundamental error inequality:
    \begin{equation}
        \frac{L}{2}(\|e^i\|^2 - \|e^{i-1}\|^2 ) +\left(\frac{1}{L_\theta}-\frac{1}{2L}\right)\|\theta(u_n) -\theta(u_n^{i-1})\|^2+ \tau |e^i|_V^2 \leq 0.
        \label{Erroranalysis_proof}
    \end{equation}
    Summing this inequality for $1 \leq i \leq k$ and assuming $L > L_\theta/2$, we obtain the bound:
    \begin{equation}
        \frac{L}{2}\|e^k\|^2  +\left(\frac{1}{L_\theta}-\frac{1}{2L}\right)\sum_{i=1}^k\|\theta(u_n) -\theta(u_n^{i-1})\|^2+ \tau \sum_{i=1}^k|e^i|_V^2 \leq \frac{L}{2}\|e^0\|^2.
    \end{equation}
    Since the right-hand side is finite, the series on the left-hand side converge. This implies the convergence of the physical variables:
    \begin{equation}
        \begin{aligned}
        \theta(u_n^i) &\to \theta(u_n) \\
         \sqrt{K(u_{n-1})} \nabla u_n^i &\to \sqrt{K(u_{n-1})} \nabla u_n
    \end{aligned} \quad \text{as } i \to \infty, \text{ in } L^2(\Omega).
    \end{equation}
    
    To prove the strong convergence of $u_n^i$, we utilize the properties of the inverse function $\theta^{-1}$. 
    By Hypothesis \ref{H1}, we have:
    \begin{equation}
        \int_{\Omega} |u_n - u^i_n|^2 d\textbf{x} \leq 2H_\theta^2 \qty(\int_\Omega |\theta(u_n) -\theta(u_n^i)|^{2} d\textbf{x}+\int_\Omega |\theta(u_n) -\theta(u_n^i)|^{2\delta} d\textbf{x}).
    \end{equation}
    Applying H\"older's inequality with exponents $p=1/\delta$ and $q = 1/(1-\delta)$ yields:
    \begin{equation}
        \int_{\Omega} |u_n - u^i_n|^2 d\textbf{x} \leq  2H_\theta^2 \qty(\|\theta(u_n) -\theta(u_n^i)\|^{2}+|\Omega|^{1-\delta} \|\theta(u_n) -\theta(u_n^i)\|^{2\delta}).
    \end{equation}
    thus we have 
    \begin{equation}
        \|u_n-u_n^i\| \leq H_\theta \sqrt2\sqrt{ |\Omega|^{1-\delta} + \|\theta(u_n) -\theta(u_n^i)\|^{2(1-\delta)}} \|\theta(u_n) -\theta(u_n^i)\|^\delta
    \end{equation}
    Consequently, $u_n^i \to u_n$ strongly in $L^2(\Omega)$ as $i \to \infty$.
    
    Finally, we estimate the rate of convergence. From \eqref{Erroranalysis_proof} and \ref{H1}, for $i \geq i_0$:
    \begin{equation}
        \frac{L}{2}\|e^i\|^2 + \frac{1}{2^{\frac1\delta}H^{\frac 2\delta}_\theta \qty(|\Omega|^{1-\delta}+ \|\theta(u_n) -\theta(u_n^{i-1})\|^{2(1-\delta)})^{\frac 1\delta}}\left(\frac{1}{L_\theta}-\frac{1}{2L} \right)\|e^{i-1}\|^{\frac{2}{\delta}} \leq \frac{L}{2}\|e^{i-1}\|^2.
    \end{equation}
    Rearranging terms provides the recurrence relation:
    \begin{equation}
        \|e^i\|^2  \leq \left(1-\frac{2}{L2^{\frac1\delta}H^{\frac 2\delta}_\theta \qty(|\Omega|^{1-\delta}+ \|\theta(u_n) -\theta(u_n^{i-1})\|^{2(1-\delta)})^{\frac 1\delta}}\left(\frac{1}{L_\theta}-\frac{1}{2L} \right)\|e^{i-1}\|^{\frac{2(1-\delta)}{\delta}}\right)\|e^{i-1}\|^2.
    \end{equation}
    Thus, as $i \to \infty$, we have
    \begin{equation}
        \limsup_i\frac{\|e^i\|}{\|e^{i-1}\|} \leq 1,
    \end{equation}
    then, the convergence is at least linear, but with a ratio that gets close to $1$, which makes convergence linear but slow.
\end{proof}
\section{Convergence of the semi-implicit method}
\subsection{Convergence of the Regularized Discrete Solution}
In this subsection, we establish that the solution of the regularized semi-discrete problem converges to the solution of the degenerate semi-discrete problem as the regularization parameter $\epsilon \to 0$.

Let $u^\tau$ be the solution of the semi-implicit discretized equation \eqref{semiimplicitmeth}, defined piecewise as $u^{\tau}(t) = u_n$ for $t \in (t_{n-1}, t_n]$. Similarly, let $u^{\tau,\epsilon}$ be the solution to the regularized problem \eqref{regdiffconv_prop}.

\begin{proposition}
    Let $N \in \mathbb{N}$ and $\tau = T/N$. Let $\{u_n\}_{n=1}^N \subset L^2(\Omega)$, such that for every $n=1,\cdots, N$, we have $u_n \in V_n$ be the solution sequence of the degenerate scheme:
    \begin{equation}
        (\theta(u_n)- \theta(u_{n-1}),v) + \tau (K(u_{n-1})\nabla u_n,\nabla v) + \tau(\bar K(u_{n-1}), \nabla v) = \tau (\mathcal S(u_{n-1}),v),\; \forall v \in V_n,
        \label{semiimplicitmeth}
    \end{equation}
    where $V_n$ is the weighted Sobolev space with weight $\omega_n = \sqrt{K(u_{n-1})}$.
    
    Let $\epsilon > 0$ and $\{u_n^\epsilon\}_{n=1}^N \subset \mathcal{H}$ be the solution sequence of the regularized scheme:
    \begin{equation}
        \begin{aligned}
             (\theta(u^\epsilon_n)- \theta(u^\epsilon_{n-1}),v) &+ \tau ((K(u^\epsilon_{n-1})+\epsilon)\nabla u_n^\epsilon,\nabla v) \\
             &+ \tau(\bar K(u^\epsilon_{n-1}), \nabla v) = \tau (\mathcal S(u^\epsilon_{n-1}),v),\; \forall v \in \mathcal{H}.
        \end{aligned}
        \label{regdiffconv_prop}
    \end{equation}
    Then, for every $n = 1, \dots, N$, let $u^{\tau} = u_n$ for $t_{n-1}< t \leq t_{n}$ then:
    \begin{equation}
        \|u^{\tau,\epsilon} - u^\tau\|_{L^2(\Omega\times I)} \to 0 \quad \text{as } \epsilon \to 0,
    \end{equation}
    uniformly in terms of $0<\tau \leq T$
    \label{convoftheregsoltodegsol_disc}
\end{proposition}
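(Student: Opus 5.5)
We argue by induction on the time index $n$, for fixed $N$, and show that $u_n^\epsilon \to u_n$ in $L^2(\Omega)$ as $\epsilon \to 0$ for every $n=1,\dots,N$. Since $u_0^\epsilon = u_0$, the base case is trivial. Since $u^{\tau,\epsilon}-u^\tau$ is piecewise constant in time,
\begin{equation*}
\|u^{\tau,\epsilon}-u^\tau\|_{L^2(\Omega\times I)}^2=\tau\sum_{n=1}^N\|u_n^\epsilon-u_n\|^2 .
\end{equation*}
The stepwise convergence therefore gives the claim for each fixed $\tau$.

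\textbf{Inductive step.} Assume $u_{n-1}^\epsilon\to u_{n-1}$ in $L^2(\Omega)$, hence a.e. along a subsequence.

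\emph{Uniform bounds.} Test \eqref{regdiffconv_prop} with $v=u_n^\epsilon$ and use \ref{H2} together with the bounds in \ref{H3}, \ref{H4}. This gives $\|u_n^\epsilon\|$ and $\|\sqrt{K(u_{n-1}^\epsilon)+\epsilon}\,\nabla u_n^\epsilon\|$ bounded independently of $\epsilon$, exactly as in the proof of Proposition \ref{prop:existofreg}; the constant involves $\tau^{-1}$, but $\tau$ is fixed here.

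\emph{Weak limits.} Extract subsequences with $u_n^\epsilon\rightharpoonup \tilde u$ and $\theta(u_n^\epsilon)\rightharpoonup w$ in $L^2(\Omega)$, and $\sqrt{K(u_{n-1}^\epsilon)+\epsilon}\,\nabla u_n^\epsilon \rightharpoonup \boldsymbol{\sigma}$ in $L^2(\Omega)^d$. Since $K$ is continuous and bounded, $\sqrt{K(u^\epsilon_{n-1})+\epsilon}\to\omega_n$ a.e. and boundedly. Similarly, $\bar K(u^\epsilon_{n-1})\to\bar K(u_{n-1})$ and $\mathcal S(u^\epsilon_{n-1})\to \mathcal S(u_{n-1})$ strongly in $L^2$ by dominated convergence.

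\emph{Identifying the flux.} For $v\in C_c^\infty(\Omega)$, dominated convergence lets us pass to the limit in
\begin{equation*}
\bigl(\sqrt{K(u^\epsilon_{n-1})+\epsilon}\,\nabla u^\epsilon_n,\ \sqrt{K(u^\epsilon_{n-1})+\epsilon}\,\nabla v\bigr).
\end{equation*}
We must show the limit equals $(\omega_n\nabla\tilde u,\omega_n\nabla v)$, i.e.\ that $\boldsymbol\sigma=\omega_n\nabla\tilde u$ and $\tilde u\in V_n$. I would get this by a distributional argument: pair with $\omega_n\boldsymbol\phi$ for smooth $\boldsymbol\phi$, integrate by parts on $u_n^\epsilon$, and use the approximation of the weights. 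This is the delicate point, because weighted spaces with varying degenerate weights need not be closed under such limits. If necessary I would use \ref{H3}(b) (the set $\{K<\delta\}$ is small) to localize to $\{K(u_{n-1})\ge\delta\}$, where the weights are uniformly positive.

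\emph{Identifying $w$ and concluding.} Passing to the limit gives
\begin{equation*}
(w-\theta(u_{n-1}),v)+\tau(\tilde u,v)_{\omega_n}+\tau(\bar K(u_{n-1}),\nabla v)=\tau(\mathcal S(u_{n-1}),v).
\end{equation*}
Next, test the $\epsilon$-equation with $u_n^\epsilon$ and use weak lower semicontinuity of the weighted seminorm. This gives $\limsup(\theta(u_n^\epsilon),u_n^\epsilon)\le (w,\tilde u)$, as in the proof of Theorem \ref{eistenceforsemidis}. Lemma \ref{Lemmabres} together with Lemma \ref{maximalmonotone} then yields $w=\theta(\tilde u)$. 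Uniqueness in Theorem \ref{eistenceforsemidis} gives $\tilde u=u_n$, so the whole family converges. For strong convergence, use monotonicity:
\begin{equation*}
(\theta(u_n^\epsilon)-\theta(u_n),u_n^\epsilon-u_n)\to0 .
\end{equation*}
Since $\theta$ is Lipschitz, $|\theta(a)-\theta(b)|^2\le L_\theta(\theta(a)-\theta(b))(a-b)$, so $\theta(u_n^\epsilon)\to\theta(u_n)$ in $L^2$. Hypothesis \ref{H1}(c) with H\"older's inequality, as in the L-scheme theorem, then upgrades this to $u_n^\epsilon\to u_n$ in $L^2$.

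\textbf{Uniformity in $\tau$ (main obstacle).} The induction gives convergence for each fixed $\tau$, but the step bounds above blow up as $\tau\to0$. For uniformity I would instead use the $\tau$-independent estimates of Proposition \ref{proptimedis} and the time-continuous estimates of Proposition \ref{propositionuniformbounds}. Specifically, I would derive an $\epsilon$-independent $L^1$-contraction type estimate: test the difference of the two schemes with a regularized sign of $\theta(u_n^\epsilon)-\theta(u_n)$, and control the explicit lagged coefficient differences by \ref{H3}(b). The resulting bound should have the form
\begin{equation*}
\sup_n\|\theta(u^\epsilon_n)-\theta(u_n)\|_{L^1}\le C\,\rho(\epsilon),\qquad \rho(\epsilon)\to0,
\end{equation*}
with $C$ independent of $\tau$. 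Converting to $L^2$ would then use the $L^\infty(L^2)$ bounds and \ref{H1}(c). I expect this step, and the flux identification above, to be the real difficulty.
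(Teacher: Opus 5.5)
\textbf{The gap: uniformity in $\tau$ is not proved.} The statement claims convergence uniformly in $0<\tau\le T$. Your proposal proves only fixed-$\tau$ convergence and replaces the rest by an $L^1$ estimate that ``should'' hold. As described, that estimate runs into three obstructions.
\begin{enumerate}
\item \emph{Admissibility.} $\operatorname{sign}_\eta(u^\epsilon_n-u_n)$ is not an admissible test function in \eqref{regdiffconv_prop}, since $u_n\in V_n$ need not lie in $\mathcal H$.
\item \emph{Lagged coefficients.} The difference of the diffusion fluxes is $K(u_{n-1})\nabla(u^\epsilon_n-u_n)+\bigl(K(u^\epsilon_{n-1})+\epsilon-K(u_{n-1})\bigr)\nabla u^\epsilon_n$. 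The second part contributes
\[
\frac{\tau}{\eta}\int_{\{|u^\epsilon_n-u_n|<\eta\}}\bigl(K(u^\epsilon_{n-1})+\epsilon-K(u_{n-1})\bigr)\nabla u^\epsilon_n\cdot\nabla(u^\epsilon_n-u_n)\,d\mathbf{x},
\]
which has no sign and no bound as $\eta\to0$; the same holds for $\bar K(u^\epsilon_{n-1})-\bar K(u_{n-1})$. Otto/Carrillo-type $L^1$-contraction needs the fluxes to be $\nabla\Phi(u)$ and $\bar K(u)$ of the very unknowns being compared, and the semi-implicit lag destroys this. \ref{H3}(b) bounds the measure of the near-degenerate set, not $K(u^\epsilon_{n-1})-K(u_{n-1})$.
\item \emph{Norm conversion.} An $L^1$ rate plus $L^\infty(I;L^2(\Omega))$ bounds interpolates only to $L^p$ with $p<2$, not to $L^2(\Omega\times I)$.
\end{enumerate}
Also, your diagnosis that the step bounds blow up is off. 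Testing first with $\theta(u^\epsilon_n)$ and using $\bar K=K\bar K_1$ gives $\|u^\epsilon_n\|+\sqrt{\tau}\,\|\sqrt{K(u^\epsilon_{n-1})}\nabla u^\epsilon_n\|\le C$, with $C$ independent of $\tau$, $n$ and $\epsilon$. Only the unscaled gradient bound degenerates.

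The paper's uniformity argument rests on exactly these bounds. It notes that the only inequality Lemma \ref{Lemmabres} needs, $\limsup_{\epsilon\to0}(\theta(u^\epsilon_n),u^\epsilon_n)\le(\theta(u_n),u_n)$, is controlled by them once the nonnegative term $\tau\epsilon\|\nabla u^\epsilon_n\|^2$ is dropped. Your suspicion that this is delicate is justified: $\tau$-uniform bounds inside a compactness argument do not by themselves give a $\tau$-uniform modulus of convergence over $N=T/\tau$ steps. But your proposal does not supply the missing quantitative estimate either, so the uniform claim remains unproved in it.

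\textbf{The fixed-$\tau$ part} coincides with the paper's proof: induction from $u_0^\epsilon=u_0$, $\epsilon$-independent bounds from testing with $u^\epsilon_n$, weak limits, Lemma \ref{Lemmabres} for $w=\theta(\tilde u)$, uniqueness from Theorem \ref{eistenceforsemidis}, and strong convergence via $|\theta(a)-\theta(b)|^2\le L_\theta(\theta(a)-\theta(b))(a-b)$ and \ref{H1}(c).

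You leave the flux identification open, and your first idea does not work as stated. Integrating by parts against $\omega_n\boldsymbol\phi$ needs derivatives of $\sqrt{K(u^\epsilon_{n-1})+\epsilon}\,\omega_n$, but $K$ is only continuous and $u_{n-1}$ only lies in $V_{n-1}$. Pairing instead with $\boldsymbol\psi/\sqrt{K(u^\epsilon_{n-1})+\epsilon}$ would need local integrability of $K(u_{n-1})^{-1}$, which \ref{H3} does not give.

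Your fallback is the paper's route, and it does not use \ref{H3}(b):
\begin{itemize}
\item On $\Omega_0=\{K(u_{n-1})=0\}$ the diffusion term vanishes in the limit, because $\|\sqrt{K(u^\epsilon_{n-1})}\|_{L^2(\Omega_0)}\to0$ while $\sqrt{\tau}\|\sqrt{K(u^\epsilon_{n-1})}\nabla u^\epsilon_n\|$ stays bounded.
\item On $\Omega^m=\{K(u_{n-1})\ge 1/m\}$, Egoroff's theorem gives $A_\delta$ with $|\Omega^m\setminus A_\delta|<\delta$ and $K(u^\epsilon_{n-1})\ge 1/(2m)$ on $A_\delta$ for small $\epsilon$. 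Hence $\nabla u^\epsilon_n$ is bounded in $L^2(A_\delta)$.
\item Letting $\delta\to0$ and then $m\to\infty$ exhausts $\Omega^+=\{K(u_{n-1})>0\}$.
\end{itemize}
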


\begin{proof}
    We proceed by induction on $n$.
    \textbf{Base case ($n=1$):}
    Both schemes share the same initial data, $u_0^\epsilon = u_0$. Consequently, the coefficients for the first step are identical: $K(u_0^\epsilon) = K(u_0)$.

    \textbf{Step 1: A Priori Estimates} \\
    Testing the regularized equation \eqref{regdiffconv_prop} with $v=u_1^\epsilon$ for $n=1$, we obtain:
    \begin{equation}
        \begin{aligned}
             (\theta(u^\epsilon_1),u_1^\epsilon) &+ \tau |u_1^\epsilon|_{V_1}^2 + \tau \epsilon \|\nabla u^\epsilon_1\|^2\\
             & = (\theta(u_{0}),u_1^\epsilon)+ \tau (\mathcal S(u_{0}),u_1^\epsilon)- \tau(\omega_1\bar K_1(u_{0}), \omega_1\nabla u_1^\epsilon).
        \end{aligned}
        \label{semiimplicit1testu1}
    \end{equation}
    Using Hypotheses \ref{H2}-\ref{H3}, along with the Cauchy-Schwarz and Young's inequalities, we estimate the terms as follows:
    \begin{equation}
        \begin{aligned}
             \alpha \|u_1^\epsilon\|^2 &+ \tau |u_1^\epsilon|_{V_1}^2 + \tau \epsilon \|\nabla u^\epsilon_1\|^2\\
             & \leq (\theta(u_{0}),u_1^\epsilon)+ \tau (\mathcal S(u_{0}),u_1^\epsilon)- \tau(\omega_1\bar K_1(u_{0}), \omega_1\nabla u_1^\epsilon)\\
             &\leq \qty(L_\theta \|u_0\| + \tau |\Omega|^{\frac 12} M_S) \|u_1^\epsilon\| + \tau |\Omega|^{\frac{1}{2}}M |u_1^\epsilon|_{V_1}\\
             & \leq \frac{1}{2\alpha }\qty(L_\theta \|u_0\| + \tau \Omega^{\frac 12} M_S)^2 + \frac{\alpha }{2}\|u_1^\epsilon\|^2+ \frac \tau2 |\Omega|M^2 + \frac \tau 2|u_1^\epsilon|_{V_1}^2.
        \end{aligned}
    \end{equation}
    Rearranging the terms, we find:
    \begin{equation}
        \frac{\alpha}{2} \|u_1^\epsilon\|^2 + \frac{\tau}{2} |u_1^\epsilon|_{V_1}^2 + \tau \epsilon \|\nabla u^\epsilon_1\|^2 \leq \frac{1}{2\alpha }\qty(L_\theta \|u_0\| + \tau \Omega^{\frac 12} M_S)^2+ \frac{\tau}{2} |\Omega|M^2.
    \end{equation}
    Thus, there exists a constant $C>0$, independent of $\epsilon$, such that:
    \begin{equation}
        \|u_1^\epsilon\| + \sqrt{\tau}|u_1^\epsilon|_{V_1}+ \sqrt{ \tau\epsilon} \|\nabla u_1^\epsilon\| \leq C.
    \end{equation}

    \textbf{Step 2: Weak Convergence} \\
    Due to these uniform bounds, there exists $u'_1 \in V_1$ such that (up to a subsequence):
    \begin{equation}
        \begin{aligned}
            &u^\epsilon_1 \rightharpoonup u'_1 \text{ weakly in } L^2(\Omega),\\
            & \sqrt{\tau}\omega_1 \nabla u^\epsilon_1 \rightharpoonup \sqrt\tau\omega_1 \nabla u'_1 \text{ weakly in } L^2(\Omega),\\
            & \epsilon\sqrt\tau \nabla u^\epsilon_1 \to 0 \text{ strongly in } L^2(\Omega).
        \end{aligned}
    \end{equation}
    Additionally, since $\theta$ is Lipschitz continuous, $\{\theta(u^\epsilon_1)\}$ is bounded in $L^2(\Omega)$. Therefore, there exists $w_1 \in L^2(\Omega)$ such that:
    \begin{equation}
        \theta(u^\epsilon_1) \rightharpoonup w_1 \text{ weakly in } L^2(\Omega).
    \end{equation}
    Proceeding as in the existence proof (Theorem \ref{eistenceforsemidis}), we identify the limit and show that $u'_1$ is a solution to the equation \eqref{semiimplicitmeth} for $n=1$. Since the solution to the limit problem is unique (by Theorem \ref{eistenceforsemidis}), we conclude that the entire sequence converges weakly: $u^\epsilon_1 \rightharpoonup u_1$ in $L^2(\Omega)$.

    \textbf{Step 3: Strong Convergence} \\
    It remains to show that $u^\epsilon_1 \to u_1$ strongly in $L^2(\Omega)$. By Lemma \ref{Lemmabres}, we have:
    \begin{equation}
        \lim_{\epsilon \to 0} (\theta(u^\epsilon_1),u^\epsilon_1) = (\theta(u_1),u_1).
    \end{equation}
    By the monotonicity of $\theta$, we consider:
    \begin{equation}
        0\leq \int_\Omega (\theta(u^\epsilon_1)-\theta(u_1))(u^\epsilon_1-u_1)\; d\textbf{x}.
    \end{equation}
    Expanding this product and using the weak convergence results, we obtain:
    \begin{equation}
        \int_\Omega (\theta(u^\epsilon_1)-\theta(u_1))(u^\epsilon_1-u_1)\; d\textbf{x} = (\theta(u^\epsilon_1),u^\epsilon_1) - (\theta(u_1),u^\epsilon_1)-(\theta(u^\epsilon_1),u_1) + (\theta(u_1),u_1) \underset{\epsilon \to 0 }{\to} 0.
    \end{equation}
    Finally, using the fact that $\theta$ is Lipschitz, we have:
    \begin{equation}
        \frac{1}{L_\theta} \|\theta(u^\epsilon_1)-\theta(u_1)\|^2 \leq \int_\Omega (\theta(u^\epsilon_1)-\theta(u_1))(u^\epsilon_1-u_1)\; d\textbf{x} \to 0.
    \end{equation}
    Combining this with Hypothesis \ref{H1}, we conclude that $u^\epsilon_1 \to u_1$ strongly in $L^2(\Omega)$.
    
    \textbf{Induction step:} Assume that the result holds for the previous step, i.e., $\|u^\epsilon_{n-1} - u_{n-1}\|_{L^2(\Omega)} \to 0$. We aim to show that $\|u^\epsilon_{n} - u_{n}\|_{L^2(\Omega)} \to 0$.

Proceeding as in the base case, we derive the following uniform estimates. There exists a constant $C>0$ such that for all $\epsilon>0$:
\begin{equation}
    \|u^\epsilon_n\| + \sqrt\tau\|\sqrt{K(u_{n-1}^\epsilon)} \nabla u^\epsilon_n\| + \sqrt {\tau\epsilon} \|\nabla u^\epsilon_n\| \leq C.
\end{equation}
From these estimates, we deduce the existence of $u'_n \in L^2(\Omega)$ and $\bar Z \in (L^2(\Omega))^d$ such that (up to a subsequence):
\begin{equation}
    \begin{aligned}
        & u^\epsilon_n \rightharpoonup u'_n \text{ weakly in } L^2(\Omega),\\
        & \theta(u^\epsilon_n) \rightharpoonup w_n \text{ weakly in } L^2(\Omega),\\
        & \sqrt\tau\sqrt{K(u_{n-1}^\epsilon)} \nabla u^\epsilon_n \rightharpoonup \bar Z \text{ weakly in } (L^2(\Omega))^d,\\
        & \epsilon\sqrt\tau \nabla u^\epsilon_n \to 0 \text{ strongly in } L^2(\Omega).
    \end{aligned}
\end{equation}
Using the induction hypothesis, along with the continuity and boundedness of $K$, we have (up to a subsequence):
\begin{equation}
    K(u^\epsilon_{n-1}) \to K(u_{n-1}) \text{ strongly in } L^2(\Omega) \text{ and a.e.}
\end{equation}

We now need to identify the limit flux as $\bar Z = \sqrt \tau \sqrt{K(u_{n-1})}\nabla u'_n$ on the set where the weight is not zero.
Let $m \in \mathbb N$. Consider the sets $\Omega^m = \{\textbf{x} \in \Omega : K(u_{n-1}(\textbf{x})) \geq \frac 1m \}$, $\Omega^+ = \bigcup_{m\geq 1} \Omega^m$, and $\Omega_0 = \{\textbf{x} \in \Omega : K(u_{n-1}(\textbf{x})) = 0\} $. Note that $\Omega$ is the disjoint union of $\Omega_0$ and $\Omega^+$.

On the degenerate set $\Omega_0$, since $K(u_{n-1}) = 0$, the strong convergence implies $K(u^\epsilon_{n-1}) \to 0$ almost everywhere, which yields $\sqrt{K(u^\epsilon_{n-1})} \to 0$ in $L^2(\Omega_0)$.
For any test function $v \in C_c^\infty(\Omega)$, we estimate the diffusion term on $\Omega_0$:
\begin{equation}
    \begin{aligned}
        \left|\int_{\Omega_0} K(u^\epsilon_{n-1})\nabla u^\epsilon_n \cdot \nabla v \;d\textbf{x} \right| &= \left|\int_{\Omega_0} \sqrt{K(u^\epsilon_{n-1})}\nabla u^\epsilon_n \cdot \sqrt{K(u^\epsilon_{n-1})}\nabla v \;d\textbf{x}\right| \\
        & \leq \|\sqrt{K(u^\epsilon_{n-1})}\nabla u^\epsilon_n\|_{L^2(\Omega)} \|\nabla v\|_\infty  \|\sqrt{K(u^\epsilon_{n-1})}\|_{L^2(\Omega_0)} \to 0.
    \end{aligned}
\end{equation}

On the active sets $\Omega^m$ (fixed $m \geq 1$), we apply Egoroff's Theorem \cite{Royden1988}. For a fixed $\delta >0$, there exists a measurable subset $A_\delta \subset \Omega^m$ such that $|\Omega^m \setminus A_{\delta}| < \delta$ and $K(u^\epsilon_{n-1}) \to K(u_{n-1})$ uniformly on $A_\delta$.
Thus, there exists $\epsilon_0>0$ such that for all $\epsilon < \epsilon_0$, we have $K(u^\epsilon_{n-1}) \geq \frac{1}{2m}$ on $A_\delta$.
Consequently, the a priori estimate implies:
\begin{equation}
    \tau\| \nabla u^\epsilon_n \|^2 _{L^2(A_\delta)} \leq 2mC.
\end{equation}
This uniform bound on the gradient allows us to identify the weak limit on $A_\delta$. Since $u^\epsilon_n \rightharpoonup u'_n$ in $L^2(\Omega)$, we have:
\begin{equation}
    \nabla u^\epsilon_n \rightharpoonup \nabla u'_n \text{ weakly in } L^2(A_\delta).
\end{equation}
Combining the uniform convergence of the coefficient and the weak convergence of the gradient, we obtain for any $\delta >0$:
\begin{equation}
    \int_{A_{\delta}} K(u_{n-1}^\epsilon) \nabla u^\epsilon_n \cdot \nabla v\;d\textbf{x} \to \int_{A_{\delta}} K(u_{n-1}) \nabla u'_n \cdot \nabla v\;d\textbf{x}.
\end{equation}
Since $\delta$ is arbitrary, this convergence holds on $\Omega^m$:
\begin{equation}
    \int_{\Omega^m} K(u_{n-1}^\epsilon) \nabla u^\epsilon_n \cdot \nabla v\;d\textbf{x} \to \int_{\Omega^m} K(u_{n-1}) \nabla u'_n \cdot \nabla v\;d\textbf{x}.
\end{equation}
By exhausting $\Omega^+$ with the sequence $\Omega^m$, we conclude:
\begin{equation}
    \int_{\Omega^+} K(u_{n-1}^\epsilon) \nabla u^\epsilon_n \cdot \nabla v\;d\textbf{x} \to \int_{\Omega^+} K(u_{n-1}) \nabla u'_n \cdot \nabla v\;d\textbf{x},
\end{equation}
for all $v \in C_c^\infty(\Omega)$.
Applying a similar argument to $\sqrt{K}$ instead of $K$, we can identify $\bar Z = \sqrt \tau \sqrt{K(u_{n-1})}\nabla u'_n$ on $L^2(\Omega^+)$.

We now pass to the limit in the regularized equation \eqref{regdiffconv_prop} with a test function $v \in C^\infty_c(\Omega)$. Using the continuity and boundedness of the source term $\mathcal S$ and convection coefficient $\bar K$, the Lipschitz continuity of $\theta$, and the induction hypothesis, we obtain:
\begin{equation}
    \begin{aligned}
         (w_n- \theta(u_{n-1}),v) &+ \tau ((K(u_{n-1}))\nabla u'_n,\nabla v) \\
         &+ \tau(\bar K(u_{n-1}), \nabla v) = \tau (\mathcal S(u_{n-1}),v),\; \forall v \in C^\infty_c(\Omega).
    \end{aligned}
\end{equation}
Using the density of $C^\infty_c(\Omega)$ in the weighted space $V_n$, this variational equality extends to all test functions in $V_n$:
\begin{equation}
    \begin{aligned}
         (w_n- \theta(u_{n-1}),v) &+ \tau ((K(u_{n-1}))\nabla u'_n,\nabla v) \\
         &+ \tau(\bar K(u_{n-1}), \nabla v) = \tau (\mathcal S(u_{n-1}),v),\; \forall v \in V_n.
    \end{aligned}
\end{equation}
Finally, using Lemma \ref{Lemmabres}, we identify $w_n = \theta(u_n')$. By the uniqueness of the solution to the limit problem, we conclude $u_n = u'_n$.
Proceeding as in the base case, a further application of Lemma \ref{Lemmabres} proves the strong convergence:
\begin{equation}
    \|u_n^\epsilon - u_n \|_{L^2(\Omega)} \to 0.
\end{equation}
In all of the proof, all the constants above are independ of $\tau$, thus we can conclude that 
\begin{equation}
    \|u^{\tau,\epsilon} - u^\tau \| \to 0,
\end{equation}
as $\epsilon \to 0$, uniformly with respect to $\tau$. This holds because to prove the strong convergence with Lemma \ref{Lemmabres}, we only need to verify the inequality $\limsup_{\epsilon \to 0} (\theta(u^\epsilon_n),u^\epsilon_n) \leq (\theta(u_n),u_n)$. Since the regularization term $\tau \epsilon \|\nabla u^\epsilon_n\|^2$ is non-negative, it can be discarded from the energy equality, yielding an upper bound that depends solely on terms controlled by $\sqrt{\tau}\sqrt{K(u_{n-1}^\epsilon)}\nabla u_n^\epsilon$. Since $\sqrt{\tau}\|\sqrt{K(u_{n-1}^\epsilon)}\nabla u_n^\epsilon\|$ is bounded by a constant independent of $\tau$ (by the a priori estimates), the convergence argument holds uniformly for any $0<\tau\leq T$.
\end{proof}

\subsection{Convergence of the semi-implicit method in $L^2(\Omega)$}
We now demonstrate that the solution to the semi-implicit method converges to the continuous solution as the time step tends to zero.

\begin{proposition}
    The solution to equation \eqref{semidiscfull}, denoted by $u^\tau$, converges to the solution of \eqref{mainequation}, denoted by $u$, in $L^2(\Omega \times I)$ as $\tau \to 0$.
\end{proposition}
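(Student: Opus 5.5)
The plan is a three-$\varepsilon$ (triangle inequality) argument through the regularized problems:
\[
\|u^\tau - u\|_{L^2(\Omega\times I)} \le \|u^\tau - u^{\tau,\epsilon}\|_{L^2(\Omega\times I)} + \|u^{\tau,\epsilon} - u^\epsilon\|_{L^2(\Omega\times I)} + \|u^\epsilon - u\|_{L^2(\Omega\times I)} .
\]
Here $u^{\tau,\epsilon}$ is the piecewise constant Rothe interpolant of the regularized scheme \eqref{regdiffconv_prop}. The function $u^\epsilon$ is the solution of the regularized continuous problem \eqref{continuicereg} from Theorem \ref{regsolconvintime}, and $u$ is the weak solution of Theorem \ref{theorem1}.

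\textbf{First term.} It is controlled by Proposition \ref{convoftheregsoltodegsol_disc}, which gives convergence to zero as $\epsilon\to 0$ uniformly in $0<\tau\le T$. Given $\eta>0$, I first fix $\epsilon_0$ so that this term is below $\eta/3$ for all $\tau$ and all $\epsilon\le\epsilon_0$.

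\textbf{Third term.} The proof of Theorem \ref{theorem1} already shows $u^\epsilon\to u$ strongly in $L^2(I;L^2(\Omega))$ along a subsequence. This uses strong convergence of $\theta(u^\epsilon)$ from the Moussa compactness result, identification via Lemma \ref{Lemmabres}, and the inverse-H\"older estimate in \ref{H1}. So I shrink $\epsilon$ within that subsequence until this term is also below $\eta/3$.

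\textbf{Middle term.} With $\epsilon$ now fixed, I use the proof of Theorem \ref{regsolconvintime}. It gives $\theta(u^{\tau,\epsilon})\to\theta(u^\epsilon)$ strongly in $L^2(\Omega\times I)$ as $\tau\to0$. Applying \ref{H1} pointwise, squaring, and using H\"older with exponents $1/\delta$ and $1/(1-\delta)$, exactly as in the L-scheme section, yields
\[
\|u^{\tau,\epsilon}-u^\epsilon\|^2_{L^2(\Omega\times I)} \le 2H_\theta^2\Big(\|\theta(u^{\tau,\epsilon})-\theta(u^\epsilon)\|^2 + |\Omega\times I|^{1-\delta}\|\theta(u^{\tau,\epsilon})-\theta(u^\epsilon)\|^{2\delta}\Big).
\]
This tends to zero, so for $\tau$ small the middle term is below $\eta/3$.

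\textbf{Main obstacle: uniqueness.} The compactness steps only give subsequences, in both $\tau$ and $\epsilon$. To upgrade to convergence of the whole family $u^\tau\to u$, I need uniqueness of the limits:
\begin{itemize}
\item the regularized continuous problem, for fixed $\epsilon>0$;
\item the degenerate problem \eqref{mainequation}.
\end{itemize}
The paper has not established either. I would first try to prove uniqueness for $\epsilon>0$ with the $L^1$-contraction/doubling technique of Otto and Carrillo. This is plausible because $K+\epsilon$ is nondegenerate and $\bar K$, $\mathcal S$ are continuous, though a Lipschitz-in-$u$ assumption on $\bar K$ and $\mathcal S$ may be needed. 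For the limit problem I would likewise invoke Carrillo's entropy uniqueness.

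If uniqueness cannot be secured, I would state the result in its honest form. Every sequence $\tau_k\to0$ has a subsequence along which $u^{\tau_k}$ converges in $L^2(\Omega\times I)$ to a weak solution of \eqref{mainequation}. This statement follows directly from the argument above with a diagonal extraction over $\epsilon$ and $\tau$.

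A secondary check is that the subsequence in $\epsilon$ chosen for the third term is compatible with the $\epsilon$-uniform estimate of the first term. This holds because Proposition \ref{convoftheregsoltodegsol_disc} is uniform in $\tau$ and does not depend on subsequences, since the semi-discrete problems are uniquely solvable by Theorem \ref{eistenceforsemidis}.
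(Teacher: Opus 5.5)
Your splitting matches the paper's proof exactly. It uses the same three terms and the same order of quantifiers: first $\epsilon_0$, from the $\tau$-uniform Proposition~\ref{convoftheregsoltodegsol_disc} together with Theorem~\ref{theorem1}, then $\tau_0$ from Theorem~\ref{regsolconvintime}. Your \ref{H1}--H\"older bound for the middle term is the step the paper uses to upgrade $\theta(u^{\tau,\epsilon})\to\theta(u^\epsilon)$ to strong convergence of $u^{\tau,\epsilon}$.

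The obstacle you flag is real, and the paper does not address it. Its proof writes ``there exists $\tau_0>0$ such that for all $0<\tau<\tau_0$'' and $\|u^{\epsilon_0}-u\|\le\delta/3$ as though Theorems~\ref{regsolconvintime} and~\ref{theorem1} gave convergence of the whole families. Both are in fact proved by extraction (Aubin--Lions, Moussa, weak compactness). Uniqueness is proved only for the semi-discrete problems (Theorem~\ref{eistenceforsemidis}), never for the $\epsilon$-regularized continuous problem or for \eqref{mainequation}. So the common argument proves your subsequential statement. Your diagonal extraction makes it rigorous: first over $\tau$ for a countable family $\epsilon_j\to0$, then over $j$ using the bounds of Proposition~\ref{propositionuniformbounds}, which hold for every regularized solution.

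\textbf{First caveat: your plan to prove uniqueness cannot work under the stated hypotheses.} The extra Lipschitz-type assumption you anticipate is genuinely necessary, because \ref{H4} allows $\mathcal S$ to be merely continuous in $u$. Take $\theta(u)=u$, $K\equiv1$, $\bar K\equiv0$, $u_0=0$, $T\le2$ and $\mathcal S(u)=\min(\sqrt{|u|},1)$. Then both $u\equiv0$ and a positive function are weak solutions. The positive one comes from monotone iteration between a subsolution and the supersolution $t^2/4$. The subsolution is $g(t)\varphi_1$, where $\varphi_1$ is the first Dirichlet eigenfunction with $\max\varphi_1=1$, and $g$ solves $g'=\sqrt g-\lambda_1 g$, $g(0)=0$, $g\not\equiv0$. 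So ``the solution'' in the statement is not well defined without more structure. In addition, Carrillo's uniqueness theorem concerns entropy solutions, so you would also have to carry entropy inequalities through both limits.

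\textbf{Second caveat: your first term depends on an unproved claim.} Like the paper's, it rests on the $\tau$-uniformity stated at the end of Proposition~\ref{convoftheregsoltodegsol_disc}. The paper justifies this only by noting that the constants are independent of $\tau$. Convergence obtained by compactness at each of the $N=T/\tau$ steps comes with no rate, so uniformity over all $\tau$ does not follow. Even your subsequential version is therefore conditional on that claim.
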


\begin{proof}
    The proof relies on the results established in Theorems \ref{theorem1} and \ref{regsolconvintime}, and Proposition \ref{convoftheregsoltodegsol_disc}.

    Let $\delta > 0$ be an arbitrary tolerance.
    First, we control the regularization errors. By Proposition \ref{convoftheregsoltodegsol_disc} (which guarantees convergence uniform in $\tau$) and Theorem \ref{theorem1}, there exists an $\epsilon_0 > 0$ such that:
    \begin{equation}
        \|u^{\tau,\epsilon_0} - u^\tau\| \leq \frac{\delta}{3} \quad \text{for all } \tau > 0,
    \end{equation}
    and
    \begin{equation}
        \|u^{\epsilon_0} - u\| \leq \frac{\delta}{3}.
    \end{equation}
    Next, we control the time discretization error for this fixed $\epsilon_0$. By Theorem \ref{regsolconvintime}, there exists a $\tau_0 > 0$ such that for all $0 < \tau < \tau_0$:
    \begin{equation}
        \| u^{\tau, \epsilon_0} - u^{\epsilon_0}\| \leq \frac{\delta}{3}.
    \end{equation}
    Finally, applying the triangle inequality, we obtain for all $0 < \tau < \tau_0$:
    \begin{equation}
        \|u - u^\tau\| \leq \|u - u^{\epsilon_0}\| + \|u^{\epsilon_0} - u^{\tau,\epsilon_0}\| + \|u^{\tau,\epsilon_0}-u^\tau\| \leq \frac{\delta}{3} + \frac{\delta}{3} + \frac{\delta}{3} = \delta.
    \end{equation}
    Since $\delta$ was arbitrary, this proves the convergence of the semi-implicit solution $u^\tau$ to the continuous solution $u$ in $L^2(\Omega \times I)$.
\end{proof}

\section*{Acknowledgments}

This work was supported by an NSERC, Canada Discovery Grant (RGPIN-2019-06855) to Yves Bourgault 
and an NSERC, Canada Discovery Grant (RGPIN/5220-2022 \& DGECR/526-2022) to Abdelaziz Beljadid.

% We will prove a order of convergence for the regularized $K_\epsilon$, and we will drop the index $\epsilon$, and assuming the $\mathcal S=0$ for now. 
% Since $u$ is the weak solution of \eqref{uform}, it satisfies
% \begin{equation}
%     \bigg\langle \frac{\partial \theta(u(t))}{\partial t}, v\bigg\rangle + (\nabla \Phi(u(t)), \nabla v) + \bigg(\bar K(u(t)), \nabla v \bigg) = 0, \quad \forall v \in \mathcal{H}, \text{ a.e. } t \in I.
% \end{equation}
% Integrating over the time interval $(t_{n-1}, t_n)$, we obtain:
% \begin{equation}
%     (\theta(u(t_n)) - \theta(u(t_{n-1})), v) + \int_{t_{n-1}}^{t_n} (K(u(s))\nabla u(s), \nabla v) \, ds + \int_{t_{n-1}}^{t_n} \bigg(\bar K(u(s)), \nabla v \bigg) \, ds = 0, \quad \forall v \in \mathcal{H}.
% \end{equation}
% Furthermore, the semi-discrete equation \eqref{semidiscfull} admits a unique solution in $\mathcal H$ satisfying
% \begin{equation}
%     (\theta(u_n), v) - (\theta(u_{n-1}), v) + \tau (K(u_{n-1})\nabla u_n, \nabla v) + \tau\bigg(\bar K(u_{n-1}), \nabla v \bigg) = 0, \quad \forall v \in \mathcal H.
% \end{equation}
% Taking the difference between the equations we get:
% \begin{equation}
%     \begin{aligned}
%         &(\theta(u_n) -\theta(u(t_n)) -(\theta(u_{n-1}) - \theta(u(t_{n-1})),v) + \int_{t_{n-1}}^{t_n} (K(u_{n-1})\nabla u_n-K(u(s))\nabla u(s), \nabla v) \, ds\\
%         & \int_{t_{n-1}}^{t_n} \bigg(\bar K(u_{n-1})-\bar K(u(s)), \nabla v \bigg) \, ds
%     \end{aligned}
% \end{equation}

% \cite{Moussa2016}

\biboptions{sort&compress} 
\bibliographystyle{elsarticle-num}
\bibliography{references}

@article{Kamil2024,
title = {Semi-implicit schemes for modeling water flow and solute transport in unsaturated soils},
journal = {Advances in Water Resources},
volume = {193},
pages = {104835},
year = {2024},
issn = {0309-1708},
author = {Hamza Kamil and Abdelaziz Beljadid and Azzeddine Soulaïmani and Yves Bourgault}
}

@article{Paniconi1991,
  title = {Numerical evaluation of iterative and noniterative methods for the solution of the nonlinear {Richards} equation},
  author = {Paniconi, Claudio and Putti, Mario and Pinder, George F.},
  journal = {Water Resources Research},
  volume = {27},
  number = {6},
  pages = {1147--1163},
  year = {1991},
  publisher = {Wiley Online Library}
}

@article{Keita2021,
  title   = {Implicit and semi-implicit second-order time stepping methods for the Richards equation},
  journal = {Advances in Water Resources},
  volume  = {148},
  pages   = {103841},
  year    = {2021},
  issn    = {0309-1708},
  author  = {Sana Keita and Abdelaziz Beljadid and Yves Bourgault}
}

@article{Jones2001,
  author = {Jones, J. E. and Woodward, C. S.},
  title = {Newton-Krylov-multigrid solvers for large-scale, highly heterogeneous, variably saturated flow problems},
  journal = {Advances in Water Resources},
  volume = {24},
  number = {7},
  pages = {763--774},
  year = {2001}
}

@article{Slodicka2002,
  author = {Slodi{\v{c}}ka, Mari{\'a}n},
  title = {A robust and efficient linearization scheme for doubly nonlinear and degenerate parabolic problems arising in flow in porous media},
  journal = {SIAM Journal on Scientific Computing},
  volume = {23},
  number = {5},
  pages = {1593--1614},
  year = {2002}
}

@inProceedings{Rektorys1982,
author = {Rektorys, Karel},
booktitle = {Equadiff 5},
location = {Leipzig},
pages = {293-296},
publisher = {BSB B.G. Teubner Verlagsgesellschaft},
title = {The method of discretization in time and partial differential equations},
year = {1982},
}

@InProceedings{Kacur1990,
author="Ka{\v{c}}ur, J.",
editor="Vosmansk{\'y}, Jarom{\'i}r
and Zl{\'a}mal, Milo{\v{s}}",
title="Method of rothe in evolution equations",
booktitle="Equadiff 6",
year="1986",
publisher="Springer Berlin Heidelberg",
address="Berlin, Heidelberg",
pages="23--34",
isbn="978-3-540-39807-3"
}

@article{ListRadu2016,
  author = {List, Florian and Radu, Florin A.},
  title = {A study on iterative methods for solving {Richards}' equation},
  journal = {Computational Geosciences},
  volume = {20},
  pages = {341--353},
  year = {2016}
}

@article{AltLuckhaus1983,
  author  = {H. W. Alt and S. Luckhaus},
  title   = {Quasilinear elliptic-parabolic differential equations},
  journal = {Math. Z.},
  volume  = {183},
  number  = {3},
  pages   = {311--341},
  year    = {1983}
}

@book{LionsMagenes1972,
  author    = {J. L. Lions and E. Magenes},
  title     = {Non-Homogeneous Boundary Value Problems and Applications},
  volume    = {I},
  translator = {P. Kenneth},
  series    = {Die Grundlehren der mathematischen Wissenschaften},
  number    = {181},
  publisher = {Springer-Verlag Berlin Heidelberg New York},
  year      = {1972},
  isbn      = {978-3-642-65161-8}
}

@Book{Ern2021,
  title     = {Finite Elements III: First-Order and Time-Dependent PDEs},
  author    = {Ern, Alexandre and Guermond, Jean-Luc},
  year      = {2021},
  publisher = {Springer International Publishing},
  address   = {Cham},
  isbn      = {978-3-030-57348-5}
}

@article{Carrillo1999,
  title={Entropy solutions for nonlinear degenerate problems},
  author={Carrillo, Jos{\'e}},
  journal={Archive for Rational Mechanics and Analysis},
  volume={147},
  number={4},
  pages={269--361},
  year={1999},
  publisher={Springer}
}

@misc{benfanich2025,
      title={A finite element method using a bounded auxiliary variable for solving the Richards equation}, 
      author={Abderrahmane Benfanich and Yves Bourgault and Abdelaziz Beljadid},
      year={2025},
      eprint={2510.13012},
      archivePrefix={arXiv},
      primaryClass={math.NA},
      url={https://arxiv.org/abs/2510.13012}, 
}

@article{Moussa2016,
  author    = {Ayman Moussa},
  title = {{Some variants of the classical Aubin--Lions Lemma}},
  journal   = {Journal of Evolution Equations},
  year      = {2016},
  volume    = {16},
  number    = {1},
  pages     = {65--93},
  issn      = {1424-3202}
}

@article{Aubin1963,
  author    = {Jean-Pierre Aubin},
  title     = {Un th{\'e}or{\`e}me de compacit{\'e}},
  journal   = {Comptes Rendus de l'Acad{\'e}mie des Sciences de Paris},
  volume    = {256},
  year      = {1963},
  pages     = {5042--5044},
  language  = {French},
  mrnumber  = {0152860}
}

@book{Royden1988,
  author    = {H. L. Royden},
  title     = {Real Analysis},
  year      = {1988},
  publisher = {Prentice Hall},
  isbn      = {9780024041517}
}

@Book{Brezis2011,
  title     = {Functional Analysis, Sobolev Spaces and Partial Differential Equations},
  author    = {Brezis, Haim},
  year      = {2011},
  publisher = {Springer New York},
  address   = {New York, NY},
  isbn      = {978-0-387-70913-0}
}

@book{brezis1973ope,
  title={Ope¦rateurs maximaux monotones et semi-groupes de contractions dans les espaces de Hilbert},
  author={Brezis, H.},
  isbn={9780080871165},
  series={North-Holland Mathematics Studies},
  year={1973},
  publisher={North Holland}
}

@Book{Agarwal2018,
  title     = {Fixed Point Theory in Metric Spaces: Recent Advances and Applications},
  author    = {Agarwal, Praveen and Jleli, Mohamed and Samet, Bessem},
  year      = {2018},
  publisher = {Springer Singapore},
  address   = {Singapore},
    isbn      = {978-981-13-2912-8},
  edition   = {1}
}

@inbook{LaxMilgram+1955+167+190,
title = {IX. Parabolic Equations},
booktitle = {Contributions to the Theory of Partial Differential Equations},
author = {P. D. Lax and A. N. Milgram},
publisher = {Princeton University Press},
address = {Princeton},
pages = {167--190},
isbn = {9781400882182},
year = {1955},
lastchecked = {2025-10-22}
}

@article{Pop2004,
title = {Mixed finite elements for the {Richards}’ equation: linearization procedure},
journal = {Journal of Computational and Applied Mathematics},
volume = {168},
number = {1},
pages = {365-373},
year = {2004},
note = {Selected Papers from the Second International Conference on Advanced Computational Methods in Engineering (ACOMEN 2002)},
issn = {0377-0427},
author = {I.S. Pop and F. Radu and P. Knabner}
}

@article{Cavalheiro2008WeightedSobolev,
  author    = {Albo Carlos Cavalheiro},
  title     = {Weighted Sobolev Spaces and Degenerate Elliptic Equations},
  journal   = {Boletim da Sociedade Paranaense de Matemática (3s.)},
  volume    = {26},
  number    = {1-2},
  pages     = {117--132},
  year      = {2008},
  issn      = {0037-8712},
  publisher = {Sociedade Paranaense de Matemática},
}

@article{Brunner2012,
  author  = {P. Brunner and C. T. Simmons},
  title   = {{HydroGeoSphere}: a fully integrated, physically based hydrological model},
  journal = {Ground Water},
  volume  = {50},
  number  = {2},
  pages   = {170--176},
  year    = {2012}
}

@article{Celia1990,
  author  = {M. A. Celia and E. T. Bouloutas and R. L. Zarba},
  title   = {A general mass-conservative numerical solution for the unsaturated flow equation},
  journal = {Water Resour. Res.},
  volume  = {26},
  number  = {7},
  pages   = {1483--1496},
  year    = {1990}
}

@article{Diersch1999,
  author  = {H. J. G. Diersch and P. Perrochet},
  title   = {On the primary variable switching technique for simulating unsaturated-saturated flows},
  journal = {Adv. Water Resour.},
  volume  = {23},
  number  = {3},
  pages   = {271--301},
  year    = {1999}
}

@book{DiBenedetto1993,
  title={Degenerate Parabolic Equations},
  author={DiBenedetto, Emmanuele},
  series={Universitext},
  year={1993},
  publisher={Springer New York},
  address={New York, NY},
  isbn={978-0-387-94020-5}
}

@article{Otto1996,
title = {L1-Contraction and Uniqueness for Quasilinear Elliptic–Parabolic Equations},
journal = {Journal of Differential Equations},
volume = {131},
number = {1},
pages = {20-38},
year = {1996},
issn = {0022-0396},
author = {Felix Otto}
}

@article{Fevotte2024,
  author  = {F. F{\'e}votte and A. Rappaport and M. Vohral{\'\i}k},
  title   = {Adaptive regularization for the {R}ichards equation},
  journal = {Comput. Geosci.},
  volume  = {28},
  pages   = {1371--1388},
  year    = {2024}
}

@article{Forsyth1995,
  author  = {P. Forsyth and Y. Wu and K. Pruess},
  title   = {Robust numerical methods for saturated-unsaturated flow with dry initial conditions in heterogeneous media},
  journal = {Adv. Water Resour.},
  volume  = {18},
  number  = {1},
  pages   = {25--38},
  year    = {1995}
}

@article{Maina2017,
  author  = {F. H. Maina and P. Ackerer},
  title   = {Ross scheme, {Newton--Raphson} iterative methods and time-stepping strategies for solving the mixed form of {Richards'} equation},
  journal = {Hydrol. Earth Syst. Sci.},
  volume  = {21},
  number  = {6},
  pages   = {2667--2683},
  year    = {2017}
}

@article{Kees2002,
  author  = {C. E. Kees and C. T. Miller},
  title   = {Higher order time integration methods for two-phase flow},
  journal = {Adv. Water Resour.},
  volume  = {25},
  number  = {2},
  pages   = {159--177},
  year    = {2002}
}

@article{Krabbenhoft2007,
  author  = {K. Krabbenhøft},
  title   = {An alternative to primary variable switching in saturated-unsaturated flow computations},
  journal = {Adv. Water Resour.},
  volume  = {30},
  number  = {3},
  pages   = {483--492},
  year    = {2007}
}

@article{Mitra2019,
  author  = {K. Mitra and I. S. Pop},
  title   = {A modified {L-scheme} to solve nonlinear diffusion problems},
  journal = {Comput. Math. Appl.},
  volume  = {77},
  number  = {6},
  pages   = {1722--1738},
  year    = {2019}
}

@article{mualem1976,
  author  = {Y. Mualem},
  title   = {A new model for predicting the hydraulic conductivity of unsaturated porous media},
  journal = {Water Resour. Res.},
  volume  = {12},
  number  = {3},
  pages   = {513--522},
  year    = {1976}
}

@article{Pop2011,
  author  = {I. S. Pop and B. Schweizer},
  title   = {Regularization schemes for degenerate {Richards} equations and outflow conditions},
  journal = {Math. Models Methods Appl. Sci.},
  volume  = {21},
  number  = {8},
  pages   = {1685--1712},
  year    = {2011}
}

@article{Richards1931,
  author  = {L. A. Richards},
  title   = {Capillary conduction of liquids through porous mediums},
  journal = {Physics},
  volume  = {1},
  number  = {5},
  pages   = {318--333},
  year    = {1931}
}

@article{Schweizer2007,
  author  = {B. Schweizer},
  title   = {Regularization of outflow problems in unsaturated porous media with dry regions},
  journal = {J. Differential Equations},
  volume  = {237},
  number  = {2},
  pages   = {278--306},
  year    = {2007}
}

@article{VanGenuchten1980,
  author  = {M. T. van Genuchten},
  title   = {A closed-form equation for predicting the hydraulic conductivity of unsaturated soils},
  journal = {Soil Sci. Soc. Am. J.},
  volume  = {44},
  number  = {5},
  pages   = {892--898},
  year    = {1980}
}

@article{Wu2001,
  author  = {Y. S. Wu and P. A. Forsyth},
  title   = {On the selection of primary variables in numerical formulation for modeling multiphase flow in porous media},
  journal = {J. Contam. Hydrol.},
  volume  = {48},
  number  = {3--4},
  pages   = {277--304},
  year    = {2001}
}

@article{Zha2017,
  author  = {Y. Zha and J. Yang and L. Yin and Y. Zhang and W. Zeng and L. Shi},
  title   = {A modified {Picard} iteration scheme for overcoming numerical difficulties of simulating infiltration into dry soil},
  journal = {J. Hydrol.},
  volume  = {551},
  pages   = {56--69},
  year    = {2017}
}

@article{Zha2019,
  author  = {Y. Zha and J. Yang and J. Zeng and C.-H. M. Tso and W. Zeng and L. Shi},
  title   = {Review of numerical solution of {Richardson--Richards} equation for variably saturated flow in soils},
  journal = {WIREs Water},
  volume  = {6},
  pages   = {e1364},
  year    = {2019}
}
\end{document}